\documentclass[11pt,twoside]{article}%
\usepackage{amssymb}
\usepackage{amsmath}
\usepackage{amsfonts}
\usepackage{mathtools}
\usepackage[title,titletoc,header]{appendix}
\usepackage{esvect}
\usepackage{geometry}
\usepackage{graphicx}
\usepackage{indentfirst}
\usepackage{mathrsfs}
\usepackage{nopageno}
\usepackage{setspace}
\usepackage{remreset}
\usepackage[nottoc]{tocbibind}
\usepackage{hyperref}
\usepackage{mathtools, amssymb}
\newtheorem{theorem}{Theorem}[section]

\newtheorem{axiom}[theorem]{Axiom}

\newtheorem{conclusion}[theorem]{Conclusion}

\newtheorem{corollary}[theorem]{Corollary}

\newtheorem{definition}[theorem]{Definition}
\newtheorem{example}[theorem]{Example}

\newtheorem{lemma}[theorem]{Lemma}

\newtheorem{problem}[theorem]{Problem}
\newtheorem{proposition}[theorem]{Proposition}
\newtheorem{remark}[theorem]{Remark}

\newenvironment{proof}[1][Proof]{\noindent\textbf{#1.}
	\setlength{\parindent}{1ex} }{\hfill \rule{0.4em}{0.4em}}
\newcommand{\sbin}{\mathrel{\widetilde{\in}}}
\renewcommand{\labelenumi}{(\roman{enumi})}

\makeatletter
\@removefromreset{figure}{section}
\makeatother
\makeatletter
\def\blfootnote{\xdef\@thefnmark{}\@footnotetext}
\makeatother
\counterwithin{equation}{section}

\begin{document}
	
\title{Predicate and Set Bundles in Multi-valued Logic}
\author{Eugene Zhang}
\maketitle

\begin{abstract}\medskip
	In this paper, a new model for multi-valued logic is presented based on the notion of a bundle of predicates. The degree of truth and various logical operations for predicates in our model of multi-valued logic are rigorously defined and investigated. Furthermore, the bundle of sets as a special type of predicate bundles is thoroughly investigated to provide a rigorous model for fuzzy sets and certain adjectives/adverbs in linguistics. In addition, solutions to necessity/possibility and the sorites paradox in modal logic are given. \medskip

	\textit{Key Word}: Multi-valued logic; Predicate bundle; Set bundle; Weight measure; Degree of truth; Fuzzy set; Linguistics; Modal logic; Sorites paradox.
	\blfootnote{\textit{2020 Mathematics Subject Classification}: 03B45; 03B50; 03B65}
\end{abstract}

\tableofcontents
\listoffigures

\section{Introduction}\label{SectionIntroduction}

Logic is a branch of mathematics and philosophy that employs a (formal) language and deductive system to model human thoughts. Classical logic was initially developed by Boole and Frege in the 19th century as an expansion of Aristotelian logic (\cite{Boole} and \cite{Frege}) in which only two truth values are allowed\ for each proposition. However, sufficient evidence suggests that human thoughts can not be modeled purely by classical logic, notably reflected by the fact that linguistic expressions involving adjectives and adverbs generally can not be described by classical logic and sets. For example, \textquotedblleft young man\textquotedblright\ can not be represented by two truth values and a classical set because there is no single clear-cut value in age for \textit{young}. More specifically, despite the fact that a man with an age under 20 is certainly considered as young, age 22 or 23 or whatsoever, can also be regarded as young. The same can be applied to \textquotedblleft very young man\textquotedblright\ too because there are no values to define the adverb \textit{very} in age. Consequently, classical logic is inadequate to describe these linguistic objects and multi-valued logic is called upon.\smallskip

Multi-valued logic started with the work of Lukasiewicz who created a three-valued logic (\textit{true}, \textit{false} and \textit{possible}) in 1920 and later developed into an infinite valued variant with given rules for negation and implication \cite{Gottwald}. Later on, Post and Godel each proposed an infinite valued logics in which rules for conjunction and disjunction are given (\cite{Gottwald} and \cite{Post}). In 1958, Chang introduced an algebraic system with countably many truth values known as MV-algebras \cite{Chang}.\smallskip

Another field of multi-valued logic is the theory of fuzzy set and logic introduced by Zadeh from 1965 (\cite{Zadeh} and \cite{Hajek}). A fuzzy set extends a classical set by assigning each of its elements with a membership degree between 0 and 1. Fuzzy logic allows the truth value of variables to be any real number between 0 and 1. The problem with fuzzy logic is that fuzzy membership and truth lack rigorous definitions and are often under subjectivity and speculation.\smallskip

Furthermore, probability theory studies the uncertainty of random events and resembles multi-valued logic in some sense. Probability logic is proposed in which the truth values of sentences are replaced with probabilities (\cite{Gaifman} and \cite{Scott-Krauss}). Since the range of the truth values in a genuine multi-valued logic should include any real number between 0 and 1, it is reasonable to believe that such a theory should be related to probability (logic). However, a genuine theory of multi-valued logic must be beyond probability because thought is much more complex than random events.\smallskip

In this paper, we will present a new model of multi-valued logic based on the notion of a bundle of predicates. In general, a predicate in formal logic can be thought as a single standard that is satisfied (by a variable) if the predicate is true and not if it is false. As a result, it is evident intuitively that the degree of truth for a predicate in multi-valued logic can be described by the rate of (many) standards it passes (satisfies). Consequently, its degree of truth can be defined as the weight (percentage) of true predicates in a bundle of predicates. This idea is illustrated by the following example. \smallskip

Suppose $p(x)\Leftrightarrow\,$\textquotedblleft he is less than $20$ years old\textquotedblright\ ($x$ is the age of \textquotedblleft he\textquotedblright). Obviously, $p(x)$ is a first-order predicate in that $p(x)\Leftrightarrow (0<x<20)$. But \textquotedblleft he is young\textquotedblright\ can not be modeled by a first-order (or formal) predicate because no single age can define \textit{young}. However, we can employ a bundle of first-order predicates for \textit{young}, each of which carries a weight. So let a predicate bundle for \textit{young} be
\[
\mathcal{C}\,=\,\{p_{n}\colon p_{n}(x)\Leftrightarrow (0<x\leqslant n+19) \,\wedge\,1\leqslant n\leqslant20\,\wedge \,r(p_{n})=1/20\,\}
\]
where $r(p_{n})$ is the weight for each $p_{n}$ in $\mathcal{C}$. We also assume that the sum of all weights is one, i.e. ${\sum_{1\leqslant n\leqslant20}}\,r(p_{n})=1$. As the result, the degree of truth (or belief) for \textquotedblleft he is young\textquotedblright\ can be described by the weight of all true $p_{n}$ in $\mathcal{C}$, i.e. $T(x)\,=\,\sum r(p_{n}(x))$.\smallskip

For any $x\leqslant20,$ $x$ satisfies all $p_{n}$ in $\mathcal{C}$ and so $T(x)=1$, i.e. any age less than $20$ has one hundred percent belief of \textit{young}. On the other hand, for any $x\geqslant40$,\ $x$ satisfies no $p_{n}$ in $\mathcal{C}$ and so $T(x)=0$, i.e. any age over $40$ has zero percent belief of \textit{young}. If $20<x<40$, it has a belief of \textit{young} between zero and one. For instance, $x=25$ satisfies three fourths of $p_n\in\mathcal{C}$ ($6\leqslant n\leqslant20$) and so $T(25)={\sum_{6\leqslant n\leqslant20}}\,r(p_{n})=0.75$. Furthermore, $T(x)$ is monotonically decreasing which is consistent with common sense that a smaller age always has a higher belief of \textit{young} than a bigger age. \smallskip

This precise definition of the truth degree can also be used to rigorously define the logical operations such as conjunction, disjunction and implication for predicates in multi-valued logic. In addition, a special case of predicate bundles known as set bundles is introduced to provide a rigorous foundation for fuzzy sets. Furthermore, a rigorous model of modal logic based on predicate bundles is given and shown to be an improvement over the Kripke system (section \ref{SectionModalLogic}). Finally, a complete solution to the sorites paradox is given based on predicate bundles and degree of truth (conclusion \ref{38}).\smallskip

\section{Predicate Bundles for Multi-Valued Logic}

In this section, we will introduce a new model for multi-valued logic based on the notion of predicate bundles. First, we introduce the weight measure for a predicate bundle.  

\subsection{Weight Measure and Predicate Bundle}

In this paper, we will consider an infinitary language of $\mathcal{L}_{\omega_1,\omega}$ that has logical connectives $\wedge, \vee, \urcorner, \Rightarrow, \Leftrightarrow, \forall, \exists,\,\bigwedge, \bigvee$ for finite conjunction and disjunction, negation, implication, equivalence, universal and existential quantification, infinite conjunction and disjunction, as well as $=, <, >$ for equality and inequality. A predicate bundle is a collection of predicates in $\mathcal{L}_{\omega_1,\omega}$ equipped with a weight measure on each predicate in the bundle.  

\begin{definition}
	\label{Sigma_field}	\ Suppose $\mathscr{P}$ is a collection of predicates (up to equivalence) in $\mathcal{L}_{\omega_1,\omega}$.\footnote{Predicates are formulas with free variables. Notice that predicates in $\mathscr{P}$ can be inconsistent.} A \textbf{$\sigma-$field} on $\mathscr{P}$ is a nonempty collection of subsets of $\mathscr{P}$ closed under complement and countable unions. 
\end{definition}

\begin{axiom}
	\label{10} \ Suppose $\mathscr{P}$ is a collection of predicates in $\mathcal{L}_{\omega_1,\omega}$ and $\sigma(\mathscr{P})$ is a $\sigma-$field on $\mathscr{P}$. The \textbf{weight measure} $r$ on $\mathscr{P}$ is a real-valued function on $\sigma(\mathscr{P})$ that satisfies the following conditions.
\end{axiom}

\begin{enumerate}
	\item \textit{For any} $p\in\mathscr{P},$ $\,r(p)\geqslant0$.
	
	\item $r(\mathscr{P})=1$.
	
	\item \textit{For any (countably many) disjoint} $\mathscr{P}_{i}\in\sigma(\mathscr{P})$
	\[
	r\left(\bigcup_{i\geqslant1}\,\mathscr{P}_{i}\right) \,=\,\sum_{i\geqslant1}\,r(\mathscr{P}_{i})
	\]
	
\end{enumerate}

We can see that a weight measure is like a probability measure on $\mathscr{P}$ that is a generalization of $\Omega$ in which sample points are predicates. As a matter of fact, $\mathscr{P}$ can be reduced to a probability space by choosing its specific predicates (corollary \ref{23}). 

\begin{lemma}
	\label{14} \ Suppose $\mathscr{P}_{1},\mathscr{P}_{2}\in\sigma(\mathscr{P})$. Then
\end{lemma}

\begin{enumerate}
	\item $r(\varnothing)=0$
	
	\item $\mathscr{P}_{2}\subset\mathscr{P}_{1} \,\Longrightarrow \, r(\mathscr{P}_{1}-\mathscr{P}_{2}) \,=\, r(\mathscr{P}_{1}) -r(\mathscr{P}_{2})$
	
	\item $\mathscr{P}_{2}\subset\mathscr{P}_{1} \,\Longrightarrow \, r(\mathscr{P}_{2})\leqslant r(\mathscr{P}_{1})$
	
	\item $\mathscr{P}_{1}\subset\mathscr{P} \,\Longrightarrow \,r(\mathscr{P}_{1})\leqslant1$
\end{enumerate}

\begin{proof}
	\ We only prove (ii) and the rest are obvious.\medskip 
	
	(ii) \ Since $\mathscr{P}_{2}\subset \mathscr{P}_{1}$, $\mathscr{P}_{1}\cup \mathscr{P}_{2}\,=\,\mathscr{P}_{1}$. So by axiom \ref{10}
	\[
	r\left(\mathscr{P}_{1}\right) \,=\,r\left(\mathscr{P}_{1}\cup \mathscr{P}_{2}\right) \,=\,r\left(\left(\mathscr{P}_{1} -\mathscr{P}_{2}\right) \cup \mathscr{P}_{2}\right) \,=\,r\left(\mathscr{P}_{1}-\mathscr{P}_{2}\right)+r\left( \mathscr{P}_{2}\right)
	\]	
	And (ii) follows.\bigskip
\end{proof}

Weight measures on predicate bundles are fundamentally different from measures on sentences in \cite{Gaifman} and \cite{Scott-Krauss}. This is because while measures on sentences in \cite{Gaifman} and \cite{Scott-Krauss} are essentially the study of measures on Boolean algebras following \cite{Horn-Tarski}, weight measures are on $\sigma-$fields of predicates. For example, two valid sentences in $\mathscr{P}$ generally do not have the same weight measure compared to (1) \cite[p2]{Gaifman}. In addition, conjunction and disjunction are (generally) not preserved for predicates in $\mathscr{P}$. So (2) \cite[p2]{Gaifman} also fails for weight measures. However, (1') \cite[p2]{Gaifman} holds for weight measures because equivalent predicates are considered the same in $\mathscr{P}$ (definition \ref{Sigma_field}).\smallskip

\begin{definition}
	\label{DefPredicateBundle} \ A \textbf{bundle of predicates} is a collection of predicates in $\mathcal{L}_{\omega_1,\omega}$ with a weight measure and is denoted as $\left.\mathscr{P}\right\vert_{r}=\{p\colon p\in \mathcal{L}_{\omega_1,\omega} \,\wedge\,r(p)\}$, where $r$ is the weight measure of the bundle. Two predicate bundles $\left.\mathscr{P}_{1}\right\vert_{r_{1}}$ and $\left.\mathscr{P}_{2}\right\vert_{r_{2}}$ are \textbf{identical} if
	\[
	\left.\mathscr{P}_{1}\right\vert_{r_{1}}=\left.\mathscr{P}_{2}\right\vert_{r_{2}}\,\Longleftrightarrow\,\left(\mathscr{P}_{1} =\mathscr{P}_{2}\,\wedge\,\left(\forall p\in\mathscr{P}_{1}\right) \left(r_{1}(p)\,=\,r_{2}(p)\right)\right)
	\]
\end{definition}

Also, $\left.\mathscr{P}_{0}\right\vert_{r}$ is a \textbf{sub-bundle} of $\left.\mathscr{P}\right\vert_{r}$ if $\mathscr{P}_{0} \subset\mathscr{P}$, and is also denoted as $\left.\mathscr{P}_{0}\right\vert_{r}\subset \left.\mathscr{P}\right\vert_{r}$. For $\left.\mathscr{P}_{1}\right\vert _{r}\subset\left.\mathscr{P}\right\vert_{r}$ and $\left.\mathscr{P}_{2}\right\vert_{r} \subset\left.\mathscr{P}\right\vert_{r}$, the \textbf{intersection} and \textbf{union} of $\left.\mathscr{P}_{1}\right\vert_{r}$ and $\left.\mathscr{P}_{2}\right\vert_{r}$ are $\mathscr{P}_{1}\cap\mathscr{P}_{2}$ and $\mathscr{P}_{1}\cup\mathscr{P}_{2}$ respectively.\footnote{In the rest discussion of this paper, we assume $\left.\mathscr{P}\right \vert_{r}, \left.\mathscr{P}_{1}\right\vert_{r_{1}},\cdots, \left.\mathscr{P}_{n}\right\vert_{r_{n}}$ are predicate bundles unless further specified.}\medskip
 
Since predicates in $\mathscr{P}$ can be inconsistent, there may not be a model for $\mathscr{P}$. Therefore, we do not adopt the model-theoretic approach to predicate bundles. 

\subsection{Central Bundle and Degree of Truth}

In this section, we will formally introduce our model of multi-valued logic.

\begin{definition}
	\ Let $\mathfrak{F}$ be a class of predicates in $\mathcal{L}_{\omega_1,\omega}$ of formal logic. The \textbf{multi-valued logic} $\mathfrak{M}$ is a superclass of $\mathfrak{F}$ including predicates that can not be measured by two truth values. Instead, such a $\phi$ in $\mathfrak{M}$ is assigned with a predicate bundle and measured by a degree of truth, a quantity that takes on any number in $[0,1]$ and is denoted as $T(\phi)$. $\phi$ is equivalent to a predicate in $\mathfrak{F}$ if $T(\phi)=1$ (completely true) or $T(\phi)=0$ (completely false).
\end{definition}

\begin{definition}
	\label{DefPredicateBundleOperations} \ Suppose $\phi,\phi_{1},\phi_{2}$ are predicates in the multi-valued logic $\mathfrak{M}$. The \textbf{logical operations of $\mathfrak{M}$} include the \textbf{negation} of $\phi$ denoted as $\urcorner\phi$, the \textbf{conjunction}, \textbf{disjunction}, \textbf{implication}, and \textbf{equivalence} of $\phi_{1}$ and $\phi_{2}$ in $\mathfrak{M}$ denoted as $\phi_{1}\barwedge\phi_{2}$, $\phi_{1}\veebar\phi_{2}$, $\phi_{1}\rightharpoonup\phi_{2}$, and $\phi_{1}\rightleftharpoons\phi_{2}$ respectively. The above forms the atomic formulas in $\mathfrak{M}$. A well-formed formula of $\mathfrak{M}$ is a syntactically valid formula from the atomic formulas in $\mathfrak{M}$.
\end{definition}

Since any predicate in $\mathfrak{M}$ is modeled by a predicate bundle, we can think it as a representation of the predicate.

\begin{definition}
	\label{DefPredicateBundleRepresentation} \ Suppose $\mathscr{D}$ is a universe of discourse, $\phi$ a predicate in $\mathfrak{M}$ whose predicate bundle is $\left.\mathscr{P}\right\vert_{r}$.\footnote{In the rest discussion of this paper, we assume $\phi,\phi_{1}, \cdots,\phi_{n}$ are predicates in $\mathfrak{M}$, $\mathscr{D}, \mathscr{D}_1, \cdots, \mathscr{D}_n$ are universes of discourse, and $\widetilde{\mathcal{P}}_i(\phi_{i})$ are predicate bundle representations of $\phi_i$ unless further specified.} The \textbf{predicate bundle representation} for $\phi$ is denoted as $\,\widetilde{\mathcal{P}}(\phi)=\left( \left.\mathscr{P}\right\vert_{r}, \mathscr{D}\right)$.
\end{definition}

Now we define degree of truth based on the notion of a central bundle.

\begin{definition}
	\label{DefCentralBundle} \ Suppose $\,\widetilde{\mathcal{P}}(\phi)=\left(\left.\mathscr{P}\right\vert_{r}, \mathscr{D}\right)$. For $x\in\mathscr{D}$, the \textbf{central bundle} of $\phi(x)$ is a sub-bundle of $\left.\mathscr{P}\right\vert_{r}$ that is defined as $\,\left[\phi(x)\right] =\{p\colon p\in\mathscr{P}\,\wedge\, \vdash p(x)\}$, and the \textbf{dual central bundle} of $\phi(x)$ is $\,\left[\phi(x)\right]^{c}=\{p\colon p\in\mathscr{P}\,\wedge\vdash \urcorner p(x)\}$. If $\mathscr{P}$ consists only of sentences,  $\left[\phi\right] =\{p\colon p\in\mathscr{P}\,\wedge\,\vdash p\}$ and $\left[\phi\right]^{c} = \{p\colon p\in\mathscr{P}\,\wedge\,\vdash \urcorner p\}$.
\end{definition}

\begin{remark}
	\ Notice that $\,\vdash p(x)$ in definition \ref{DefCentralBundle} refers to any $p$ in $\mathscr{P}$ satisfied by (the same) $x$ which is different from that of model theory, i.e. $\mathfrak{A}\vDash \mathscr{P}$ means for any $p\in\mathscr{P}$, there is a $\,x\in A$ that $\,\vdash p(x)$ \cite{Chang-Keisler}. (In other words, $x$ that satisfies distinct $p$ may be different.)
\end{remark}

\begin{definition}
	\label{DefBelief} \ For $x\in\mathscr{D}$, the \textbf{degree of truth} (or \textbf{belief}) of $\phi(x)$ is defined to be the weight of the central bundle of $\phi(x)$, i.e. $T(\phi(x))\,=\, r(\left[\phi(x)\right])$. If $\mathscr{P}$ consists only of sentences, $T(\phi)= r(\left[\phi\right])$.
\end{definition}

\begin{remark}
\ Notice that central bundles are classical sets upon which the truth degree of a predicate in $\mathfrak{M}$ is rigorously defined with the law of excluded middle. Thus LEM is retained in $\mathfrak{M}$.\footnote{This is consistent with the Hilbert's view on LEM (see \cite[p476]{Hilbert}).}
\end{remark}

Besides the infinitary language of $\mathcal{L}_{\omega_1,\omega}$ with $\wedge, \vee, \urcorner, \Rightarrow, \Leftrightarrow, \forall, \exists, \bigwedge, \bigvee$, the \textbf{language} of the multi-valued logic $\mathfrak{M}$ is as follows. Each predicate $\phi$ in $\mathfrak{M}$ has a predicate bundle representation $\,\widetilde{\mathcal{P}}(\phi)= \left(\left.\mathscr{P}\right\vert_{r}, \mathscr{D}\right)$, and is measured by a degree of truth $T(\phi)$ in the $[0,1]$ of $\Bbb{R}$. Also, the negation $\urcorner\phi$, conjunction $\phi_{1}\barwedge\phi_{2}$, disjunction $\phi_{1}\veebar\phi_{2}$, implication $\phi_{1}\rightharpoonup\phi_{2}$, and equivalence $\phi_{1}\rightleftharpoons\phi_{2}$ are measured by true degrees as $T(\urcorner\phi)$, $T(\phi_{1}\barwedge\phi_{2})$, $T(\phi_{1}\veebar\phi_{2})$, $T(\phi_{1}\rightharpoonup\phi_{2})$, and $T(\phi_{1}\rightleftharpoons\phi_{2})$ respectively. In a later section, we can see that a well-formed formula in $\mathfrak{M}$ often does not have a truth degree (corollary \ref{21}). So we have

\begin{definition}
	\label{InvalidFormula}\ A well-formed (syntactically valid) formula of $\mathfrak{M}$ is \textbf{(semantically) valid} if it has a degree of truth. Otherwise, it is a \textbf{(semantically) invalid} formula.
\end{definition}

\begin{lemma} 
	\label{11}\ For any $x\in\mathscr{D}$
\end{lemma}

\begin{enumerate}
	\item $\left[\urcorner\phi(x)\right] \,=\,\left[\phi(x)\right]^{c}\,=\,\mathscr{P}-\left[\phi(x)\right]$
	
	\item $\left[\phi(x)\right] \,\cap\,\left[\urcorner\phi(x)\right]\,=\,\varnothing$
	
	\item $\left[\phi(x)\right] \,\cup\,\left[\urcorner\phi(x)\right]\,=\,\mathscr{P}$
\end{enumerate}

\begin{proof}
	\ (i) By definition \ref{DefCentralBundle}, $\left[\urcorner\phi(x)\right] \,=\,\left[\phi(x)\right]^{c}$ and (i) follows.\medskip
	
	(ii) and (iii) follow by (i) and definition \ref{DefCentralBundle}.\smallskip
\end{proof}

\begin{corollary}
	\label{12}\ For any $x\in\mathscr{D}$
	\[T(\phi(x))+T(\urcorner\phi(x))\,=\,1\]
\end{corollary}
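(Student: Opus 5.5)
The plan is to read the identity directly off Lemma \ref{11} together with the additivity of the weight measure in Axiom \ref{10}. By Definition \ref{DefBelief}, $T(\phi(x))=r([\phi(x)])$ and $T(\urcorner\phi(x))=r([\urcorner\phi(x)])$. So the claim reduces to showing that the weights of these two central bundles sum to $r(\mathscr{P})=1$.

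First, I will apply Lemma \ref{11}(ii) and (iii). These say that $[\phi(x)]$ and $[\urcorner\phi(x)]$ are disjoint and that their union is all of $\mathscr{P}$. I will then invoke countable additivity, Axiom \ref{10}(iii), taking $\mathscr{P}_1=[\phi(x)]$, $\mathscr{P}_2=[\urcorner\phi(x)]$ and $\mathscr{P}_i=\varnothing$ for $i\geqslant 3$. The empty sets contribute $0$ by Lemma \ref{14}(i), so
\[
r([\phi(x)])+r([\urcorner\phi(x)])\,=\,r([\phi(x)]\cup[\urcorner\phi(x)])\,=\,r(\mathscr{P})\,=\,1,
\]
where the last equality is Axiom \ref{10}(ii). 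Substituting the definition of $T$ then gives the corollary.

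Equivalently, one can use Lemma \ref{11}(i), $[\urcorner\phi(x)]=\mathscr{P}-[\phi(x)]$, together with Lemma \ref{14}(ii) with $\mathscr{P}_1=\mathscr{P}$, which gives $r([\urcorner\phi(x)])=1-r([\phi(x)])$ directly.

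The only genuine obstacle is measurability: $r$ is defined only on $\sigma(\mathscr{P})$, so $[\phi(x)]$ must be assumed to lie in $\sigma(\mathscr{P})$. This is implicit in Definition \ref{DefBelief}, since $T(\phi(x))$ is assumed to exist. Closure of $\sigma(\mathscr{P})$ under complement (Definition \ref{Sigma_field}) then puts $[\urcorner\phi(x)]=\mathscr{P}-[\phi(x)]$ in $\sigma(\mathscr{P})$ as well. I will state this assumption explicitly at the start of the proof.
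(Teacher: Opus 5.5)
Your proposal is correct and follows the paper's route: the paper proves the corollary from Definition~\ref{DefBelief}, Lemma~\ref{11} and Axiom~\ref{10}, i.e.\ the two central bundles partition $\mathscr{P}$, so additivity together with $r(\mathscr{P})=1$ gives the identity. Your remark that $[\phi(x)]$ must lie in $\sigma(\mathscr{P})$, with closure under complement then covering $[\urcorner\phi(x)]$, makes explicit an assumption the paper leaves implicit.
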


\begin{proof}
	\ By definition \ref{DefBelief}, lemma \ref{11} and axiom \ref{10}. \smallskip
\end{proof}


\subsection{Equivalence}\label{SectionEquivalence}

Intuitively, if for any $x$ in a universe of discourse, two predicates of $\mathfrak{M}$ always have the same truth value, they behave identically in the logic operations of $\mathfrak{M}$. So in this case, we say that two predicates are equivalent. Similar to formal logic, a predicate in a valid formula of $\mathfrak{M}$ can be replaced by an equivalent one without affecting the semantic meaning (truth values) of the formula.

\begin{definition}
\label{DefEquivalenceMultiValueLogic} \ Suppose $\,\widetilde{\mathcal{P}}(\phi_1)=\left(\left.\mathscr{P}_1 \right\vert_{r_1}, \mathscr{D}\right)$ and $\,\widetilde{\mathcal{P}}(\phi_2)=\left(\left.\mathscr{P}_2\right\vert_{r_2}, \mathscr{D}\right)$. Then the \textbf{equivalence} of $\phi_{1}$ and $\phi_{2}$ is defined as:
\begin{equation}
\forall(x\in\mathscr{D})\left(\left(\phi_{1}(x)\rightleftharpoons\phi_{2}(x)\right)\Longleftrightarrow\left(T(\phi_{1}(x))=T(\phi_{2}(x))\right)\right)	 \label{EquivalenceMultiValueLogic}
\end{equation}
If $\phi_{1}$ and $\phi_{2}$ are sentences, then
\[
	\left(\phi_{1}\rightleftharpoons\phi_{2}\right)\Longleftrightarrow\left(T(\phi_{1})=T(\phi_{2})\right)
\]
\end{definition}

\begin{remark}
	\ Note that definition \ref{DefEquivalenceMultiValueLogic} makes equivalence in $\mathfrak{M}$ a formal predicate. Obviously, \textquotedblleft $\rightleftharpoons $\textquotedblright\ is an equivalence relation that forms a partition on the predicates of $\mathfrak{M}$.
\end{remark}

\begin{lemma}
	\ For any $x\in\mathscr{D}$
\end{lemma}

\begin{enumerate}
	\item $\phi_{1}(x)\rightleftharpoons \phi_{1}(x)$
	
	\item $\left(\phi_{1}(x)\rightleftharpoons\phi_{2}(x)\right) \,\Longrightarrow \,\left(\phi_{2}(x)\rightleftharpoons \phi_{1}(x)\right)$
	
	\item $\left(\phi_{1}(x)\rightleftharpoons\phi_{2}(x)\,\wedge \,\phi_{2}(x)\rightleftharpoons\phi_{3}(x)\right) \,\,\Longrightarrow \,\,\left(\phi_{1}(x)\rightleftharpoons \phi_{3}(x)\right)$
\end{enumerate}

\begin{corollary}
	\label{36} \ For any $x\in\mathscr{D}$
\end{corollary}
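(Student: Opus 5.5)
Since the items of Corollary \ref{36} come after ``For any $x\in\mathscr{D}$'' and are not shown above, this plan is written for that displayed clause as a family of assertions about $\rightleftharpoons$ that hold pointwise in $x$. The approach is uniform for every item. I will fix an arbitrary $x\in\mathscr{D}$ and rewrite each occurrence of $\psi_{1}(x)\rightleftharpoons\psi_{2}(x)$ through (\ref{EquivalenceMultiValueLogic}) as the real-number identity $T(\psi_{1}(x))=T(\psi_{2}(x))$. This turns each item into a statement about numbers in $[0,1]$. The quantifier ``for any $x\in\mathscr{D}$'' then costs nothing, because every step is carried out at the fixed $x$.

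The first step is to express every truth degree through central bundles, using Definition \ref{DefBelief}: $T(\psi(x))=r([\psi(x)])$.

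The second step handles negations. Wherever a negation $\urcorner\phi$ appears, I will invoke Lemma \ref{11}(i), $[\urcorner\phi(x)]=\mathscr{P}-[\phi(x)]$. With this, Corollary \ref{12} gives $T(\urcorner\phi(x))=1-T(\phi(x))$. Negated terms on both sides of an equivalence then reduce to the unnegated ones by the bijective map $t\mapsto 1-t$ on $[0,1]$. For example, $T(\phi_{1}(x))=T(\phi_{2}(x))$ holds exactly when $1-T(\phi_{1}(x))=1-T(\phi_{2}(x))$. Iterating the map gives $T(\urcorner\urcorner\phi(x))=T(\phi(x))$.

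The third step covers items that combine equivalences, for instance as a substitution or transfer property. These follow from the reflexivity, symmetry and transitivity lemma stated just before the corollary. That lemma in turn is nothing more than the same properties of equality in $\Bbb{R}$.

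The step I expect to need care is any item in which $\rightleftharpoons$ links predicates with \emph{different} bundles $\left.\mathscr{P}_1\right\vert_{r_1}$ and $\left.\mathscr{P}_2\right\vert_{r_2}$. Here the set identities of Lemma \ref{11} cannot be compared across bundles, so I must work only with the numbers $T(\cdot)$ and never with the central bundles themselves. I also need to check that each formula in the item is semantically valid in the sense of Definition \ref{InvalidFormula}, that is, that it actually has a truth degree. For negation this holds because $[\urcorner\phi(x)]$ lies in $\sigma(\mathscr{P})$, as a complement of $[\phi(x)]$, whenever $[\phi(x)]$ does.

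If an item instead involves $\barwedge$, $\veebar$ or $\rightharpoonup$, whose truth degrees are defined only later, I would postpone it and treat it in the same numerical way once those definitions are available.
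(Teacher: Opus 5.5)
The items you could not see are (i) $[\phi_{1}(x)]=[\phi_{2}(x)]\Rightarrow T(\phi_{1}(x))=T(\phi_{2}(x))$ and (ii) $[\phi_{1}(x)]=[\phi_{2}(x)]\Rightarrow \phi_{1}(x)\rightleftharpoons\phi_{2}(x)$. Your two core moves are exactly the paper's proof: you write $T$ as the weight of the central bundle (Definition \ref{DefBelief}), and you read $\rightleftharpoons$ as equality of truth degrees via (\ref{EquivalenceMultiValueLogic}). The chain $T(\phi_{1}(x))=r([\phi_{1}(x)])=r([\phi_{2}(x)])=T(\phi_{2}(x))$ gives (i), and (ii) is (i) read through Definition \ref{DefEquivalenceMultiValueLogic}. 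The rest of your plan is not needed: Lemma \ref{11}, Corollary \ref{12}, the map $t\mapsto 1-t$, transitivity and the validity checks. The paper in fact runs the negation argument the other way. It obtains $\urcorner\urcorner\phi(x)\rightleftharpoons\phi(x)$ (Corollary \ref{37}) and De Morgan (Theorem \ref{44}(vi)) \emph{from} this corollary, by first proving an identity of central bundles.

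Your worry about distinct bundles is well placed. However, your remedy of working ``only with the numbers $T(\cdot)$'' is unavailable here, because the hypothesis is itself an identity of central bundles. The middle equality $r([\phi_{1}(x)])=r([\phi_{2}(x)])$ holds only if both truth degrees are computed with one and the same weight measure. If $\phi_{1},\phi_{2}$ carry measures $r_{1}\neq r_{2}$ on the same family of predicates, equal central bundles need not have equal weights. The paper's one-line proof assumes a common measure tacitly: every use of the corollary lives inside a single, possibly joint, bundle. You should state that hypothesis explicitly.
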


\begin{enumerate}
	\item $\left(\left[\phi_{1}(x)\right] =\left[\phi_{2}(x)\right]\right) \,\Longrightarrow\,\left(T(\phi_{1}(x))=T(\phi_{2}(x))\right)$
	
	\item $\left(\left[\phi_{1}(x)\right] =\left[\phi_{2}(x)\right]\right) \,\Longrightarrow\,\left(\phi_{1}(x)\rightleftharpoons \phi_{2}(x)\right)$
\end{enumerate}

\begin{proof}
	\ By definition \ref{DefBelief} and \ref{DefEquivalenceMultiValueLogic}.\smallskip
\end{proof}

\begin{corollary}
	\label{37} \ For any $x\in\mathscr{D},\,\,\urcorner\urcorner\phi(x)\,\rightleftharpoons\,\phi(x)$
\end{corollary}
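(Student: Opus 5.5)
The plan is to reduce the claim to a statement about degrees of truth. By definition \ref{DefEquivalenceMultiValueLogic}, the claim $\urcorner\urcorner\phi(x)\rightleftharpoons\phi(x)$ holds exactly when $T(\urcorner\urcorner\phi(x))=T(\phi(x))$ for every $x\in\mathscr{D}$. So I will compare the two truth degrees directly. The negation $\urcorner\phi$ is understood to carry the same predicate bundle representation $\left(\left.\mathscr{P}\right\vert_{r},\mathscr{D}\right)$ as $\phi$, with central bundle $\left[\urcorner\phi(x)\right]$ as in lemma \ref{11}.

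The quickest route applies corollary \ref{12} twice. First apply it to $\phi$, which gives $T(\urcorner\phi(x))=1-T(\phi(x))$. Then apply it to the predicate $\urcorner\phi$ in place of $\phi$, which gives $T(\urcorner\urcorner\phi(x))=1-T(\urcorner\phi(x))$. Substituting the first identity into the second yields $T(\urcorner\urcorner\phi(x))=1-(1-T(\phi(x)))=T(\phi(x))$, and definition \ref{DefEquivalenceMultiValueLogic} then finishes the proof.

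As an alternative, I could argue at the level of central bundles and then invoke corollary \ref{36}. Lemma \ref{11}(i) applied to $\urcorner\phi$ gives $\left[\urcorner\urcorner\phi(x)\right]=\mathscr{P}-\left[\urcorner\phi(x)\right]$. Applied to $\phi$, it gives $\left[\urcorner\phi(x)\right]=\mathscr{P}-\left[\phi(x)\right]$. Hence $\left[\urcorner\urcorner\phi(x)\right]=\mathscr{P}-(\mathscr{P}-\left[\phi(x)\right])=\left[\phi(x)\right]$, since $\left[\phi(x)\right]\subset\mathscr{P}$. Corollary \ref{36}(ii) then gives the equivalence.

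The only delicate point is justifying that lemma \ref{11} and corollary \ref{12} may be applied to $\urcorner\phi$, which is itself a predicate of $\mathfrak{M}$ with the same bundle. Concretely, one must check that $\vdash\urcorner\urcorner p(x)$ iff $\vdash p(x)$ for each $p\in\mathscr{P}$. This holds by classical double negation in $\mathcal{L}_{\omega_1,\omega}$, which is consistent with the paper's retention of LEM. I will state this explicitly and otherwise treat the proof as routine.
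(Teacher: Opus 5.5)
Your proposal is correct, and your alternative argument is exactly the paper's proof: Lemma \ref{11}(i) gives $\left[\urcorner\urcorner\phi(x)\right]=\left[\urcorner\phi(x)\right]^{c}=\left[\phi(x)\right]$, and Corollary \ref{36} finishes. Your primary route, applying Corollary \ref{12} twice, is an equally valid minor variant. It works only at the level of truth degrees, so it needs the additivity and normalization of $r$, whereas the paper's route proves the stronger equality of central bundles. Your remark that applying Lemma \ref{11} to $\urcorner\phi$ tacitly relies on classical double negation for each $p\in\mathscr{P}$ is a point the paper leaves implicit.
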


\begin{proof}
	\ By lemma \ref{11}(i)
	\[
	\left[\urcorner\urcorner\phi(x)\right] \,=\,\left[\urcorner\phi(x)\right]^{c}\,=\,\left[\phi(x)\right]
	\]
	
	So it follows by corollary \ref{36}. \bigskip
\end{proof}

The fact that equivalence in $\mathfrak{M}$ is formal allows us to apply the identity theory in formal logic to model linguistic expressions (involving adjectives and adverbs) in a natural language. Due to the diversity of adjectives, we will only discuss adjectives that can be represented by numerical values in this paper. For instances, \textit{young}, \textit{middle-aged} and \textit{old} can be represented by a person's age; \textit{short}, \textit{medium} and \textit{tall} can be represented by a person's height. First, let's take a look at following examples.

\begin{problem}
	\label{16} \ Suppose $\phi_{1}(x_{1})\rightleftharpoons\,$\textquotedblleft$\,x_{1}$ is young\textquotedblright\ ($x_{1}$ is the age for a person), $\widetilde{\mathcal{P}}(\phi_1)=(\left.\mathscr{P}_1\right\vert_{r_1}, \allowbreak [0,150])$\footnote{We assume that human age is no more than $150$ years old.} and $\left.\mathscr{P}_{1}\right\vert_{r_{1}}\,=\,\{q_{n}(z)\colon q_{n}(z)\Leftrightarrow (z\leqslant n+19) \,\wedge\,1\leqslant n\leqslant20\,\wedge\,r_{1}(q_{n})=0.05\,\}$. Find $T(\phi_{1}(x_{1}))$ and $T(\urcorner\phi_{1}(x_{1}))$ for $x_{1}\,=\,20$, $30$, $40\left(\text{years old}\right)$.
\end{problem}

\begin{proof}
	[Solution]\ \ Since $20$ satisfies each $q_{n}(z)$ in $\mathscr{P}_{1}$, $\left[\phi_{1}(20)\right] =\mathscr{P}_{1}$. So by definition \ref{DefBelief}, $T(\phi_{1}(20))=1$. Also, $T(\phi_{1}(40))=0$ for $\left[\phi_{1}(40)\right]=\varnothing$. Since $\left[\phi_{1}(30)\right] =\{q_{n}(z)\colon q_{n}(z)\in\mathscr{P}_{1}\,\wedge\,11\leqslant n\leqslant20\}$, $T(\phi_{1}(30)) =0.5$.\smallskip
	
	Since $\urcorner\phi_{1}(x_{1})\,\rightleftharpoons$ \textquotedblleft$x_{1}$ is not young\textquotedblright, by lemma \ref{11}, $\left[\urcorner\phi_{1}(20)\right]=\left[\phi_{1}(20)\right]^c=\varnothing$, $\left[\urcorner\phi_{1}(40)\right] =\left[\phi_{1}(40)\right]^c=\mathscr{P}_{1}$, $\left[\urcorner\phi_{1}(30)\right] \,=\,\mathscr{P}_{1}-\left[\phi_{1}(30)\right]$. Thus $T(\urcorner\phi_{1}(20))=0$, $T(\urcorner\phi_{1}(40))=1$, $T(\urcorner\phi_{1}(30))=0.5$.\smallskip
\end{proof}

\begin{problem}
	\label{32} \ Suppose $\phi_{2}(x_{2})\rightleftharpoons\,$\textquotedblleft $\,x_{2}$ is tall\textquotedblright\ ($x_{2}$ is the height for a person), $\widetilde{\mathcal{P}}(\phi_2)=(\left.\mathscr{P}_2\right\vert_{r_2}, \allowbreak [0,3])$\footnote{We assume that human height is no more than $3$ meters.} and $\left.\mathscr{P}_{2}\right\vert_{r_{2}}\,=\,\{q_{n}(z)\colon q_{n}(z)\Leftrightarrow (\alpha(n)\leqslant z) \,\wedge\, \alpha(n) =n/100+1.65 \,\wedge\, 1\leqslant n\leqslant 20\,\wedge\,r_{2}(q_{n})=0.05\,\}$. Find $T(\phi_{2}(x_{2}))$ for $x_{2}\,=\,1.7$, $1.75$, $1.8\left(m\right)$.
\end{problem}

\begin{proof}
	[Solution] \ Obviously, $1.66\leqslant\alpha(n)\leqslant1.85$. Since $\left[\phi_{2}(1.7)\right] =\{q_{n}(z)\colon q_{n}(z)\in\mathscr{P}_{2}\,\wedge\,1\leqslant n\leqslant 5\}$, by definition \ref{DefBelief}, $T(\phi_{2}(1.7))=0.25$. Likewise, $\,T(\phi_{2}(1.75))=0.5\,$ and $\,T(\phi_{2}(1.8))=0.75$.\smallskip
\end{proof}

\begin{corollary}
	\label{23} \ Suppose $\widetilde{\mathcal{P}}(\phi)=(\left.\mathscr{P}\right\vert_{r}, \mathscr{D})$. If $\left.\mathscr{P}\right\vert_{r}$ consists of predicates which are each satisfied by a unique singleton in $\mathscr{D}$ and vice versa, then $\mathscr{D}$ is reduced to a probability space and $T(\phi(x))$ is identical to the weight of a singleton (probability of a sample).
\end{corollary}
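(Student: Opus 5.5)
The plan is to build an explicit correspondence between $\mathscr{D}$ and $\mathscr{P}$, move the weight measure $r$ along it to get a measure on $\mathscr{D}$, and then compute the central bundle of $\phi(x)$ directly.

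By hypothesis each $p\in\mathscr{P}$ is satisfied by exactly one point of $\mathscr{D}$. Call it $x_p$, so that $\vdash p(x)$ holds iff $x=x_p$. The ``vice versa'' clause says that every $x\in\mathscr{D}$ is the unique satisfier of exactly one predicate, which we call $p_x$. Hence $\pi\colon\mathscr{P}\to\mathscr{D}$, $\pi(p)=x_p$, is a bijection with inverse $x\mapsto p_x$. Since predicates in $\mathscr{P}$ are taken up to equivalence (definition \ref{Sigma_field}), two predicates with the same unique satisfier coincide. This is the point I expect to need the most care, since injectivity of $\pi$ relies on reading ``up to equivalence'' as equivalence over $\mathscr{D}$.

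Next I transport the structure along $\pi$. Put $\sigma(\mathscr{D})=\{\pi(\mathscr{A})\colon \mathscr{A}\in\sigma(\mathscr{P})\}$ and $\mu(\pi(\mathscr{A}))=r(\mathscr{A})$. Because $\pi$ is a bijection, it commutes with complements and countable unions and preserves disjointness. So $\sigma(\mathscr{D})$ is a $\sigma$-field on $\mathscr{D}$, and $\mu$ inherits nonnegativity, $\mu(\mathscr{D})=r(\mathscr{P})=1$, and countable additivity from axiom \ref{10}. Lemma \ref{14} then transfers verbatim. Thus $(\mathscr{D},\sigma(\mathscr{D}),\mu)$ is a probability space whose sample points are the elements of $\mathscr{D}$, each corresponding to one predicate. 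I will tacitly assume the singletons $\{p\}$ lie in $\sigma(\mathscr{P})$, as axiom \ref{10}(i) already presupposes.

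Finally I compute the degree of truth. By definition \ref{DefCentralBundle}, $[\phi(x)]=\{p\in\mathscr{P}\colon \vdash p(x)\}=\{p\colon x_p=x\}=\{p_x\}$, a single predicate. By definition \ref{DefBelief},
\[
T(\phi(x))=r(\{p_x\})=\mu(\{x\}),
\]
which is the weight of the singleton, i.e.\ the probability of the sample point $x$. As a consistency check, corollary \ref{12} gives $T(\urcorner\phi(x))=1-\mu(\{x\})=\mu(\mathscr{D}-\{x\})$, which agrees with lemma \ref{11}(i).
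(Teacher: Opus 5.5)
Your proposal is correct and follows the paper's argument: the central bundle $[\phi(x)]$ is the single predicate satisfied by $x$, so $T(\phi(x))=r(p_x)$. Your explicit bijection $\pi$ and transported measure $\mu$ make precise the ``reduced to a probability space'' claim, which the paper asserts only from the fact that every $x\in\mathscr{D}$ satisfies some $p$.

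Injectivity of $\pi$ already follows from your own reading of the ``vice versa'' clause, since each $x$ satisfies exactly one predicate. So the appeal to predicates being taken up to equivalence is not needed.
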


\begin{proof}
	\ For any $p\in\mathscr{P}$, if $\vdash p(x)$ ($x\in\mathscr{D}$), then for any $y\in \mathscr{D}$ and $y\neq x$, $\vdash\urcorner p(y)$. Thus $\left[\phi(x)\right]=\{p\}$ and $T(\phi(x))=r(p)$. Since for any $x\in\mathscr{D}$, there is a $p\in\mathscr{P}$ that $\vdash p(x)$, $\mathscr{D}$ is reduced to a probability space.\bigskip
\end{proof}

In next sections, we will investigate logical operations such as conjunction, disjunction and implication in $\mathfrak{M}$.

\subsection{Conjunction and Disjunction}

The conjunction and disjunction in the multi-valued logic $\mathfrak{M}$ can model the connectives \textit{and }and\textit{\ or} in linguistics, e.g. \textquotedblleft he is young and tall\textquotedblright, \textquotedblleft it is big or heavy\textquotedblright, and so on. They are based upon joint bundles that are Cartesian products of predicate bundles.

\begin{definition}
	\label{DefJointPredicateBundles} \ Suppose $\left.\mathscr{P}_{i}\right\vert_{r_{i}}=\,\{p_{i}\colon p_{i}\in\mathcal{L}_{\omega_1,\omega}\,\wedge\,r_{i}(p_{i})\}$ $\left(1\leqslant i\leqslant n\right)$ are $n$ predicate bundles. A \textbf{joint bundle} of $\left.\mathscr{P}_{i} \right\vert_{r_{i}}$ is defined as:
	\[
	\left.\left(\mathscr{P}_{1}\times\cdots\times\mathscr{P}_{n}\right)\right\vert_{r}\,=\,\{\left(p_{1},\cdots,p_{n}\right) \colon p_{i}\in\mathscr{P}_{i}\,\wedge\,r\left(  \left(  p_{1},\cdots,p_{n}\right)\right) \,\wedge\,1\leqslant i\leqslant n\,\}
	\]
	Where $r$ is a weight measure on $\sigma(\mathscr{P}_{1}\times\cdots\times\mathscr{P}_{n})$.
\end{definition}

The following axiom holds for a joint bundle.

\begin{axiom}
	\label{17} \ Suppose $\left.\mathscr{P}_{i}\right\vert_{r_{i}}$ $\left(1\leqslant i\leqslant n\right)$ are given in definition \ref{DefJointPredicateBundles}. For any $\mathscr{I}_{i}\in\sigma(\mathscr{P}_{i})$
	\[
	r\left(\mathscr{P}_{1}\times\cdots\times\mathscr{I}_{i}\times\cdots\times\mathscr{P}_{n}\right) \,=\,r(\mathscr{P}_{1})\cdots r(\mathscr{I}_{i})\cdots r(\mathscr{P}_{n})\,=\,r(\mathscr{I}_{i})
	\]
\end{axiom}

\begin{corollary}
	\label{52}\qquad$r\left(\mathscr{P}_{1}\times\cdots\times\mathscr{P}_{n}\right) \,=\,1$.
\end{corollary}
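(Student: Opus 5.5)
The plan is to specialize Axiom \ref{17} to the trivial choice $\mathscr{I}_{i}=\mathscr{P}_{i}$ for one fixed index $i$, say $i=1$. This is a legitimate choice: $\sigma(\mathscr{P}_{1})$ is a nonempty collection of subsets of $\mathscr{P}_{1}$ closed under complement and countable unions (definition \ref{Sigma_field}). Taking any member $\mathscr{A}$ of it, the complement $\mathscr{P}_{1}-\mathscr{A}$ is also a member, and so is the union $\mathscr{A}\cup(\mathscr{P}_{1}-\mathscr{A})=\mathscr{P}_{1}$. Hence $\mathscr{P}_{1}\in\sigma(\mathscr{P}_{1})$, and Axiom \ref{17} applies with $\mathscr{I}_{1}=\mathscr{P}_{1}$.

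With this choice, the left-hand side of Axiom \ref{17} becomes exactly $r\left(\mathscr{P}_{1}\times\cdots\times\mathscr{P}_{n}\right)$. The right-hand side becomes $r(\mathscr{I}_{1})=r_{1}(\mathscr{P}_{1})$, where $r(\mathscr{I}_{i})$ is read as the weight of $\mathscr{I}_{i}$ in the bundle $\left.\mathscr{P}_{i}\right\vert_{r_{i}}$, as the axiom intends. By Axiom \ref{10}(ii) applied to the weight measure $r_{1}$ on $\mathscr{P}_{1}$, this equals $1$. Equivalently, in the middle product form of Axiom \ref{17}, every factor is $r_{j}(\mathscr{P}_{j})=1$ by Axiom \ref{10}(ii), so the product is $1$.

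The only step needing care is checking that $\mathscr{P}_{i}$ itself lies in $\sigma(\mathscr{P}_{i})$, so that the axiom may be invoked at all; the closure argument above handles this. The rest is direct substitution.
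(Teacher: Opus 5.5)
Your proof is correct and follows the paper's intended derivation. The corollary is stated without proof immediately after Axiom \ref{17}, and it follows exactly as you argue: take $\mathscr{I}_{i}=\mathscr{P}_{i}$ and use $r_{j}(\mathscr{P}_{j})=1$ from Axiom \ref{10}(ii), with your check that $\mathscr{P}_{i}\in\sigma(\mathscr{P}_{i})$ supplying the one detail the paper leaves implicit. (It is also immediate from Definition \ref{DefJointPredicateBundles} itself: since $r$ is declared to be a weight measure on $\sigma(\mathscr{P}_{1}\times\cdots\times\mathscr{P}_{n})$, Axiom \ref{10}(ii) applies to it directly.)
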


\begin{definition}
	\label{DefSetBundleRepresentationConjunction} \ Suppose $\widetilde{\mathcal{P}}(\phi_{1})=\left(\left.\mathscr{P}_{1} \right\vert_{r_{1}}, \mathscr{D}_1\right)$ and $\widetilde{\mathcal{P}}(\phi_{2})=\left(\left.\mathscr{P}_{2}\right \vert_{r_{2}}, \mathscr{D}_2\right)$. Then the \textbf{predicate bundle representation of the conjunction of} $\,\phi_{1}$ and $\,\phi_{2}$ is:
	\[
	\widetilde{\mathcal{P}}(\phi_{1}\barwedge\phi_{2})\,=\,\left(\left.\left(\mathscr{P}_{1}\times \mathscr{P}_{2}\right)\right\vert_{r}, \mathscr{D}_1\times\mathscr{D}_2\right)
	\]
	For $x\in\mathscr{D}_1$ and $y\in\mathscr{D}_2$, the \textbf{central bundle} of $\phi_{1}\barwedge\phi_{2}$ is:
	\begin{equation}
		\left[\phi_{1}(x)\barwedge\phi_{2}(y)\right] \,=\,\{(p,q) \colon (p\in\mathscr{P}_{1}\,\wedge\, q\in\mathscr{P}_{2}) \,\wedge\, (\vdash p(x)\,\wedge\, \vdash q(y))\} \label{ConjunctionCentralClass}
	\end{equation}
	If $\mathscr{P}_1$ and $\mathscr{P}_2$ consist only of sentences, the \textbf{central bundle} of $\phi_{1}\barwedge\phi_{2}$ is:
	\begin{equation}
		\left[\phi_{1}\barwedge\phi_{2}\right] \,=\,\{(p,q) \colon (p\in\mathscr{P}_{1}\,\wedge\,q\in\mathscr{P}_{2}) \,\wedge\, (\vdash p \,\wedge\,\vdash q)\}
	\end{equation}
	The \textbf{degree of truth} for $\,\phi_{1}\barwedge\phi_{2}$ is: $\,T(\phi_{1}(x)\barwedge\phi_{2}(y)) \,= \,r(\left[\phi_{1}(x) \barwedge \phi_{2}(y)\right])$ or $\,T(\phi_{1}\barwedge\phi_{2}) \,= \,r(\left[\phi_{1} \barwedge \phi_{2}\right])$.
\end{definition}

\begin{definition}
	\label{DefSetBundleRepresentationDisjunction} \ The \textbf{predicate bundle representation of the disjunction of} $\,\phi_{1}$ and $\,\phi_{2}$ is:
	\[
	\widetilde{\mathcal{P}}(\phi_{1}\veebar\phi_{2})\,=\,\left(\left.\left(\mathscr{P}_{1}\times \mathscr{P}_{2}\right)\right\vert_{r}, \mathscr{D}_1\times\mathscr{D}_2\right)
	\]
	For $x\in\mathscr{D}_1$ and $y\in\mathscr{D}_2$, the \textbf{central bundle} of $\phi_{1}\veebar\phi_{2}$ is:
	\begin{equation}
		\left[\phi_{1}(x)\veebar\phi_{2}(y)\right] \,=\,\{(p,q) \colon (p\in\mathscr{P}_{1} \,\wedge\,q\in\mathscr{P}_{2}) \,\wedge\, (\vdash p(x)\,\vee\, \vdash q(y))\} \label{DisjunctionCentralClass}
	\end{equation}
	If $\mathscr{P}_1$ and $\mathscr{P}_2$ consist only of sentences, the \textbf{central bundle} of $\phi_{1}\veebar\phi_{2}$ is:
	\begin{equation}
		\left[\phi_{1}\veebar\phi_{2}\right] \,=\,\{(p,q) \colon (p\in\mathscr{P}_{1}\,\wedge\, q\in\mathscr{P}_{2}) \,\wedge\, (\vdash p \,\vee\,\vdash q)\}
	\end{equation}
	The \textbf{degree of truth} for $\,\phi_{1}\veebar\phi_{2}$ is: $\,T(\phi_{1}(x)\veebar\phi_{2}(y)) \,= \,r(\left[\phi_{1}(x) \veebar \phi_{2}(y)\right])$ or $\,T(\phi_{1}\veebar\phi_{2}) \,= \,r(\left[\phi_{1} \veebar \phi_{2}\right])$.
\end{definition}

\begin{remark}
\ $\phi_{1}(x)\barwedge\phi_{2}(y)$ can be written as $(\phi_{1}\barwedge\phi_{2})(x,y)$ and $\phi_{1}(x)\veebar\phi_{2}(y)$ as $(\phi_{1}\veebar\phi_{2})(x,y)$.
\end{remark}

\begin{remark}
	\label{3} \ If $\phi_{1}$ and $\phi_{2}$ have the same predicate bundle, i.e. $\widetilde{\mathcal{P}}(\phi_{1})=\left(\left.\mathscr{P}_{1}\right\vert_{r_{1}}, \mathscr{D}_1\right)$ and $\widetilde{\mathcal{P}}(\phi_{2})=\left(\left.\mathscr{P}_{1}\right\vert_{r_{1}}, \mathscr{D}_1\right)$, then
	\begin{align*}
		\left[\phi_{1}(x)\barwedge\phi_{2}(y)\right] & \,=\,\{p\colon p\in\mathscr{P}_{1}\,\wedge\,(\vdash p(x) \,\wedge\,\vdash p(y))\}\,=\,\left[\phi_{1}(x)\right] \,\cap\,\left[\phi_{2}(y)\right] 
		\\
		\left[\phi_{1}(x)\veebar\phi_{2}(y)\right] & \,=\,\{p\colon p\in\mathscr{P}_{1}\,\wedge\,(\vdash p(x) \,\vee\,\vdash p(y))\}\,=\,\left[\phi_{1}(x)\right] \,\cup\,\left[\phi_{2}(y)\right] 
	\end{align*}	
\end{remark}

\begin{theorem}
	\label{13}\ For $x\in\mathscr{D}_1$ and $y\in\mathscr{D}_2$
\end{theorem}

\begin{enumerate}
	\item $\left[\phi_{1}(x)\barwedge\phi_{2}(y)\right] \,=\,\left[\phi_{1}(x)\right] \times\mathscr{P}_{2}\,\cap\,\mathscr{P}_{1}\times \left[\phi_{2}(y)\right] \,=\,\left[\phi_{1}(x)\right] \times\left[\phi_{2}(y)\right]$
	
	\item $\left[\phi_{1}(x)\veebar\phi_{2}(y)\right] \,=\,\left[\phi_{1}(x)\right] \times\mathscr{P}_{2}\,\cup\,\mathscr{P}_{1}\times \left[\phi_{2}(y)\right]$
\end{enumerate}

\begin{proof}
	\ (i) \ By definition \ref{DefCentralBundle} and (\ref{ConjunctionCentralClass})
	\begin{align*}
		\left(p,q\right) \in\left[\phi_{1}(x)\barwedge\phi_{2}(y)\right] & \,\,\Longleftrightarrow\,\, (p\in\mathscr{P}_{1}\,\wedge\, q\in\mathscr{P}_{2}) \,\wedge\, (\vdash p(x)\,\wedge\, \vdash q(y))
		\\
		& \,\,\Longleftrightarrow\,\, \left(p\in\left[\phi_{1}(x)\right] \,\wedge \,q\in\mathscr{P}_{2}\right) \,\wedge\,\left(p\in\mathscr{P}_{1} \,\wedge\,q\in\left[\phi_{2}(y)\right]\right) 
		\\
		& \,\,\Longleftrightarrow\,\, \left(p,q\right)\in\left[\phi_{1}(x)\right]\times\mathscr{P}_{2} \,\cap\, \mathscr{P}_{1}\times \left[\phi_{2}(y)\right]
		\\
		& \,\,\Longleftrightarrow\,\, \left(p,q\right) \in\left[\phi_{1}(x)\right]\times\left[\phi_{2}(y)\right]
	\end{align*}
	
	(ii) is proved by (\ref{DisjunctionCentralClass}) and is similar to (i).\smallskip
\end{proof}

\begin{corollary}
	\label{19}\ If $\mathscr{P}_1$ and $\mathscr{P}_2$ consist only of sentences, then
\end{corollary}

\begin{enumerate}
	\item $\left[\phi_{1}\barwedge\phi_{2}\right] \,=\,\left[\phi_{1}\right] \times\mathscr{P}_{2}\,\cap\,\mathscr{P}_{1}\times \left[\phi_{2}\right] \,=\,\left[\phi_{1}\right] \times\left[\phi_{2}\right]$
	
	\item $\left[\phi_{1}\veebar\phi_{2}\right] \,=\,\left[\phi_{1}\right] \times\mathscr{P}_{2}\,\cup\,\mathscr{P}_{1}\times \left[\phi_{2}\right]$
\end{enumerate}

\begin{corollary}
	\label{20}\ For $x\in\mathscr{D}_1$ and $y\in\mathscr{D}_2$
\end{corollary}

\begin{enumerate}
	\item $T(\phi_{1}(x)\barwedge\phi_{2}(y))\,\leqslant\,\min\{T(\phi_{1}(x)),\,T(\phi_{2}(y))\}$
	
	\item $\max\{T(\phi_{1}(x)),\,T(\phi_{2}(y))\}\,\leqslant\,T(\phi_{1}(x)\veebar\phi_{2}(y))\leqslant 1$
\end{enumerate}

\begin{proof}
	\ (i) \ By theorem \ref{13}(i), lemma \ref{14} and axiom \ref{17}
	\[
	\left[\phi_{1}(x)\barwedge\phi_{2}(y)\right] \,\subset\,\left[\phi_{1}(x)\right]\times\mathscr{P}_{2}\text{ \ \ and \ \ }r(\left[\phi_{1}(x)\barwedge\phi_{2}(y)\right])\leqslant r(\left[\phi_{1}(x)\right]\times\mathscr{P}_{2}) \,=\,r(\left[\phi_{1}(x)\right])
	\]
	
	Likewise, $r(\left[\phi_{1}(x)\barwedge\phi_{2}(y)\right])\leqslant r(\left[\phi_{2}(y)\right])$ and (i) follows. \medskip
	
	(ii) follows by theorem \ref{13}(ii) and is similar to (i). \smallskip
\end{proof}

\begin{corollary}
	\label{15}\ If $\mathscr{P}_1$ and $\mathscr{P}_2$ consist only of sentences, then
\end{corollary}

\begin{enumerate}
	\item $T(\phi_{1}\barwedge\phi_{2})\,\leqslant\,\min\{T(\phi_{1}),\,T(\phi_{2})\}$
	
	\item $\max\{T(\phi_{1}),\,T(\phi_{2})\}\,\leqslant\,T(\phi_{1}\veebar\phi_{2})\leqslant 1$
\end{enumerate}

Theorem \ref{13} shows that conjunction and disjunction in $\mathfrak{M}$ can be reduced to the intersection and union of central bundles which satisfy the algebra of sets. The general cases are given as follows.

\begin{definition}
	\label{DefCartesianForm} \ Suppose $\,\widetilde{\mathcal{P}}(\phi_{i})=\left(\left.\mathscr{P}_{i} \right \vert_{r_{i}},\mathscr{D}_i\right)\, \left(1\leqslant i\leqslant n\right)$ and $\,\vv{\left.\mathscr{P} \right\vert_{r}} =\left.\left(\mathscr{P}_{1}\times\cdots\times\mathscr{P}_{n}\right) \right\vert_{r}$. Then the \textbf{central bundle of the Cartesian product} \textbf{for} $\phi_{i}$ is:
	\[
	\left[\vv{\phi_{i}}(x_i)\right]\,=\,\mathscr{P}_{1}\times\cdots\times\mathscr{P}_{i-1}\times\left[\phi_{i}(x_i)\right] \times \mathscr{P}_{i+1}\times\cdots\times\mathscr{P}_{n}
	\]
	And the \textbf{central bundle of the Cartesian product} \textbf{for} $\urcorner\phi_{i}$ is:
	\[
	\left[\vv{\urcorner\phi_{i}}(x_i)\right]\,=\,\mathscr{P}_{1}\times\cdots\times\mathscr{P}_{i-1}\times\left [\urcorner\phi_{i}(x_i) \right]\times\mathscr{P}_{i+1}\times\cdots\times\mathscr{P}_{n}
	\]
	In addition, $\,\left[\vv{\phi_{i}}(x_i)\right]^{c}\,=\,\vv{\mathscr{P}}-\left[\vv{\phi_{i}}(x_i)\right]$.
\end{definition}

\begin{definition}
	\label{DefCartesianFormConjunctionDisjunction} \ Suppose everything is the same as in definition \ref{DefCartesianForm}. Then the \textbf{central bundle of the Cartesian product} \textbf{for} $\,\phi_{i_{1}}\barwedge\cdots \barwedge\phi_{i_{k}}\, (1\leqslant i_{1}<\cdots<i_{k}\leqslant n,\,2\leqslant k\leqslant n)$ is:
	\[
	\left[\phi_{i_{1}}(x_{i_1})\barwedge\cdots\barwedge\phi_{i_{k}}(x_{i_k})\right] \,=\,\left[\vv{\phi_{i_{1}}}(x_{i_1})\right]\cap\cdots\cap \left[\vv{\phi_{i_{k}}}(x_{i_k})\right]
	\]
	And the \textbf{central bundle of the Cartesian product} \textbf{for} $\,\phi_{i_{1}}\veebar\cdots\veebar\phi_{i_{k}}$ is:
	\[
	\left[\phi_{i_{1}}(x_{i_1})\veebar\cdots\veebar\phi_{i_{k}}(x_{i_k})\right] \,=\,\left[\vv{\phi_{i_{1}}}(x_{i_1})\right] \cup\cdots\cup \left[\vv{\phi_{i_{k}}}(x_{i_k})\right]
	\]
\end{definition}

\begin{lemma}
	\label{60}\qquad$\left[\vv{\urcorner\phi_{i}}(x_i)\right]\,=\,\left[\vv{\phi_{i}}(x_i)\right]^{c}\,\left(1\leqslant i\leqslant n\right)$
\end{lemma}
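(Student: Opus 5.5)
The plan is to prove the equality $\left[\vv{\urcorner\phi_{i}}(x_i)\right]=\left[\vv{\phi_{i}}(x_i)\right]^{c}$ by showing that both sides consist of exactly the same tuples. The idea is to reduce the statement about Cartesian products to the one-coordinate fact in lemma \ref{11}(i), namely $\left[\urcorner\phi_{i}(x_i)\right]=\mathscr{P}_{i}-\left[\phi_{i}(x_i)\right]$.

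First, I unfold definition \ref{DefCartesianForm}. A tuple $(p_1,\dots,p_n)$ lies in $\left[\vv{\urcorner\phi_{i}}(x_i)\right]$ iff $p_j\in\mathscr{P}_j$ for every $j\neq i$ and $p_i\in\left[\urcorner\phi_{i}(x_i)\right]$. By lemma \ref{11}(i) applied to $\phi_i$ with bundle $\left.\mathscr{P}_i\right\vert_{r_i}$, the last condition is equivalent to $p_i\in\mathscr{P}_i$ and $p_i\notin\left[\phi_{i}(x_i)\right]$.

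Next, I unfold the right-hand side. Here $\left[\vv{\phi_{i}}(x_i)\right]^{c}=\vv{\mathscr{P}}-\left[\vv{\phi_{i}}(x_i)\right]$, with $\vv{\mathscr{P}}=\mathscr{P}_1\times\cdots\times\mathscr{P}_n$. So a tuple belongs to it iff every $p_j\in\mathscr{P}_j$ and the tuple is not in $\mathscr{P}_{1}\times\cdots\times\left[\phi_{i}(x_i)\right]\times\cdots\times\mathscr{P}_{n}$.

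The one point needing care is this non-membership. Since all coordinates $j\neq i$ already lie in $\mathscr{P}_j$, the only way the tuple can fail to be in the product is $p_i\notin\left[\phi_i(x_i)\right]$. In other words, the complement in the full product equals $\mathscr{P}_1\times\cdots\times(\mathscr{P}_i-\left[\phi_i(x_i)\right])\times\cdots\times\mathscr{P}_n$; this is a standard set identity, since the other factors are the full sets.

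Comparing the two descriptions gives identical membership conditions, so the sets are equal. I do not expect a real obstacle here: the argument is a direct unfolding of definitions together with lemma \ref{11}(i).
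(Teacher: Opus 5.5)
Your proposal is correct and follows essentially the same route as the paper. The paper likewise uses definition \ref{DefCartesianForm} and lemma \ref{11}(i) to rewrite $[\vv{\urcorner\phi_{i}}(x_i)]$ as $\mathscr{P}_{1}\times\cdots\times(\mathscr{P}_{i}-[\phi_{i}(x_i)])\times\cdots\times\mathscr{P}_{n}$, and then identifies this set with $\vv{\mathscr{P}}-[\vv{\phi_{i}}(x_i)]$; you simply carry out that last set identity elementwise.
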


\begin{proof}
	\ By definition \ref{DefCartesianForm} and lemma \ref{11}
	\[
	\left[\vv{\urcorner\phi_{i}}(x_{i})\right]\,=\,\mathscr{P}_{1}\times\cdots\times\mathscr{P}_{i-1}\times\left(\mathscr{P}_{i}- \left[\phi_{i}(x_{i})\right]\right)\times\mathscr{P}_{i+1}\times\cdots\times\mathscr{P}_{n}\,=\,\vv{\mathscr{P}}- \left[\vv{\phi_{i}}(x_{i})\right] \,=\,\left[\vv{\phi_{i}}(x_{i})\right]^{c}
	\]
	\vspace{-1em}
\end{proof}

\begin{lemma}
	\label{35}\qquad$r\left(\left[\vv{\phi_{i}}(x_i)\right]\right)=\,r(\left[\phi_{i}(x_i)\right])\,\left(1\leqslant i\leqslant n\right)$
\end{lemma}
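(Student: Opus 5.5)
The plan is to unfold definition \ref{DefCartesianForm} and then apply axiom \ref{17} directly. By definition \ref{DefCartesianForm}, $\left[\vv{\phi_{i}}(x_i)\right]$ is the cylinder set
\[
\mathscr{P}_{1}\times\cdots\times\mathscr{P}_{i-1}\times\left[\phi_{i}(x_i)\right]\times\mathscr{P}_{i+1}\times\cdots\times\mathscr{P}_{n}.
\]
Here the $i$-th factor is the central bundle $\left[\phi_{i}(x_i)\right]\subset\mathscr{P}_i$ and every other factor is the full bundle. This is exactly the shape $\mathscr{P}_{1}\times\cdots\times\mathscr{I}_{i}\times\cdots\times\mathscr{P}_{n}$ treated in axiom \ref{17}, with $\mathscr{I}_i=\left[\phi_{i}(x_i)\right]$.

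The only thing to check before invoking the axiom is that $\left[\phi_{i}(x_i)\right]\in\sigma(\mathscr{P}_i)$. I will take this from the standing convention behind definition \ref{DefBelief}. There $T(\phi_i(x_i))=r_i(\left[\phi_i(x_i)\right])$ is asserted to be defined, so the central bundle must be measurable. I will state this assumption explicitly, since it is the one point not literally proved earlier. It is also the step I expect to be the main obstacle, because the paper nowhere proves measurability of central bundles. If needed, one also notes that $\left[\urcorner\phi_i(x_i)\right]=\mathscr{P}_i-\left[\phi_i(x_i)\right]$ is then measurable by closure under complement (lemma \ref{11}), though that is not required here.

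With measurability in hand, axiom \ref{17} gives
\[
r\left(\left[\vv{\phi_{i}}(x_i)\right]\right)=r(\mathscr{P}_{1})\cdots r(\left[\phi_{i}(x_i)\right])\cdots r(\mathscr{P}_{n}).
\]
Each $r(\mathscr{P}_j)=1$ by axiom \ref{10}(ii), so the product collapses to $r(\left[\phi_{i}(x_i)\right])$, where the weight on this factor is $r_i$. This completes the argument for every $1\leqslant i\leqslant n$.
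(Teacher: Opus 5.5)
Your proposal is correct and matches the paper's proof, which consists only of citing definition \ref{DefCartesianForm} and axiom \ref{17}. Your explicit assumption that $\left[\phi_{i}(x_i)\right]\in\sigma(\mathscr{P}_i)$ is taken for granted throughout the paper, since it is already needed for $T(\phi_i(x_i))$ in definition \ref{DefBelief} to be defined; stating it is a clarification, not a gap.
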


\begin{proof}
	\ By definition \ref{DefCartesianForm} and axiom \ref{17}. \smallskip
\end{proof}

\begin{theorem}
	\label{34}\ For $x\in\mathscr{D}_1$ and $y\in\mathscr{D}_2$
	\[
	T(\phi_{1}(x)\veebar\phi_{2}(y))\,=\,T(\phi_{1}(x))+T(\phi_{2}(y))-T(\phi_{1}(x)\barwedge\phi_{2}(y))
	\]
\end{theorem}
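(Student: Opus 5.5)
The idea is to carry out the inclusion--exclusion argument inside the joint bundle $\left.\left(\mathscr{P}_{1}\times\mathscr{P}_{2}\right)\right\vert_{r}$, where $r$ is a genuine weight measure in the sense of axiom \ref{10}.

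Write $A=\left[\phi_{1}(x)\right]\times\mathscr{P}_{2}$ and $B=\mathscr{P}_{1}\times\left[\phi_{2}(y)\right]$. By theorem \ref{13}, $\left[\phi_{1}(x)\veebar\phi_{2}(y)\right]=A\cup B$ and $\left[\phi_{1}(x)\barwedge\phi_{2}(y)\right]=A\cap B$. By axiom \ref{17} (equivalently lemma \ref{35} with $n=2$), $r(A)=r(\left[\phi_{1}(x)\right])=T(\phi_{1}(x))$ and $r(B)=T(\phi_{2}(y))$. The theorem therefore reduces to the set identity $r(A\cup B)=r(A)+r(B)-r(A\cap B)$.

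To prove this identity, decompose $A\cup B$ into the disjoint pieces $A$ and $B-(A\cap B)$. Countable additivity (axiom \ref{10}(iii)) gives $r(A\cup B)=r(A)+r(B-(A\cap B))$. Since $A\cap B\subset B$, lemma \ref{14}(ii) gives $r(B-(A\cap B))=r(B)-r(A\cap B)$. Substituting these into the definitions of $T(\phi_{1}(x)\veebar\phi_{2}(y))$ and $T(\phi_{1}(x)\barwedge\phi_{2}(y))$ (definitions \ref{DefSetBundleRepresentationConjunction} and \ref{DefSetBundleRepresentationDisjunction}) yields the formula.

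The only real obstacle is measurability: every set used must lie in $\sigma(\mathscr{P}_{1}\times\mathscr{P}_{2})$. The paper implicitly assumes that $r$ is defined on the central bundles, since $T$ is defined as their weight. I would state this tacit assumption: $\left[\phi_{i}\right]\in\sigma(\mathscr{P}_{i})$ and the corresponding cylinders lie in the product $\sigma$-field. Closure under complement and countable unions then makes $A\cap B$, $A\cup B$ and $B-(A\cap B)$ measurable as well.
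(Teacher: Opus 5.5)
Your proof is correct and is essentially the paper's argument. The paper also writes $\left[\phi_{1}(x)\veebar\phi_{2}(y)\right]$ as the union of the two cylinders $A$ and $B$; it gets this from definition \ref{DefCartesianFormConjunctionDisjunction} rather than theorem \ref{13}, but these are the same sets. It then splits off the disjoint piece $B-(A\cap B)$, applies lemma \ref{14}, and identifies the cylinder weights with $T(\phi_{1}(x))$ and $T(\phi_{2}(y))$ via lemma \ref{35}; your explicit measurability assumption is a sensible addition that the paper leaves tacit.
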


\begin{proof}
	\ By definition \ref{DefCartesianFormConjunctionDisjunction}
	\[
	\left[\phi_{1}(x)\veebar\phi_{2}(y)\right] \,=\,\left[\vv{\phi_{1}}(x)\right]\cup\left[\vv{\phi_{2}}(y)\right] \,=\, \left[ \vv{\phi_{1}}(x)\right]\cup\left(\left[\vv{\phi_{2}}(y)\right]-\left[\vv{\phi_{1}}(x)\right]\cap \left[\vv{\phi_{2}}(y)\right]\right)
	\]
	
	So by lemma \ref{14}
	\[
	r\left(\left[\phi_{1}(x)\veebar\phi_{2}(y)\right]\right) \,=\,r\left(\left[\vv{\phi_{1}}(x)\right]\right)+ r\left(\left[\vv{\phi_{2}}(y)\right]\right)-r\left(\left[\vv{\phi_{1}}(x)\right]\cap\left[\vv{\phi_{2}}(y)\right]\right)
	\]
	
	And it follows by lemma \ref{35}. \smallskip
\end{proof}

\begin{corollary}
	\label{25}\ If $\mathscr{P}_1$ and $\mathscr{P}_2$ consist only of sentences, then
	\[
	T(\phi_{1}\veebar\phi_{2})\,=\,T(\phi_{1})+T(\phi_{2})-T(\phi_{1}\barwedge\phi_{2})
	\]
\end{corollary}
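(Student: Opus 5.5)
The corollary is the sentence-only specialization of theorem \ref{34}, and I will prove it by running the same argument with the bracket notation for sentences. When $\mathscr{P}_1$ and $\mathscr{P}_2$ consist only of sentences, the central bundles $\left[\phi_1\right]$, $\left[\phi_2\right]$, $\left[\phi_1\barwedge\phi_2\right]$ and $\left[\phi_1\veebar\phi_2\right]$ are the special cases of definitions \ref{DefCentralBundle}, \ref{DefSetBundleRepresentationConjunction} and \ref{DefSetBundleRepresentationDisjunction} in which no free variable is instantiated. So the plan is to repeat the proof of theorem \ref{34} verbatim, using corollary \ref{19} in place of theorem \ref{13}.

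First, I will rewrite the disjunction with corollary \ref{19}(ii):
\[
\left[\phi_{1}\veebar\phi_{2}\right] \,=\,A\cup B,\qquad A=\left[\phi_{1}\right]\times\mathscr{P}_{2},\quad B=\mathscr{P}_{1}\times\left[\phi_{2}\right].
\]
Next, I will write this as the disjoint union $A\cup B = A\cup\left(B-(A\cap B)\right)$. By corollary \ref{19}(i), $A\cap B=\left[\phi_1\barwedge\phi_2\right]$. Countable additivity in axiom \ref{10}(iii) together with lemma \ref{14}(ii), applied to $A\cap B\subset B$, then gives
\[
r\left(\left[\phi_{1}\veebar\phi_{2}\right]\right)\,=\,r(A)+r(B)-r\left(\left[\phi_{1}\barwedge\phi_{2}\right]\right).
\]

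Finally, axiom \ref{17} gives $r(A)=r(\left[\phi_1\right])=T(\phi_1)$ and $r(B)=r(\left[\phi_2\right])=T(\phi_2)$, exactly as in lemma \ref{35}. Substituting these and the definitions of $T(\phi_1\veebar\phi_2)$ and $T(\phi_1\barwedge\phi_2)$ yields the identity.

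The only point that needs care is measurability. The sets $\left[\phi_i\right]$ must lie in $\sigma(\mathscr{P}_i)$ so that axiom \ref{17} applies. Then $A$, $B$, $A\cap B$ and $B-(A\cap B)$ lie in $\sigma(\mathscr{P}_1\times\mathscr{P}_2)$, which is closed under complements and countable unions, hence under finite intersections and differences. The paper tacitly assumes this whenever it writes $T(\phi)=r(\left[\phi\right])$, and I will adopt the same assumption.
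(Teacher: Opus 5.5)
Your argument is correct and is essentially the paper's own. Corollary \ref{25} is just the sentence-only case of Theorem \ref{34}, whose proof uses the same disjoint decomposition of the disjunction's central bundle (the first cylinder set together with the second cylinder set minus the intersection), then Lemma \ref{14} and the marginal property of Axiom \ref{17} (packaged as Lemma \ref{35}); citing Corollary \ref{19} instead of the Cartesian-product definitions, and making the tacit measurability assumption explicit, do not change that route.
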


\begin{definition}
	\label{DefConstantPredicates} \ Suppose $\mathbf{1}$ and $\mathbf{0}$ are propositions in $\mathfrak{M}$ where $\left[  \mathbf{1}\right]\,=\,\vv{\left.\mathscr{P}\right\vert_{r}}$ and $\left[\mathbf{0}\right]\,=\,\mathcal{\varnothing}$. Then $T(\mathbf{1})\,=\,1$ and $T(\mathbf{0})\,=\,0$.
\end{definition}

\begin{theorem}
	\label{44} \ Predicates in $\mathfrak{M}$ form a Boolean algebra under $T$ with respect to negation, conjunction and disjunction. More precisely, for $x\in\mathscr{D}_1$, $y\in\mathscr{D}_2$ and $z\in\mathscr{D}_3$
\end{theorem}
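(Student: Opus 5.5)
The plan is to lift everything to a common joint bundle and reduce each Boolean law to the corresponding identity of ordinary set algebra. Put $\vv{\left.\mathscr{P}\right\vert_{r}}=\left.\left(\mathscr{P}_{1}\times\mathscr{P}_{2}\times\mathscr{P}_{3}\right)\right\vert_{r}$. By definition \ref{DefCartesianForm}, each $\phi_i(x_i)$ is represented by the cylinder $\left[\vv{\phi_{i}}(x_i)\right]\subset\vv{\mathscr{P}}$. By lemma \ref{60}, negation corresponds to complement in $\vv{\mathscr{P}}$. By definition \ref{DefCartesianFormConjunctionDisjunction}, conjunction and disjunction correspond to intersection and union of these cylinders. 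By lemma \ref{35} and axiom \ref{17}, the truth degree computed in the big product agrees with the one computed in the original bundles. Since ``$=$ under $T$'' is exactly the equivalence $\rightleftharpoons$ of definition \ref{DefEquivalenceMultiValueLogic}, corollary \ref{36}(ii) applies: to prove a law $\Phi\rightleftharpoons\Psi$ it suffices to show that the two central bundles are equal as subsets of $\vv{\mathscr{P}}$.

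I will go through the standard list of laws in order.
\begin{enumerate}
\item Commutativity and associativity of $\barwedge,\veebar$: these follow from commutativity and associativity of $\cap,\cup$ on cylinders.
\item Distributivity, e.g. $\phi_1(x)\barwedge(\phi_2(y)\veebar\phi_3(z))\rightleftharpoons(\phi_1(x)\barwedge\phi_2(y))\veebar(\phi_1(x)\barwedge\phi_3(z))$: this follows from $A\cap(B\cup C)=(A\cap B)\cup(A\cap C)$ applied to the three cylinders.
\item Identity laws $\phi\barwedge\mathbf{1}\rightleftharpoons\phi$ and $\phi\veebar\mathbf{0}\rightleftharpoons\phi$: use definition \ref{DefConstantPredicates}, where $[\mathbf{1}]=\vv{\mathscr{P}}$ and $[\mathbf{0}]=\varnothing$.
\item Complement laws $\phi\barwedge\urcorner\phi\rightleftharpoons\mathbf{0}$ and $\phi\veebar\urcorner\phi\rightleftharpoons\mathbf{1}$: use lemma \ref{60} together with lemma \ref{11}(ii),(iii) lifted to cylinders.
\item De Morgan and double negation: these come along for free from the same set identities and corollary \ref{37}.
\end{enumerate}
In each case I write the two central bundles as set expressions in the cylinders, invoke the set identity, and then conclude equality of $T$ by corollary \ref{36}(i).

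The main obstacle is the complement law and, more generally, any law where the same predicate occurs twice, such as $\phi_1(x)\barwedge\phi_1(x)$ or $\phi_1(x)\barwedge\urcorner\phi_1(x)$. The plan needs these occurrences to be read as the \emph{same} coordinate cylinder, as in remark \ref{3}, and not as two independent factors of a product. If instead $\phi$ and $\urcorner\phi$ were placed in separate coordinates $\mathscr{P}_1\times\mathscr{P}_1$, then theorem \ref{13}(i) would give the product $[\phi(x)]\times[\urcorner\phi(x)]$. That set need not be empty, so the law would fail.

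I will therefore state explicitly that a predicate with a given bundle occupies a single coordinate of $\vv{\mathscr{P}}$, which is consistent with remark \ref{3}. Negation then acts within that coordinate via lemma \ref{60}, so $[\vv{\phi}(x)]\cap[\vv{\urcorner\phi}(x)]=\varnothing$ and $[\vv{\phi}(x)]\cup[\vv{\urcorner\phi}(x)]=\vv{\mathscr{P}}$. Idempotence $\phi\barwedge\phi\rightleftharpoons\phi$ follows from the same convention. A final check is that the choice of ambient product does not change any truth degree; this is exactly what axiom \ref{17} and lemma \ref{35} provide.
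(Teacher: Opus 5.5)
Your proposal is correct and follows essentially the paper's route. The paper also proves (i)--(v) and (viii) by reading conjunction and disjunction as intersection and union of the cylinders of definition \ref{DefCartesianFormConjunctionDisjunction}, proves (vi) by complementation via lemma \ref{60} and corollary \ref{36}, and gets (vii) from definition \ref{DefConstantPredicates}; your same-coordinate convention for repeated predicates is exactly what definition \ref{DefCartesianForm} and remark \ref{3} implicitly assume. Three small points: your list skips absorption (iv) and the annihilation laws $\phi\barwedge\mathbf{0}$, $\phi\veebar\mathbf{1}$ in (vii), though these follow from the same cylinder identities; and axiom \ref{17} with lemma \ref{35} only reconciles single-coordinate cylinders across ambient products (not sets such as $[\phi_1(x)]\times[\phi_2(y)]\times\mathscr{P}_3$), which is nonetheless enough, since every law compares two expressions in one common product or an expression against a single $T(\phi(x))$ or a constant.
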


\begin{enumerate}
	\item $T(\phi(x)\barwedge\phi(x))\,=\,T(\phi(x))$,\quad $T(\phi(x)\veebar\phi(x))\,=\,T(\phi(x))$.
	
	\item $T(\phi_{1}(x)\barwedge\phi_{2}(y))\,=\,T(\phi_{2}(y)\barwedge\phi_{1}(x))$,\quad$T(\phi_{1}(x)\veebar\phi_{2}(y)) \,=\,T(\phi_{2}(y) \veebar\phi_{1}(x))$.
	
	\item $T(\phi_{1}(x)\barwedge\left(\phi_{2}(y)\barwedge\phi_{3}(z)\right))\,=\,T(\left(\phi_{1}(x)\barwedge\phi_{2}(y) \right) \barwedge\phi_{3}(z))$,
	\\[1ex]
	$T(\phi_{1}(x)\veebar\left(\phi_{2}(y)\veebar\phi_{3}(z)\right))\,=\, T(\left(\phi_{1}(x)\veebar\phi_{2}(y) \right) \veebar\phi_{3}(z))$.
	
	\item $T(\phi_{1}(x)\barwedge\left(\phi_{1}(x)\veebar\phi_{2}(y)\right))\,=\,T(\phi_{1}(x)\veebar\left(\phi_{1}(x) \barwedge\phi_{2}(y) \right))\,=\,T(\phi_{1}(x))$.
	
	\item  $T(\phi_{1}(x)\barwedge\left(\phi_{2}(y)\veebar\phi_{3}(z)\right))\,=\,T(\left(\phi_{1}(x)\barwedge\phi_{2}(y) \right) \veebar\left(\phi_{1}(x)\barwedge\phi_{3}(z)\right))$, 
	\\[1ex]
	$T(\phi_{1}(x)\veebar \left(\phi_{2}(y)\barwedge\phi_{3}(z)\right))\,=\, T(\left(\phi_{1}(x)\veebar\phi_{2}(y)\right)\barwedge \left(\phi_{1}(x)\veebar\phi_{3}(z)\right))$.
	
	\item $T(\urcorner\left(\phi_{1}(x)\barwedge\phi_{2}(y)\right))\,=\,T(\urcorner\phi_{1}(x)\,\veebar\,\urcorner \phi_{2}(y))$, \quad $T(\urcorner\left(\phi_{1}(x)\veebar\phi_{2}(y)\right))\,=\,T(\urcorner\phi_{1}(x)\,\barwedge\, \urcorner\phi_{2}(y))$.
	
	\item $T(\phi(x)\barwedge\mathbf{1})\,=\,T(\phi(x)\veebar\mathbf{0})\,=\,T(\phi(x))$,\quad$T(\phi(x)\barwedge\mathbf{0}) \,=\,0$,\quad$T(\phi(x)\veebar\mathbf{1})\,=\,1$.
	
	\item $T(\phi(x)\,\veebar\,\urcorner\phi(x))\,=\,1$, \quad$T(\phi(x)\,\barwedge\,\urcorner\phi(x))\,=\,0$.
\end{enumerate}

\begin{proof}
	\ (i) to (v) follow by definition \ref{DefCartesianFormConjunctionDisjunction}. \medskip
	
	(vi) \ We only prove the first one. By definition \ref{DefCartesianFormConjunctionDisjunction} and lemma \ref{60}
	\[
	\left[\urcorner\left(\phi_{1}(x)\barwedge\phi_{2}(y)\right)\right]\,=\,\left(\left[\vv{\phi_{1}}(x)\right]\cap \left[\vv{\phi_{2}}(y)\right]\right)^{c}\,=\,\left[\vv{\phi_{1}}(x)\right]^{c}\cup\left[\vv{\phi_{2}}(y)\right]^{c}\,=\, \left[  \urcorner\phi_{1}(x)\,\veebar\,\urcorner\phi_{2}(y)\right]
	\]
	
	So (vi) follows by corollary \ref{36}. \medskip
	
	(vii) \ By definition \ref{DefConstantPredicates}
	\[
	\left[\phi(x)\barwedge\mathbf{1}\right] \,=\,\left[\vv{\phi}(x)\right]\cap\vv{\mathscr{P}}\,=\,\left[\vv{\phi}(x)\right] \quad \text{and}\quad\left[\phi(x)\veebar\mathbf{0}\right]\,=\,\left[\vv{\phi}(x)\right]\cup\varnothing\,=\, \left[\vv{\phi}(x)\right]
	\]	
	
	Also, $\left[\phi(x)\barwedge\mathbf{0}\right] \,=\,\varnothing\,$ and $\,\left[\phi(x)\veebar\mathbf{1}\right] \,=\, \vv{\mathscr{P}}$. So (vii) follows. \medskip
	
	(viii) \ By definition \ref{DefCartesianFormConjunctionDisjunction} and lemma \ref{60}. \bigskip
\end{proof}

The following gives a predicate bundle in which only one predicate can be satisfied for a value.

\begin{problem}
	\label{24}\ Suppose $\phi_{0}(x_{0})\rightleftharpoons\,$\textquotedblleft$\,$the gender of $x_{0}$\textquotedblright, $\widetilde{\mathcal{P}}(\phi_{0})=\left(\left.\mathscr{P}_{0}\right\vert_{r_{0}}, \{\text{`M'},\, \text{`F'}\}\right)$ and $\allowbreak\left.\mathscr{P}_{0}\right\vert_{r_{0}}=\,\{p(u)\colon (p(u)\Leftrightarrow ``u=\text{`M'}\:" \,\vee\,p(u) \Leftrightarrow ``u= \text{`F'}\:") \,\wedge\, r_{0}(p)=0.5\,\}$. Find $T(\phi_{0}(\text{`M'}))$ and $T(\phi_{0}(\text{`F'}))$.
\end{problem}

\begin{proof}
	[Solution]\ Obviously, the central bundle $\left[\phi_{0}(\text{`M'})\right] =\{``u=\text{`M'}\,"\}$. So $T(\phi_{0}(\text{`M'}))=0.5$. Likewise, $\left[\phi_{0}(\text{`F'})\right] =\{``u=\text{`F'}\,"\}$ and $T(\phi_{0}($`F'$))=0.5$. 
\end{proof}

\begin{problem}
	\label{2}\ \ Suppose $\phi_{0}(x_{0})$ and $\widetilde{\mathcal{P}}(\phi_{0})$ are given in problem \ref{24}, $\phi_{1}(x_{1})\rightleftharpoons\,$\textquotedblleft$\,x_{1}$ is young\textquotedblright\ and $\widetilde{\mathcal{P}}(\phi_{1})=\left(\left.\mathscr{P}_{1}\right\vert_{r_{1}},[0,150]\right)$ where  $\left.\mathscr{P}_{1}\right\vert_{r_{1}}$ is given in problem \ref{16}. Find $T(\phi_{0}($`M'$) \,\barwedge\,\phi_{1}(x_{1}))$ and $T(\phi_{0}($`F'$)\,\barwedge\,\phi_{1}(x_{1}))$ for $x_{1}\,=\,20,\,30,\,40\left(\text{years old}\right)$.
\end{problem}

\begin{proof}
	[Solution]\ Let $\left.\left(\mathscr{P}_{0}\times\mathscr{P}_{1}\right)\right\vert_{r_{0,1}}= \,\{\left(p_{k},q_{n}\right) \colon p_{k}\in\mathscr{P}_{0}\,\wedge\,p_1\Leftrightarrow``u=\text{`M'}\," \,\wedge\, p_2\Leftrightarrow``u=\text{`F'}\,"\,\wedge\,q_{n}\in\mathscr{P}_{1}\,\wedge\, r_{0,1}(\left(p_{k},q_{n}\right))\,\wedge\, 1\leqslant k \leqslant2 \,\wedge\,1\leqslant n\leqslant20\}$ where
	\[
	r_{0,1}\left(\left(p_1,q_{n}\right)\right) \,=\,r_{0,1}\left(\left(p_2,q_{n}\right)\right) \,=\,r_{0} (p_1)\,r_{1}(q_{n})\,=\,r_{0}(p_2)\,r_{1}(q_{n})\,=\,0.025
	\]
	
	By theorem \ref{19}(i) and problem \ref{16}, we have \medskip
	
	$\left[\phi_{0}(\text{`M'})\,\barwedge\,\phi_{1}(20)\right] \,=\,\{p_1\}\times\mathscr{P}_{1}$ \ and \ $T(\phi_{0}($`M'$) \,\barwedge\,\phi_{1}(20))\,=\,\,0.5.$ \smallskip
	
	$\left[\phi_{0}(\text{`M'})\,\barwedge\,\phi_{1}(40)\right] \,=\,\{p_1\}\times\varnothing\,=\,\varnothing\ \ $and \ $T(\phi_{0}($`M'$)\,\barwedge\,\phi_{1}(40))\,=\,0.$ \smallskip
	
	$\left[\phi_{0}(\text{`M'})\,\barwedge\,\phi_{1}(30)\right] \,=\,\{\left(p_1,q_{n}\right) \colon q_{n}\in \mathscr{P}_{1} \,\wedge\,11\leqslant n\leqslant20\}\ \ $and \ $T(\phi_{0}($`M'$)\,\barwedge\,\phi_{1}(30))\,=\,0.25.$ \medskip
	
	Since $r_{0}(p_1)=\,r_{0}(p_2)\,=\,0.5$, $\ T(\phi_{0}($`F'$)\,\barwedge\, \phi_{1}(x_{1})) \,=\, T(\phi_{0}($`M'$) \,\barwedge\,\phi_{1}(x_{1}))$. \smallskip
\end{proof}

\begin{problem}
	\label{26} \ Suppose $\phi_{0}(x_{0})$ and $\widetilde{\mathcal{P}}(\phi_{0})$ are given in problem \ref{24}, $\phi_{2}(x_{2})\rightleftharpoons\,$\textquotedblleft$\,x_{2}$ is tall\textquotedblright\ and $\widetilde{\mathcal{P}} (\phi_{2})=\left(\left.\mathscr{P}_{2}\right\vert_{r_{2}},[0,3]\right)$ where $\left.\mathscr{P}_{2}\right\vert_{r_{2}}$ is given in problem \ref{32}. Find $T(\phi_{0}($`M'$)\,\barwedge\,\phi_{2}(x_{2}))$ and $T(\phi_{0}($`F'$)\,\barwedge\,\phi_{2}(x_{2}))$ for $x_{2}\,=\,1.7,\,1.75,\,1.8\left(m\right)$.
\end{problem}

\begin{proof}
	[Solution]\ Because of the difference in height between men and women, weight measures for males and females are different. So let $\left.\left(\mathscr{P}_{0}\times\mathscr{P}_{2}\right) \right\vert_{r_{0,2}}=\,\{\left(p_{k},q_{n} \right) \colon p_{k}\in \mathscr{P}_{0}\,\wedge\,p_1\Leftrightarrow``u=\text{`M'}\," \,\wedge\, p_2\Leftrightarrow``u=\text{`F'}\,"\, \wedge\,q_{n}\in\mathscr{P}_{2}\,\wedge\,r_{0,2} \left(\left(p_{k},q_{n} \right)\right)\,\wedge\, 1\leqslant k \leqslant2 \,\wedge\,1\leqslant n\leqslant20\}$ where
	\begin{align*}
		r_{0,2}\left(\left(p_1,q_{n}\right)\right) & \,=\, 0.05\text{, \ \ }11\leqslant n\leqslant20\quad\left(1.76 \leqslant \alpha(n)\leqslant1.85\right) 
		\\
		r_{0,2}\left(\left(p_1,q_{n}\right)\right) & \,=\, 0 \text{, \ \ \ \quad}1\leqslant n\leqslant10\quad \left(1.66 \leqslant \alpha(n)\leqslant1.75\right) 
		\\
		r_{0,2}\left(\left(p_2,q_{n}\right)\right) & \,=\, 0\text{, \ \ \ \quad}11\leqslant n\leqslant20\quad \left(1.76 \leqslant \alpha(n)\leqslant1.85\right) 
		\\
		r_{0,2}\left(\left(p_2,q_{n}\right)\right) & \,=\, 0.05\text{, \ \ }1\leqslant n\leqslant10\text\quad \left(1.66 \leqslant \alpha(n)\leqslant 1.75\right)
	\end{align*}
	
	By problem \ref{32}, we have \medskip
	
	$\left[\phi_{0}(\text{`M'})\,\barwedge\,\phi_{2}(1.7)\right] \,=\,\{\left(p_1,q_{n}\right) \colon q_{n}\in \mathscr{P}_{2} \,\wedge\,1\leqslant n\leqslant5\}\:$ and $\:T(\phi_{0}(\text{`M'})\,\barwedge\,\phi_{2}(1.7))\,=\,0$. \smallskip
	
	$\left[\phi_{0}(\text{`M'})\,\barwedge\,\phi_{2}(1.75)\right]\,=\,\{\left(p_1,q_{n}\right) \colon q_{n}\in \mathscr{P}_{2} \,\wedge\,1\leqslant n\leqslant10\}\:$ and $\:T(\phi_{0}(\text{`M'})\,\barwedge\,\phi_{2}(1.75))\,=\,0$. \smallskip
	
	$\left[\phi_{0}(\text{`M'})\,\barwedge\,\phi_{2}(1.8)\right] \,=\,\{\left(p_1,q_{n}\right) \colon q_{n}\in \mathscr{P}_{2} \,\wedge\,1\leqslant n\leqslant15\}\:$ and $\:T(\phi_{0}(\text{`M'})\,\barwedge\,\phi_{2}(1.8)) \,=\,0.25$. \medskip
	
	Likewise \medskip
	
	$\left[\phi_{0}(\text{`F'})\,\barwedge\,\phi_{2}(1.7)\right] \,=\,\{\left(p_2,q_{n}\right) \colon q_{n}\in \mathscr{P}_{2} \,\wedge\,1\leqslant n\leqslant5\}\:$ and $\:T(\phi_{0}(\text{`F'})\,\barwedge\,\phi_{2}(1.7)) \,=\,0.25$. \smallskip
	
	$\left[\phi_{0}(\text{`F'})\,\barwedge\,\phi_{2}(1.75)\right]\,=\,\{\left(p_2,q_{n}\right) \colon q_{n}\in \mathscr{P}_{2} \,\wedge\,1\leqslant n\leqslant10\}\:$ and $\:T(\phi_{0}(\text{`F'})\,\barwedge\,\phi_{2}(1.75)) \,=\,0.5$. \smallskip
	
	$\left[\phi_{0}(\text{`F'})\,\barwedge\,\phi_{2}(1.8)\right] \,=\,\{\left(p_2,q_{n}\right) \colon q_{n}\in \mathscr{P}_{2} \,\wedge\,1\leqslant n\leqslant15\}\:$ and $\:T(\phi_{0}(\text{`F'})\,\barwedge\,\phi_{2}(1.8)) \,=\,0.5$. 
\end{proof}

\subsection{Implication and Irrelevancy}

The implication in $\mathfrak{M}$ can model the connective \textit{if} in linguistics, e.g. \textquotedblleft if he is big, he is heavy\textquotedblright, \textquotedblleft if he is quick, he will catch the bus\textquotedblright, and so on. Since a predicate bundle generalizes a probability space, the implication in $\mathfrak{M}$ is closely tied to the conditional probability, $P(B/A)\,=\,P(A\cap B)/P(A)$ in which samples of $A\cap B$ are from $\Omega$ but samples of $B/A$ are from $A$. As the result, $\phi_{1}\barwedge\phi_{2}$ involves the whole bundle, while $\phi_{1}\rightharpoonup\phi_{2}$ only refers to the central bundle of $\phi_{1}$. So we have the following definition.

\begin{definition}
	\ For $x\in\mathscr{D}_1$ and $y\in\mathscr{D}_2$, the \textbf{degree of truth for implication} $\phi_{1}\rightharpoonup\phi_{2}$ is defined as:
	\begin{align}
		T(\phi_{1}(x)\rightharpoonup\phi_{2}(y))  & \,=\, \frac{T(\phi_{1}(x)\barwedge\phi_{2}(y))}{T(\phi_{1}(x))},\,\qquad T(\phi_{1}(x)) \neq 0\label{DegreeTruthImplication}\\
		& \,=\, 1,\qquad\qquad\qquad\qquad\quad T(\phi_{1}(x)) =0\nonumber
	\end{align}
	If $\mathscr{P}_1$ and $\mathscr{P}_2$ consist only of sentences, the \textbf{degree of truth for implication} $\phi_{1}\rightharpoonup\phi_{2}$ is:
	\begin{align}
		T(\phi_{1}\rightharpoonup\phi_{2})  & \,=\, \frac{T(\phi_{1}\barwedge\phi_{2})}{T(\phi_{1})},\qquad\qquad T(\phi_{1}) \neq 0\label{DegreeTruthImplicationSentence}\\
		& \,=\, 1,\:\qquad\qquad\qquad\qquad T(\phi_{1}) =0\nonumber
	\end{align}	
\end{definition}

\begin{remark}
	\ The second condition in (\ref{DegreeTruthImplication}) and (\ref{DegreeTruthImplicationSentence}) makes it consistent with the implication in propositional logic. Also, note that there is no central bundle for implication in general.
\end{remark}

The consistency of (\ref{DegreeTruthImplication}) and (\ref{DegreeTruthImplicationSentence}) are in the following lemmas.

\begin{lemma}
	\ For $x\in\mathscr{D}_1$ and $y\in\mathscr{D}_2$
	\[
	T(\phi_{1}(x)\rightharpoonup\phi_{2}(y))\leqslant 1
	\]
\end{lemma}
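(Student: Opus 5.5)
The plan is to split according to the two cases in the definition (\ref{DegreeTruthImplication}) and to reduce the nontrivial case to the bound already obtained in corollary \ref{20}(i).

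\emph{Case $T(\phi_{1}(x))=0$.} Here $T(\phi_{1}(x)\rightharpoonup\phi_{2}(y))=1$ by definition, so the inequality holds with equality and nothing more is needed.

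\emph{Case $T(\phi_{1}(x))\neq 0$.} By definition \ref{DefBelief} and axiom \ref{10}(i), together with lemma \ref{14}(iii) applied to $\varnothing\subset\left[\phi_{1}(x)\right]$, we have $T(\phi_{1}(x))=r(\left[\phi_{1}(x)\right])\geqslant 0$. Combined with the case hypothesis this gives $T(\phi_{1}(x))>0$, so the quotient in (\ref{DegreeTruthImplication}) is well defined and dividing by $T(\phi_{1}(x))$ preserves inequalities.

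Next I will invoke corollary \ref{20}(i), which rests on theorem \ref{13}(i), axiom \ref{17} and lemma \ref{14}(iii):
\[
T(\phi_{1}(x)\barwedge\phi_{2}(y))\,\leqslant\,\min\{T(\phi_{1}(x)),\,T(\phi_{2}(y))\}\,\leqslant\,T(\phi_{1}(x)).
\]
Dividing through by the positive number $T(\phi_{1}(x))$ then yields
\[
T(\phi_{1}(x)\rightharpoonup\phi_{2}(y))\,=\,\frac{T(\phi_{1}(x)\barwedge\phi_{2}(y))}{T(\phi_{1}(x))}\,\leqslant\,1 .
\]
The same argument works verbatim for sentences: use (\ref{DegreeTruthImplicationSentence}) in place of (\ref{DegreeTruthImplication}) and corollary \ref{15}(i) in place of corollary \ref{20}(i).

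There is no real obstacle. The only point needing care is checking that the denominator is strictly positive rather than merely nonzero, since a negative denominator would reverse the inequality; nonnegativity of the weight measure settles this. As a side remark, the same reasoning with lemma \ref{14} shows that the numerator is also nonnegative, so the implication in fact has a degree of truth in $[0,1]$.
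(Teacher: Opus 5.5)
Your proposal is correct and matches the paper's proof, which is just the one-line appeal to corollary \ref{20}(i) combined with the definition (\ref{DegreeTruthImplication}). Your explicit check that the denominator is strictly positive, using nonnegativity of the weight measure, is a detail the paper leaves implicit.
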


\begin{proof}
	\ By corollary \ref{20}(i) and (\ref{DegreeTruthImplication}).
\end{proof}

\begin{lemma}
	\ If $\mathscr{P}_1$ and $\mathscr{P}_2$ consist only of sentences, then
	\[
	T(\phi_{1}\rightharpoonup\phi_{2})\leqslant 1
	\]
\end{lemma}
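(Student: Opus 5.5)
The argument mirrors the preceding lemma for predicates, using the sentence versions of the earlier results in place of the predicate versions. I split according to the two cases in (\ref{DegreeTruthImplicationSentence}).

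If $T(\phi_{1})=0$, then $T(\phi_{1}\rightharpoonup\phi_{2})=1$ by definition, and the inequality holds trivially.

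If $T(\phi_{1})\neq0$, then $T(\phi_{1})>0$, because $T(\phi_{1})=r([\phi_{1}])\geqslant0$ by axiom \ref{10} (nonnegativity together with countable additivity, as already used in lemma \ref{14}). Corollary \ref{15}(i) gives
\[
T(\phi_{1}\barwedge\phi_{2})\,\leqslant\,\min\{T(\phi_{1}),\,T(\phi_{2})\}\,\leqslant\,T(\phi_{1}).
\]
Dividing by the positive number $T(\phi_{1})$ yields
\[
T(\phi_{1}\rightharpoonup\phi_{2})\,=\,\frac{T(\phi_{1}\barwedge\phi_{2})}{T(\phi_{1})}\,\leqslant\,1.
\]

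The only step needing any care is corollary \ref{15}(i) itself. Corollary \ref{19}(i) gives $[\phi_{1}\barwedge\phi_{2}]=[\phi_{1}]\times[\phi_{2}]\subset[\phi_{1}]\times\mathscr{P}_{2}$. Monotonicity from lemma \ref{14}(iii) and the marginal condition of axiom \ref{17} then give $r([\phi_{1}\barwedge\phi_{2}])\leqslant r([\phi_{1}])$. This is exactly the same computation as in corollary \ref{20}, with the central bundles defined via $\vdash p$ rather than $\vdash p(x)$.

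For completeness one can also note that the quotient is nonnegative, so $0\leqslant T(\phi_{1}\rightharpoonup\phi_{2})\leqslant1$.
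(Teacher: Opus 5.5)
Your proof is correct and matches the paper's, which derives the bound directly from corollary \ref{15}(i) together with the definition (\ref{DegreeTruthImplicationSentence}). Your explicit check that $T(\phi_{1})>0$ in the nontrivial case, so that dividing preserves the inequality, is a detail the paper leaves implicit.
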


\begin{proof}
	\ By corollary \ref{15}(i) and (\ref{DegreeTruthImplicationSentence}).
\end{proof}

\begin{corollary}
	\ For $x\in\mathscr{D}$
\end{corollary}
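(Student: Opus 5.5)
The items of this corollary are not shown above, so I assume they are the basic identities for implication with a single argument $x\in\mathscr{D}$. The candidates I will treat are:
\[
T(\phi(x)\rightharpoonup\phi(x))=1,\qquad
T(\phi(x)\rightharpoonup\urcorner\phi(x))=0 \ \text{ when } T(\phi(x))\neq0,
\]
\[
T(\mathbf{1}\rightharpoonup\phi(x))=T(\phi(x)),\qquad
T(\mathbf{0}\rightharpoonup\phi(x))=1,\qquad
T(\phi(x)\rightharpoonup\mathbf{1})=1.
\]
If the actual list differs, the same method still applies.

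The approach is uniform. I first split on whether the antecedent has truth degree $0$. If it does, the second clause of (\ref{DegreeTruthImplication}) gives the value $1$ immediately. This settles $T(\mathbf{0}\rightharpoonup\phi(x))=1$ outright, since $T(\mathbf{0})=0$ by definition \ref{DefConstantPredicates}. It also settles the degenerate case of every other item where $T(\phi(x))=0$. If the antecedent has nonzero degree, I use the quotient form of (\ref{DegreeTruthImplication}), which reduces each item to computing the numerator $T(\cdot\barwedge\cdot)$.

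For the numerators I rely on theorem \ref{44}.
\begin{itemize}
\item Item (i) of theorem \ref{44} gives $T(\phi(x)\barwedge\phi(x))=T(\phi(x))$, so the quotient is $1$.
\item Item (viii) gives $T(\phi(x)\barwedge\urcorner\phi(x))=0$, so the quotient is $0$.
\item Item (vii), together with commutativity (item (ii)), gives $T(\mathbf{1}\barwedge\phi(x))=T(\phi(x))$. Since $T(\mathbf{1})=1$, this yields $T(\mathbf{1}\rightharpoonup\phi(x))=T(\phi(x))$.
\item The same items give $T(\phi(x)\barwedge\mathbf{1})=T(\phi(x))$, so $T(\phi(x)\rightharpoonup\mathbf{1})=1$.
\end{itemize}
If the corollary also asserts a lower bound, it follows from $T(\cdot)\geqslant0$ by axiom \ref{10}(i), together with the preceding lemma $T(\phi_{1}(x)\rightharpoonup\phi_{2}(y))\leqslant1$.

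The main obstacle is the self-conjunction $\phi(x)\barwedge\phi(x)$. By definition \ref{DefSetBundleRepresentationConjunction}, its representation lives on the joint bundle $\mathscr{P}\times\mathscr{P}$. Read literally, its central bundle there is $[\phi(x)]\times[\phi(x)]$, whose weight is not obviously $r([\phi(x)])$. I would resolve this the way theorem \ref{44}(i) does. When both arguments share the same bundle, remark \ref{3} identifies the central bundle of the conjunction with $[\phi(x)]\cap[\phi(x)]=[\phi(x)]$. Then corollary \ref{36} transfers equality of central bundles to equality of truth degrees.

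I expect the $\urcorner\phi$ item to raise the same issue. There remark \ref{3} together with lemma \ref{11}(ii) gives $[\phi(x)]\cap[\urcorner\phi(x)]=\varnothing$, so the numerator has weight $r(\varnothing)=0$ by lemma \ref{14}(i).
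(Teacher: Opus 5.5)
Your proposal is correct and matches the paper's proof, which is simply ``by (\ref{DegreeTruthImplication}) and theorem \ref{44}'': split on whether the antecedent has degree $0$, and otherwise evaluate the numerator using theorem \ref{44}(i), (ii), (vii). The one item you did not guess is the third, $T(\phi(x))\neq0\Longrightarrow T(\phi(x)\rightharpoonup\mathbf{0})=0$ (your $\phi(x)\rightharpoonup\urcorner\phi(x)$ item is not part of the statement), and it follows in exactly the same way from $T(\phi(x)\barwedge\mathbf{0})=0$ in theorem \ref{44}(vii).
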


\begin{enumerate}
	\item $T(\phi(x)\rightharpoonup\mathbf{1})\,=\,T(\mathbf{0}\rightharpoonup \phi(x))\,=\,T(\phi(x)\rightharpoonup \phi(x))\,=\,1$
	
	\item $T(\mathbf{1}\rightharpoonup\phi(x))\,=\,T(\phi(x))$
	
	\item $T(\phi(x))\neq 0\,\,\Longrightarrow\,\,T(\phi(x)\rightharpoonup \mathbf{0})\,=\,0$
\end{enumerate}

\begin{proof}
	\ By (\ref{DegreeTruthImplication}) and theorem \ref{44}. \medskip
\end{proof}

(\ref{DegreeTruthImplication}) also implies modus ponens in propositional logic.

\begin{corollary}
	\label{41} \ For $x\in\mathscr{D}_1$ and $y\in\mathscr{D}_2$
	\[
	\left(T(\phi_{1}(x))=1\,\wedge\,T(\phi_{1}(x)\rightharpoonup\phi_{2}(y))=1\right) \,\Longrightarrow\,T(\phi_{2}(y))=1
	\]
\end{corollary}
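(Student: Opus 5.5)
Since $T(\phi_{1}(x))=1\neq 0$, only the first branch of (\ref{DegreeTruthImplication}) applies. Substituting both hypotheses into that branch gives
\[
1\,=\,T(\phi_{1}(x)\rightharpoonup\phi_{2}(y))\,=\,\frac{T(\phi_{1}(x)\barwedge\phi_{2}(y))}{1},
\]
so $T(\phi_{1}(x)\barwedge\phi_{2}(y))=1$.

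The remaining step is to pass from the conjunction to $\phi_{2}(y)$ itself. Corollary \ref{20}(i) gives
\[
1\,=\,T(\phi_{1}(x)\barwedge\phi_{2}(y))\,\leqslant\,\min\{T(\phi_{1}(x)),\,T(\phi_{2}(y))\}\,\leqslant\,T(\phi_{2}(y)).
\]
On the other hand, $T(\phi_{2}(y))=r(\left[\phi_{2}(y)\right])\leqslant 1$ by definition \ref{DefBelief} and lemma \ref{14}(iv). Combining the two inequalities yields $T(\phi_{2}(y))=1$.

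There is no real obstacle here; the only care needed is to note that the hypothesis $T(\phi_{1}(x))=1$ excludes the degenerate branch of (\ref{DegreeTruthImplication}). Alternatively, one could argue through theorem \ref{13}(i) and axiom \ref{17}: the conjunction's central bundle $\left[\phi_{1}(x)\right]\times\left[\phi_{2}(y)\right]$ sits inside $\mathscr{P}_{1}\times\left[\phi_{2}(y)\right]$, whose weight is $r(\left[\phi_{2}(y)\right])$. The corollary \ref{20} route is shorter, so I would use it.
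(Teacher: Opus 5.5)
Your proposal is correct and follows the paper's own proof: the paper likewise reads off $T(\phi_{1}(x)\barwedge\phi_{2}(y))=T(\phi_{1}(x))=1$ from (\ref{DegreeTruthImplication}) and then uses $T(\phi_{2}(y))\geqslant T(\phi_{1}(x)\barwedge\phi_{2}(y))$. Your appeal to lemma \ref{14}(iv) for the upper bound $T(\phi_{2}(y))\leqslant 1$ just states explicitly a step the paper leaves implicit.
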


\begin{proof}
	\ By (\ref{DegreeTruthImplication}), $T(\phi_{1}(x)\barwedge\phi_{2}(y))\,=\,T(\phi_{1}(x))\,=\,1$. So $T(\phi_{2}(y))\,\geqslant\, T(\phi_{1}(x) \barwedge\phi_{2}(y))\,=\,1.$ \smallskip
\end{proof}

\begin{theorem}
	\ Suppose $\varphi\in\mathfrak{M}$. For $x\in\mathscr{D}_1$ and $y\in\mathscr{D}_2$, $\,T(\phi(x))\neq0$ and $T(\varphi(y))\neq0$.
\end{theorem}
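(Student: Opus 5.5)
The statement, as worded, has no conclusion beyond the two conditions $T(\phi(x))\neq0$ and $T(\varphi(y))\neq0$. These cannot be derived for arbitrary $x\in\mathscr{D}_1$, $y\in\mathscr{D}_2$: in problem \ref{16} we found $T(\phi_1(40))=0$. So the only correct reading is that the theorem fixes a standing regime, and my plan is to prove what that regime guarantees. Namely, I will show that both $T(\phi(x)\rightharpoonup\varphi(y))$ and $T(\varphi(y)\rightharpoonup\phi(x))$ are governed by the quotient branch of (\ref{DegreeTruthImplication}), and that the objects entering these quotients are well behaved.

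\emph{Step 1.} I unwind the hypothesis through definition \ref{DefBelief}. Since $T(\phi(x))=r([\phi(x)])$ and $r(\varnothing)=0$ by lemma \ref{14}(i), the condition $T(\phi(x))\neq0$ forces $[\phi(x)]\neq\varnothing$. Hence there is some $p\in\mathscr{P}_1$ with $\vdash p(x)$. The same argument applied to $\varphi$ gives some $q\in\mathscr{P}_2$ with $\vdash q(y)$. Combined with lemma \ref{14}(iv) and axiom \ref{10}(i), this yields $0<T(\phi(x))\leqslant1$ and $0<T(\varphi(y))\leqslant1$.

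\emph{Step 2.} Because the denominators are nonzero, both implications are given by the first branch of (\ref{DegreeTruthImplication}), so neither falls into the default value $1$:
\[
T(\phi(x)\rightharpoonup\varphi(y))=\frac{T(\phi(x)\barwedge\varphi(y))}{T(\phi(x))},\qquad T(\varphi(y)\rightharpoonup\phi(x))=\frac{T(\varphi(y)\barwedge\phi(x))}{T(\varphi(y))}.
\]

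\emph{Step 3.} I show both quotients lie in $[0,1]$. The numerators are nonnegative by axiom \ref{10}. They are bounded by their denominators by corollary \ref{20}(i), which gives $T(\phi(x)\barwedge\varphi(y))\leqslant\min\{T(\phi(x)),T(\varphi(y))\}$. Theorem \ref{44}(ii) identifies the two numerators, so both implications share the common quantity $T(\phi(x)\barwedge\varphi(y))$. Lemma \ref{35} and theorem \ref{13}(i) express that quantity as the weight of $[\vv{\phi}(x)]\cap[\vv{\varphi}(y)]$ in the joint bundle.

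The main obstacle is interpretive rather than computational. The statement asserts no conclusion beyond these conditions, so I will deliberately limit the proof to the well-definedness and range facts above, which follow directly from the nonvanishing. I will not attempt to derive the nonvanishing itself, since problem \ref{16} shows it is false in general.
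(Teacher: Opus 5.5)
\noindent You are right that the two displayed conditions are hypotheses rather than conclusions, and that they cannot be derived (problem \ref{16} gives $T(\phi_1(40))=0$). But the theorem is not empty. As with lemma \ref{14} and theorems \ref{13} and \ref{44}, the paper states the conclusions in the enumeration immediately after the theorem environment. Under these hypotheses they are the analogues of the law of total probability and Bayes' rule:
\begin{enumerate}
\item $T(\phi(x))=T(\phi(x)\barwedge\varphi(y))+T(\phi(x)\barwedge\urcorner\varphi(y))$;
\item $T(\phi(x)\rightharpoonup\varphi(y))+T(\phi(x)\rightharpoonup\urcorner\varphi(y))=1$;
\item $T(\phi(x))=T(\varphi(y)\rightharpoonup\phi(x))\,T(\varphi(y))+T(\urcorner\varphi(y)\rightharpoonup\phi(x))\,T(\urcorner\varphi(y))$;
\item $T(\phi(x)\rightharpoonup\varphi(y))=\dfrac{T(\varphi(y)\rightharpoonup\phi(x))\,T(\varphi(y))}{T(\varphi(y)\rightharpoonup\phi(x))\,T(\varphi(y))+T(\urcorner\varphi(y)\rightharpoonup\phi(x))\,T(\urcorner\varphi(y))}$.
\end{enumerate}
Your Steps 1--3 are correct, and the commutativity remark in Step 3 is one of the ingredients. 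However, they prove none of these four identities, so the proposal leaves the entire content of the theorem unaddressed.

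The missing idea is the decomposition behind (i). The paper obtains it from theorem \ref{44}: $\phi(x)\rightleftharpoons\phi(x)\barwedge\mathbf{1}\rightleftharpoons\phi(x)\barwedge(\varphi(y)\veebar\urcorner\varphi(y))\rightleftharpoons(\phi(x)\barwedge\varphi(y))\veebar(\phi(x)\barwedge\urcorner\varphi(y))$. The conjunction of the two disjuncts is $\rightleftharpoons\phi(x)\barwedge\mathbf{0}\rightleftharpoons\mathbf{0}$, so theorem \ref{34} gives (i). At the bundle level this is just additivity: by theorem \ref{13}(i) and lemma \ref{11}, the sets $[\phi(x)]\times[\varphi(y)]$ and $[\phi(x)]\times(\mathscr{P}_2-[\varphi(y)])$ are disjoint with union $[\phi(x)]\times\mathscr{P}_2$, whose weight is $T(\phi(x))$ by axiom \ref{17}.

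The remaining items follow from (i):
\begin{enumerate}
\item[] \emph{(ii)} Divide (i) by $T(\phi(x))$. This, rather than mere well-definedness, is where $T(\phi(x))\neq0$ matters: if $T(\phi(x))=0$, both implications default to $1$ and the sum would be $2$.
\item[] \emph{(iii)} Rewrite each summand of (i) using theorem \ref{44}(ii) and the identity $T(\psi_1\rightharpoonup\psi_2)\,T(\psi_1)=T(\psi_1\barwedge\psi_2)$. This identity follows from (\ref{DegreeTruthImplication}) and holds even when $T(\psi_1)=0$, since then both sides vanish by corollary \ref{20}(i). You need that case for the $\urcorner\varphi(y)$ term, because the hypotheses do not make $T(\urcorner\varphi(y))$ nonzero.
\item[] \emph{(iv)} Write $T(\phi(x)\rightharpoonup\varphi(y))=T(\varphi(y)\barwedge\phi(x))/T(\phi(x))$. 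Rewrite the numerator by the same identity and the denominator by (iii).
\end{enumerate}
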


\begin{enumerate}
	\item $T(\phi(x))\,=\,T(\phi(x)\barwedge\varphi(y))+T(\phi(x)\,\barwedge\,\urcorner\varphi(y))$
	
	\item $T(\phi(x)\rightharpoonup\varphi(y))+T(\phi(x)\rightharpoonup\urcorner\varphi(y))\,=\,1$
	
	\item $T(\phi(x))\,=\,T(\varphi(y)\rightharpoonup\phi(x))\,T(\varphi(y))+T(\urcorner\varphi(y)\rightharpoonup \phi(x))\,T(\urcorner\varphi(y))$
	
	\item $T(\phi(x)\rightharpoonup\varphi(y))\,=\,\dfrac{T(\varphi(y)\rightharpoonup\phi(x))\,T(\varphi(y))} {T(\varphi(y)\rightharpoonup\phi(x))\,T(\varphi(y))+T(\urcorner\varphi(y)\rightharpoonup\phi(x))\, T(\urcorner\varphi(y))}$
\end{enumerate}

\begin{proof}
	\ (i) \ By theorem \ref{44} and (\ref{EquivalenceMultiValueLogic})
	\[
	\phi(x)\,\rightleftharpoons\,\phi(x)\barwedge\mathbf{1}\,\rightleftharpoons\,\phi(x)\barwedge\left(  \varphi(y)\,\veebar\, \urcorner\varphi(y)\right)\,\rightleftharpoons\,\left(\phi(x)\barwedge\varphi(y)\right)\veebar\left(\phi(x)\,\barwedge\, \urcorner\varphi(y)\right)
	\]
	
	And
	\[
	\left(\phi(x)\barwedge\varphi(y)\right)\barwedge\left(\phi(x)\,\barwedge\,\urcorner\varphi(y)\right)  \,\rightleftharpoons\, \phi(x)\barwedge\left(\varphi(y)\,\barwedge\,\urcorner\varphi(y)\right) \,\rightleftharpoons\,\phi(x)\barwedge\mathbf{0}\, \rightleftharpoons\,\mathbf{0}
	\]
	
	So (i) follows by theorem \ref{34}. \medskip
	
	(ii) and (iii) follow by (i) and (\ref{DegreeTruthImplication}). (iv) follows by (iii). \smallskip
\end{proof}

\begin{corollary}
	\label{47} \ There are $x\in\mathscr{D}_1$ and $y\in\mathscr{D}_2$ that
\end{corollary}

\begin{enumerate}
	\item $\left(\phi_{1}(x)\rightharpoonup\phi_{2}(y)\right)\,\not \leftrightharpoons\,\left(\urcorner\phi_{2}(y) \rightharpoonup\urcorner\phi_{1}(x)\right)$
	
	\item $\left(\phi_{1}(x)\rightharpoonup\phi_{2}(y)\right)\,\not \leftrightharpoons \,\left(\urcorner\phi_{1}(x)\veebar \phi_{2}(y)\right)$
\end{enumerate}

\begin{proof}
	\ (i) \ By theorem \ref{44}(vi) and corollary \ref{12}, there are $x\in\mathscr{D}_1$ and $y\in\mathscr{D}_2$ that
	\[
	T(\urcorner\phi_{2}(y)\rightharpoonup\urcorner\phi_{1}(x))\,=\,\frac{1-T(\phi_{1}(x)\veebar\phi_{2}(y))}{1-T(\phi_{2}(y))}\,\neq\, \frac{T(\phi_{1}(x)\barwedge\phi_{2}(y))}{T(\phi_{1}(x))}\,=\,T(\phi_{1}(x)\rightharpoonup\phi_{2}(y))
	\]

	So (i) follows by (\ref{EquivalenceMultiValueLogic}).\medskip
	
	(ii) is proved similarly. 
\end{proof}

\begin{corollary}
	\label{21} \ There are well-formed formulas in $\mathfrak{M}$ that are invalid.
\end{corollary}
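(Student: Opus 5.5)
Since a formula is semantically valid exactly when it has a degree of truth (definition \ref{InvalidFormula}), it suffices to exhibit one well-formed formula of $\mathfrak{M}$ for which no degree of truth can be consistently assigned. The natural candidate comes from corollary \ref{47}: the implication $\phi_{1}\rightharpoonup\phi_{2}$ has a degree of truth given by (\ref{DegreeTruthImplication}), but it has no central bundle (as noted in the remark after (\ref{DegreeTruthImplication})). Hence any formula that feeds an implication into a connective defined through central bundles has no defined truth degree. The cleanest example is
\[
\urcorner\left(\phi_{1}(x)\rightharpoonup\phi_{2}(y)\right)\,\barwedge\,\phi_{3}(z).
\]
Alternatively one can use $\left(\phi_{1}(x)\rightharpoonup\phi_{2}(y)\right)\barwedge\phi_{3}(z)$.

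The plan is as follows. First, observe that this formula is well-formed, since it is built syntactically from atomic formulas of $\mathfrak{M}$ (definition \ref{DefPredicateBundleOperations}). Second, recall that $T$ of a conjunction is defined only as $r$ of a central bundle (definition \ref{DefSetBundleRepresentationConjunction} and definition \ref{DefCartesianFormConjunctionDisjunction}). That central bundle is the intersection of the Cartesian-product central bundles of the conjuncts, so it requires a predicate bundle representation and a central bundle for each conjunct. Third, argue that $\phi_{1}\rightharpoonup\phi_{2}$ admits no predicate bundle representation whose central bundle has weight equal to (\ref{DegreeTruthImplication}) uniformly and compatibly with the joint-bundle axiom \ref{17}. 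Consequently the conjunction has no degree of truth, and the formula is invalid.

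The main obstacle is the third step: making rigorous that no central bundle for the implication can exist. I would argue by contradiction. Suppose $\phi_{1}\rightharpoonup\phi_{2}$ had a central bundle $C$ inside some bundle, with $r(C)=T(\phi_{1}\rightharpoonup\phi_{2})$, and suppose negation and the other connectives behave on $C$ as the set operations do (lemma \ref{11}, theorem \ref{44}). Then the identities of theorem \ref{44} would force set-algebraic relations to hold for the implication. For instance, De Morgan and distributivity would give $T(\phi_{1}\rightharpoonup\phi_{2})=T(\urcorner\phi_{2}\rightharpoonup\urcorner\phi_{1})$ or $T(\phi_{1}\rightharpoonup\phi_{2})=T(\urcorner\phi_{1}\veebar\phi_{2})$, whichever relation is derivable. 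Corollary \ref{47} shows these fail for suitable $x,y$.

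The hard part is identifying exactly which Boolean identity is forced. I would first try the following. Take $\phi_{1}=\phi_{2}$ with a common bundle, using remark \ref{3}. Compute $T\bigl((\phi_{1}\rightharpoonup\phi_{2})\barwedge\phi_{1}\bigr)$ in two ways: via the hypothetical central bundle, and via the product rule of axiom \ref{17} combined with conditional-probability reasoning. Then exhibit concrete bundles, such as those of problems \ref{16} and \ref{32}, where the two computations disagree numerically.
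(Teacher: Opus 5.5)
Your first two steps already reproduce the paper's proof, and at the paper's level of rigor they suffice. The paper argues purely from the definitions. $T(\phi_{1}\rightharpoonup\phi_{2})$ is given by the ratio (\ref{DegreeTruthImplication}), and no predicate bundle or central bundle is ever assigned to an implication. $T$ of a conjunction exists only as the weight of a central bundle (definition \ref{DefSetBundleRepresentationConjunction}). Hence $(\phi_{1}\rightharpoonup\phi_{2})\barwedge(\phi_{1}\rightharpoonup\phi_{3})$ has no degree of truth, and neither does $(\phi_{1}\rightharpoonup\phi_{2})\rightharpoonup\phi_{3}$, whose numerator is such a conjunction; both are invalid by definition \ref{InvalidFormula}. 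Your example $(\phi_{1}\rightharpoonup\phi_{2})\barwedge\phi_{3}$ falls under exactly the same reasoning, and the extra $\urcorner$ in your other example adds nothing.

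The problem is your third step, which tries to upgrade \textquotedblleft no central bundle is assigned\textquotedblright\ to \textquotedblleft no central bundle can exist\textquotedblright. The route you sketch cannot work.

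Lemma \ref{11} and theorem \ref{44} constrain only $\urcorner$, $\barwedge$, $\veebar$; none of their identities mentions $\rightharpoonup$. So positing a central bundle $C$ of weight $T(\phi_{1}(x)\rightharpoonup\phi_{2}(y))$ relates $C$ to $[\phi_{1}(x)]$ and $[\phi_{2}(y)]$ in no way. Nothing forces $T(\phi_{1}\rightharpoonup\phi_{2})=T(\urcorner\phi_{1}\veebar\phi_{2})$, and nothing forces the contrapositive equality, which would even involve a second, unrelated hypothetical bundle for $\urcorner\phi_{2}\rightharpoonup\urcorner\phi_{1}$. Corollary \ref{47}(ii) only excludes the particular candidate $C=[\urcorner\phi_{1}(x)\veebar\phi_{2}(y)]$, not every $C$.

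Likewise, axiom \ref{17} fixes only the marginals of a joint bundle, so $r(C\times[\phi_{1}(x)])$ is not determined. Your \textquotedblleft two ways\textquotedblright\ computation of $T((\phi_{1}\rightharpoonup\phi_{2})\barwedge\phi_{1})$ therefore has no second, forced value to disagree with; \textquotedblleft conditional-probability reasoning\textquotedblright\ is not an axiom of $\mathfrak{M}$.

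In fact the impossibility claim does not follow from axioms \ref{10} and \ref{17} at all. Take a threshold bundle in the style of the \textit{young} example: $p_{u}(x,y)\Leftrightarrow u\leqslant T(\phi_{1}(x)\rightharpoonup\phi_{2}(y))$, with $u$ uniformly weighted on $[0,1]$. Its central bundle has exactly the right weight at every $(x,y)$. A genuine impossibility result needs extra structure the paper does not impose, e.g.\ demanding that $C$ stay correct when the weight measure is conditioned, as in Lewis-type triviality arguments.

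So drop step 3 and rest the corollary, as the paper does, on the fact that the definitions of $\mathfrak{M}$ provide no bundle representation for an implication.
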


\begin{proof}
	By (\ref{DegreeTruthImplication}), if a well-formed formula in $\mathfrak{M}$ contains multiple implications, it does not have a truth degree since it has no central bundle and is invalid by definition \ref{InvalidFormula}. For example, $\left(\phi_{1}\rightharpoonup\phi_{2}\right) \barwedge\left(\phi_{1}\rightharpoonup\phi_{3}\right)$ and $\left(\phi_{1}\rightharpoonup\phi_{2}\right) \rightharpoonup\phi_{3}$ are well-formed but invalid because their degrees of truth can not be defined.
\end{proof}

\begin{remark}
	\label{33}\ Corollary \ref{47} and \ref{21} show that even some of the basic tautologies in formal logic such as $(p\Rightarrow q)\Leftrightarrow(\urcorner q\Rightarrow \urcorner p)$ and $(p\Leftrightarrow q)\Leftrightarrow(p\Rightarrow q)\wedge(q\Rightarrow p)$ can fail in $\mathfrak{M}$.
\end{remark}

\begin{remark}
	\label{}\ Corollary \ref{21} is consistent with linguistics because even if we commonly hear something like \textquotedblleft if he is big, he is heavy\textquotedblright\ and \textquotedblleft a big and heavy object is slow\textquotedblright\ in a natural language, we never hear something like \textquotedblleft a big object implying a heavy object implies that the object is slow\textquotedblright\ because it is semantically invalid ($\left(\phi_{1}\rightharpoonup\phi_{2}\right) \rightharpoonup\phi_{3}$ has no truth degree).
\end{remark}


\begin{definition}
	\label{DefIrrelevantBundles} \ Suppose\ $\left.\mathscr{P}_{1}\right\vert_{r_{1}}$ and $\left.\mathscr{P}_{2}\right \vert_{r_{2}}$ are two predicate bundles. If for each $p\in\mathscr{P}_{1}$ and $q\in\mathscr{P}_{2}$, $r(\left(p,q\right))\,=\,r_{1}(p)\,r_{2}(q)$, then $\left.\mathscr{P}_{1}\right\vert_{r_{1}}$ and $\left.  \mathscr{P}_{2}\right\vert _{r_{2}}$ are known as \textbf{irrelevant} (\textbf{unrelated}) and $\left.\left(\mathscr{P}_{1}\times \mathscr{P}_{2}\right) \right\vert_{r} \,=\,\left.\mathscr{P}_{1}\right\vert_{r_{1}} \times\left.\mathscr{P}_{2} \right\vert_{r_{2}}$. Otherwise, they are \textbf{relevant} (\textbf{related}).
\end{definition}

\begin{definition}
	\label{DefIrrelevantPredicates} \ If $\,\forall x\forall y (T(\phi_{1}(x)\rightharpoonup\phi_{2}(y))=T(\phi_{2}(y)))$, then $\phi_{1}$ and $\phi_{2}$ are known as \textbf{irrelevant}. Otherwise they are \textbf{relevant}.
\end{definition}

\begin{lemma}
	\label{18} \ $\phi_{1}$ and $\phi_{2}$ are irrelevant if and only if $\,\forall x\forall y(T(\phi_{1}(x)\barwedge \phi_{2}(y)) \,=\, T(\phi_{1}(x)) \,T(\phi_{2}(y)))$.
\end{lemma}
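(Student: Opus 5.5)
The plan is to unfold Definition \ref{DefIrrelevantPredicates} using the definition (\ref{DegreeTruthImplication}) of the degree of truth for implication. I will prove the biconditional pointwise, i.e. for each fixed $x\in\mathscr{D}_1$ and $y\in\mathscr{D}_2$,
\[
T(\phi_{1}(x)\rightharpoonup\phi_{2}(y))=T(\phi_{2}(y)) \;\Longleftrightarrow\; T(\phi_{1}(x)\barwedge\phi_{2}(y))=T(\phi_{1}(x))\,T(\phi_{2}(y)).
\]
Quantifying over all $x,y$ then gives the lemma directly. Because (\ref{DegreeTruthImplication}) is defined by cases, I will split on whether $T(\phi_{1}(x))$ is zero.

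\emph{Case $T(\phi_{1}(x))\neq0$.} Here (\ref{DegreeTruthImplication}) gives $T(\phi_{1}(x)\rightharpoonup\phi_{2}(y))=T(\phi_{1}(x)\barwedge\phi_{2}(y))/T(\phi_{1}(x))$. The equality with $T(\phi_{2}(y))$ is then equivalent to the product formula, since we may multiply or divide by the nonzero number $T(\phi_{1}(x))$. This is the routine part.

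\emph{Case $T(\phi_{1}(x))=0$.} This is the step I expect to be the main obstacle. On the product side, Corollary \ref{20}(i) gives $0\leqslant T(\phi_{1}(x)\barwedge\phi_{2}(y))\leqslant T(\phi_{1}(x))=0$, so $T(\phi_{1}(x)\barwedge\phi_{2}(y))=0=T(\phi_{1}(x))\,T(\phi_{2}(y))$. The product condition therefore holds automatically. On the implication side, however, (\ref{DegreeTruthImplication}) sets $T(\phi_{1}(x)\rightharpoonup\phi_{2}(y))=1$, and this equals $T(\phi_{2}(y))$ only when $T(\phi_{2}(y))=1$. So the literal equivalence can fail at points where $T(\phi_{1}(x))=0$ and $T(\phi_{2}(y))<1$.

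I would resolve this the way the paper's probabilistic analogy suggests, by reading Definition \ref{DefIrrelevantPredicates} as applying only to those $x$ with $T(\phi_{1}(x))\neq0$, just as conditional probability is only meaningful for a conditioning event of positive measure. Equivalently, I would treat the zero case as vacuous, since independence is automatic for null events. I would state this convention explicitly. The forward direction then holds in full, because the zero case satisfies the product formula trivially. The backward direction holds on all points with $T(\phi_{1}(x))\neq0$, which is exactly the range where the implication is governed by the ratio. With that convention, the two cases together establish the lemma.
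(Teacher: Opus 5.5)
Your unfolding is exactly the paper's argument. Its entire proof is ``By Definition~\ref{DefIrrelevantPredicates} and (\ref{DegreeTruthImplication}),'' which is your case $T(\phi_{1}(x))\neq0$.

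Your analysis of the case $T(\phi_{1}(x))=0$ is also correct, and it catches something the paper's one-line proof silently ignores. Because of the second clause of (\ref{DegreeTruthImplication}), the ``if'' direction of the lemma is false as literally stated.

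The paper's own data give a counterexample. Take ``young'' $\phi_{1}$ from Problem~\ref{16} as the antecedent, gender $\phi_{0}$ from Problem~\ref{24} as the consequent, and the product weight of Problem~\ref{2}. The computation inside the proof of Theorem~\ref{27}, which does not use the lemma, gives $T(\phi_{1}(x)\barwedge\phi_{0}(y))=T(\phi_{1}(x))\,T(\phi_{0}(y))$ for all $x,y$. Yet $T(\phi_{1}(40)\rightharpoonup\phi_{0}(\text{`M'}))=1\neq0.5=T(\phi_{0}(\text{`M'}))$. The paper's worked example after Theorem~\ref{27} avoids this only because it puts gender, whose truth degree never vanishes, in the antecedent.

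So your restriction to $x$ with $T(\phi_{1}(x))\neq0$ is not an optional reading but a necessary correction. Present it as an amended Definition~\ref{DefIrrelevantPredicates} rather than as a convention. With it, your two cases prove both directions, and your observation that the ``only if'' direction holds even under the original definition is right.

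The same amendment is needed for Theorem~\ref{27} and Corollary~\ref{45}. Both invoke the ``if'' direction, and both are refuted by the same kind of example, for instance ``old'' from Problem~\ref{76} at an age below $50$ as the antecedent.
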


\begin{proof}
	\ By definition \ref{DefIrrelevantPredicates} and (\ref{DegreeTruthImplication}). \bigskip
\end{proof}

The above suggests that the notion of irrelevancy is analogous to independence in probability theory. However, conventionally we call two unrelated concepts irrelevant rather than independent in logic. Often we can determine whether two concepts are irrelevant or not by intuition. For example, adjectives \textit{tall} and \textit{heavy} are related to gender because men are taller and heavier than women on average. However, age is not related to gender in general because each age group has different genders (of the same proportion). So \textit{young} and \textit{old} are unrelated to gender. As another example, adjectives \textit{big} and \textit{heavy} are related because a big object tends to be heavier than a small object.

\begin{theorem}
	\label{27} \ If $\left.\mathscr{P}_{1}\right\vert_{r_{1}}$ and $\left.\mathscr{P}_{2}\right\vert_{r_{2}}$ are irrelevant, $\phi_{1}$ and $\phi_{2}$ are irrelevant.
\end{theorem}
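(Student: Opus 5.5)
By Lemma \ref{18}, it suffices to show that for all $x\in\mathscr{D}_1$ and $y\in\mathscr{D}_2$
\[
T(\phi_{1}(x)\barwedge\phi_{2}(y))\,=\,T(\phi_{1}(x))\,T(\phi_{2}(y)).
\]
By Definition \ref{DefSetBundleRepresentationConjunction} and Theorem \ref{13}(i), the left side is the joint weight of a Cartesian product, $r(\left[\phi_{1}(x)\right]\times\left[\phi_{2}(y)\right])$. The right side is $r_{1}(\left[\phi_{1}(x)\right])\,r_{2}(\left[\phi_{2}(y)\right])$ by Definition \ref{DefBelief}. The whole proof therefore reduces to a product rule for the joint weight on rectangles $\mathscr{I}_1\times\mathscr{I}_2$ with $\mathscr{I}_i\in\sigma(\mathscr{P}_i)$.

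Definition \ref{DefIrrelevantBundles} gives the product rule only on singletons: $r((p,q))=r_{1}(p)\,r_{2}(q)$. I will extend it to rectangles using countable additivity (Axiom \ref{10}(iii)). Treating the bundles as countable, as in every example in the paper, the rectangle is the disjoint union of its singletons $\{(p,q)\}$, so
\[
r(\mathscr{I}_1\times\mathscr{I}_2)\,=\,\sum_{p\in\mathscr{I}_1}\sum_{q\in\mathscr{I}_2} r_{1}(p)\,r_{2}(q)\,=\,\Bigl(\sum_{p\in\mathscr{I}_1} r_{1}(p)\Bigr)\Bigl(\sum_{q\in\mathscr{I}_2} r_{2}(q)\Bigr)\,=\,r_{1}(\mathscr{I}_1)\,r_{2}(\mathscr{I}_2).
\]
Rearranging the double series is legitimate because all terms are nonnegative (Axiom \ref{10}(i)). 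Taking $\mathscr{I}_1=\left[\phi_{1}(x)\right]$ and $\mathscr{I}_2=\left[\phi_{2}(y)\right]$ then yields the required identity, and Lemma \ref{18} finishes the proof. The degenerate case $T(\phi_1(x))=0$ needs no separate treatment, since Lemma \ref{18} is already stated in product form.

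The main obstacle is the passage from singletons to sets. If the bundles are uncountable, or if singleton weights do not determine the measure (all singletons could even have weight zero), the pointwise hypothesis says nothing about $r$ on rectangles. In that case I would read Definition \ref{DefIrrelevantBundles} as asserting $\left.\left(\mathscr{P}_{1}\times\mathscr{P}_{2}\right)\right\vert_{r}=\left.\mathscr{P}_{1}\right\vert_{r_{1}}\times\left.\mathscr{P}_{2}\right\vert_{r_{2}}$ in the product-measure sense. Product measures agree with $r_1 r_2$ on measurable rectangles, and with that reading the rectangle identity holds directly. I would also note that the central bundles must lie in $\sigma(\mathscr{P}_i)$ for $T$ to be defined at all, and this is implicitly assumed throughout the paper.
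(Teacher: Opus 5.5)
Your proposal is correct and follows the paper's proof essentially step for step. Theorem~\ref{13}(i) turns the conjunction's central bundle into the rectangle $\left[\phi_{1}(x)\right]\times\left[\phi_{2}(y)\right]$, countable additivity splits it into singletons, the singleton product rule of Definition~\ref{DefIrrelevantBundles} factors the double sum, and Lemma~\ref{18} concludes; your remark that this needs countable central bundles makes explicit an assumption the paper uses silently. One caution on your degenerate-case comment: it is harmless only because you, like the paper, take Lemma~\ref{18} as given. Its ``if'' direction fails when $T(\phi_{1}(x))=0$ and $T(\phi_{2}(y))<1$, since the implication is then defined to be $1$ rather than $T(\phi_{2}(y))$, so what both arguments really establish is the product form of irrelevance.
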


\begin{proof}
	\ By definition \ref{DefIrrelevantBundles} and theorem \ref{19}(i), for $x\in\mathscr{D}_1$ and $y\in\mathscr{D}_2$
	\begin{align*}
		r\left(\left[\phi_{1}(x)\barwedge\phi_{2}(y)\right]\right)  & \:=\: r\left(\left[\phi_{1}(x)\right]\times\left[\phi_{2}(y)\right]\right) 
		\\
		& \:=\: r\left(\underset{p\in\left[\phi_{1}(x)\right]}{\bigcup}\quad \smashoperator{\bigcup_{q\in\left[\phi_{2}(y)\right]}}\,\,\{\left(p,q\right)\}\right) 
		\\
		& \:=\:\underset{p\in\left[\phi_{1}(x)\right]}{\sum}\quad\smashoperator{\sum_{q\in\left[\phi_{2}(y)\right]}}\, r(\left(p,q\right))
		\\
		& \:=\:\quad\smashoperator{\sum_{p\in\left[\phi_{1}(x)\right]}}\,r_{1}(p)\,\, \smashoperator{\sum_{q\in\left[\phi_{2}(y) \right]}}\,r_{2}(q)
		\\
		& \:=\: r_{1}(\left[\phi_{1}(x)\right])\,r_{2}(\left[\phi_{2}(y)\right])
	\end{align*}
	
	So it follows by lemma \ref{18}. \smallskip
\end{proof}

\begin{problem}
	\ Suppose $\phi_{0}(x_{0})$ and $\widetilde{\mathcal{P}}(\phi_{0})$ are given in problem \ref{24}, $\phi_{1}(x_{1})$ and $\widetilde{\mathcal{P}}(\phi_{1})$ are given in problem \ref{2}. Find out if $\phi_{0}$ and $\phi_{1}$ are irrelevant.
\end{problem}

\begin{proof}
	[Solution]\ \ By problem \ref{2}, for any $1\leqslant n\leqslant 20$
	\[
	r_{0,1}\left(\left(p_1,q_{n}\right)\right) \,=\, r_{0,1}\left(\left(p_2,q_{n}\right)\right) \,=\,r_{0}(p_1)\, r_{1}(q_{n}) \,=\,r_{0}(p_2)\,r_{1}(q_{n})
	\]
	
	So by definition \ref{DefIrrelevantBundles} and theorem \ref{27}, $\phi_{0}$ and $\phi_{1}$ are irrelevant.\medskip
	
	For example, as calculated in problem \ref{16} and \ref{2}, $T(\phi_{1}(20))=1$ and $T(\phi_{0}($`M'$) \barwedge\phi_{1}(20)) \,=\, T(\phi_{0}($`F'$)\barwedge\phi_{1}(20))\,=\,0.5$. So
	\[
	T(\phi_{0}(\text{`M'})\rightharpoonup\phi_{1}(20))\,=\,\frac{T(\phi_{0}(\text{`M'})\barwedge\phi_{1}(20))}{T(\phi_{0}(\text{`M'}))}\,=\,1\,=\,T(\phi_{1}(20))
	\]
	
	And $T(\phi_{0}($`F'$)\rightharpoonup\phi_{1}(20))\,=\,1\,=\,T(\phi_{1}(20))$. \medskip
	
	Furthermore, $T(\phi_{0}($`M'$)\rightharpoonup\phi_{1}(30))\,=\,T(\phi_{0} ($`F'$)\rightharpoonup\phi_{1}(30)) \,=\,0.5 \,=\, T(\phi_{1}(30))$. \smallskip
\end{proof}

\begin{problem}
	\ Suppose $\phi_{0}(x_{0})$ and $\widetilde{\mathcal{P}}(\phi_{0})$ are given in problem \ref{24}, $\phi_{2}(x_{2})$ and $\widetilde{\mathcal{P}}(\phi_{2})$ are given in problem \ref{26}. Find out if $\phi_{0}$ and $\phi_{2}$ are irrelevant.
\end{problem}

\begin{proof}
	[Solution] \ By problem \ref{26}, $r_{0,2}\left(\left(p_1,q_{n}\right)\right) \,\neq\,r_{0}(p_1)\,r_{2}(q_{n})$. So $\phi_{0}$ and $\phi_{2}$ are relevant. \medskip
	
	For example, as calculated in problem \ref{32} and \ref{26}, $T(\phi_{2}(1.8))\,=\,0.75$, $T(\phi_{0}($`M'$)\barwedge \phi_{2}(1.8)) \,=\,0.25$ and $T(\phi_{0}($`F'$)\barwedge\phi_{2}(1.8))\,=\,0.5$. So
	\[
	T(\phi_{0}(\text{`M'})\rightharpoonup\phi_{2}(1.8))\,=\,\frac{T(\phi_{0}(\text{`M'})\barwedge\phi_{2}(1.8))} {T(\phi_{0}(\text{`M'}))} \,=\,0.5\,\neq\,T(\phi_{2}(1.8))
	\]
	
	And $T(\phi_{0}($`F'$)\rightharpoonup\phi_{2}(1.8))\,=\,1\,\neq\,T(\phi_{2}(1.8))$.\bigskip
\end{proof}

The above results are consistent with human perception on age and height. For instance, a height of $1.8$ meters has $100$ percent belief of \textit{tall} for women but has only $50$ percent belief of \textit{tall} for men. So gender and height are relevant. However, an age of 20 years old has $100 $ percent belief of \textit{young} for both men and women, and an age of 30 years old has $50$ percent belief of \textit{young} for both men and women. Thus gender and age are irrelevant. \smallskip

\section{Set Bundles and Applications}

\subsection{Introduction}

In modeling the adjectives such as \textit{young} (section \ref{SectionIntroduction}), we can adopt another way by using sets instead of predicates. For instance, \textquotedblleft he is less than $20$ years old\textquotedblright\ is true if and only if $(\{x\} \subset(0,20)$ ($x$ is the age of \textquotedblleft he\textquotedblright), but \textquotedblleft he is young\textquotedblright\ does not have such a representation because many other standards such as less than $21$ or $22$ years old or whatsoever, also qualify for \textit{young}. However, we can extend the object set to a bundle of sets, each of which carries a weight. So let a set bundle for \textit{young} be
\[
\mathcal{C}\,=\,\{I_{n}\colon I_{n}=\left[0,i\right] \,\wedge\,20\leqslant i\leqslant39\,\wedge\,n=i-19\,\wedge\,r(I_{n})=1/20\,\}
\]
where $r(I_{n})$ is the weight for each $I_{n}$ (see Figure \ref{Fig1}). We also assume that the sum of all weights is one, i.e. $\underset{1\leqslant n\leqslant20}{\sum}r(I_{n})=1$. As the result, the degree of truth (or belief) for \textquotedblleft he is young\textquotedblright\ can be described by the weight of all such sets that $x\in I_{n}$, i.e. $T(x)\,=\,\sum r(I_{n})$ that $\{x\}\subset I_{n}$ $\left(\text{or }x\leqslant i\right)$.\smallskip

\begin{figure}[h]
	\centering
	\includegraphics{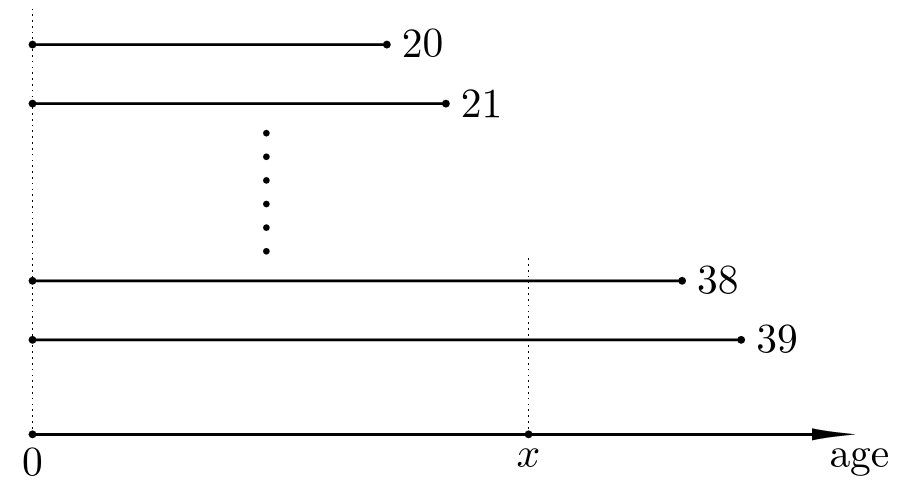}
	\caption{Diagram of a set bundle for \textit{young}.}
	\label{Fig1}
	\centering
\end{figure}

For any $x\leqslant20,$ $x$ is in all $I_{n}$ of $\mathcal{C}$ and so $T(x)=1$, i.e. any age less than $20$ has one hundred percent belief of \textit{young}. On the other hand, for any $x\geqslant40$,\ no $I_{n}$ in $\mathcal{C}$ contains $x$ and so $T(x)=0$, i.e. any age over $40$ has zero percent belief of \textit{young}. If $20<x<40$, it has a belief of \textit{young} between zero and one. For example, if $x=25,$ then $T(x)=0.75$ because three fourths of $\mathcal{C}$ contain $x$.

\subsection{Set Bundle Representation}

In first-order logic, many predicates can be represented by the inclusion of a pair of sets as discussed in the previous section. As a result, such a predicate can be expanded into the multi-valued logic $\mathfrak{M}$ by the inclusion of a set with a bundle of sets. The advantage of set bundles over predicate bundles is that set bundles often have simpler representations and are easier to model than predicate bundles. This is especially true if a bundle is uncountable or joint (setion \ref{ContinuousBundle} and \ref{JointBundle}).\smallskip

Evidently, a bundle of sets is a special case of a predicate bundle (definition \ref{DefPredicateBundle}) in which each predicate is replaced by a set. First, we define the weight measure on a collection of sets.

\begin{definition}
	\label{Sigma_field_SetBundle} \ Suppose $\mathcal{G}$ is a collection of sets. A \textbf{$\sigma-$field} on $\mathcal{G}$ is a nonempty collection of subsets of $\mathcal{G}$ closed under complement and countable unions. 
\end{definition}

\begin{axiom}
	\label{1} \ Suppose $\mathcal{G}$ is a collection of sets and $\sigma(\mathcal{G})$ is a $\sigma-$field on $\mathcal{G}$. The \textbf{weight measure} $r$ on $\mathcal{G}$ is a real-valued function on $\sigma(\mathcal{G})$ that satisfies the following conditions.
\end{axiom}

\begin{enumerate}
	\item \textit{For any} $X\in\mathcal{G},$ $\,r(X)\geqslant0$.
	
	\item $r(\mathcal{G})=1$.
	
	\item \textit{For any (countably many) disjoint} $\mathcal{G}_{i}\in\sigma(\mathcal{G})$
	\[
	r\left(\bigcup_{i\geqslant1}\,\mathcal{G}_{i}\right) \,=\,\sum_{i\geqslant1}\,r(\mathcal{G}_{i})
	\]	
\end{enumerate}

\begin{corollary}
	\label{50} \ Suppose $\mathcal{G}_{1},\mathcal{G}_{2}\in\sigma(\mathcal{G})$. Then
\end{corollary}

\begin{enumerate}
	\item $r(\varnothing)=0$
	
	\item $\mathcal{G}_{2}\subset\mathcal{G}_{1} \,\Longrightarrow \, r(\mathcal{G}_{1}-\mathcal{G}_{2}) \,=\, r(\mathcal{G}_{1}) -r(\mathcal{G}_{2})$
	
	\item $\mathcal{G}_{2}\subset\mathcal{G}_{1} \,\Longrightarrow \, r(\mathcal{G}_{2})\leqslant r(\mathcal{G}_{1})$
	
	\item $\mathcal{G}_{1}\subset\mathcal{G} \,\Longrightarrow \,r(\mathcal{G}_{1})\leqslant1$
\end{enumerate}

\begin{definition}
	\label{DefSetBundle} \ A \textbf{bundle of sets} is a collection of sets with a weight measure and is denoted as $\left.\mathcal{G}\right\vert_{r}=\{X\colon X\in \mathscr{E}\,\wedge\,r(X)\}$, where $r$ is the weight measure of the bundle and $\mathscr{E}$ a universe of discourse. Two set bundles $\left.\mathcal{G}_{1}\right\vert_{r_{1}}$ and $\left.\mathcal{G}_{2}\right\vert_{r_{2}}$ are \textbf{identical} if
	\[
	\left.\mathcal{G}_{1}\right\vert_{r_{1}}=\left.\mathcal{G}_{2}\right\vert_{r_{2}}\,\Longleftrightarrow\,\left(\mathcal{G}_{1} =\mathcal{G}_{2}\,\wedge\,\left(\forall X\in\mathcal{G}_{1}\right) \left(r_{1}(X)\,=\,r_{2}(X)\right) \right)
	\]
\end{definition}

\begin{definition}
	\ $\left.\mathcal{G}_{0}\right\vert_{r}$ is a \textbf{sub-bundle} of $\left.\mathcal{G}\right\vert_{r}$ if $\,\mathcal{G}_{0} \subset\mathcal{G}$. For $\,\left.\mathcal{G}_{1}\right\vert_{r}\subset\left.\mathcal{G}\right\vert_{r}$ and $\,\left.\mathcal{G}_{2}\right\vert_{r}\subset\left.\mathcal{G}\right\vert_{r}$, the \textbf{intersection} and \textbf{union} of $\left.\mathcal{G}_{1}\right\vert_{r}$ and $\left.\mathcal{G}_{2}\right\vert_{r}$ are $\mathcal{G}_{1}\cap\mathcal{G}_{2}$ and $\mathcal{G}_{1}\cup\mathcal{G}_{2}$ respectively.
\end{definition}

\begin{definition}
	\label{DefSetBundleRepresentation}\ \ A \textbf{set bundle representation} of a predicate $\phi$ in $\mathfrak{M}$ is defined as either $\widetilde{\mathcal{S}}(\phi)=\left(A, \left.\mathcal{G}\right\vert_{r},\subset\right)$ specifying whether $A\subset X$ for $X\in\left.\mathcal{G}\right\vert_{r}$, or $\widetilde{\mathcal{S}}(\phi)=\left(A,\left.\mathcal{G} \right\vert_{r},\not \subset\right)$ specifying whether $A\not\subset X$ for $X\in\left.\mathcal{G}\right\vert_{r}$, where $A$ $\left(A\neq\varnothing\right)$ is a \textbf{(subject) set} and $\left.\mathcal{G}\right\vert_{r}$ an \textbf{(object) set bundle}. In general, $\widetilde{\mathcal{S}}(\phi)\,=\,\left(A,\left.\mathcal{G}\right\vert_{r},\bigodot\right)$ where $\bigodot$ denotes $\subset$ or $\not \subset$.
\end{definition}

\begin{definition}
	\label{DefSetBundleRepresentationNegation} \ If $\widetilde{\mathcal{S}}(\phi)\,=\,\left(A,\left.\mathcal{G}\right\vert_{r}, \subset\right)$, the set bundle representation of \textbf{negation }$\urcorner\phi$ is defined as $\widetilde{\mathcal{S}} (\urcorner\phi) \,=\, \left(A,\left.\mathcal{G}\right\vert_{r},\not \subset \right)$. If $\widetilde{\mathcal{S}}(\phi) \,=\,\left(A,\left.\mathcal{G}\right\vert_{r},\not \subset \right)$, $\widetilde{\mathcal{S}}(\urcorner\phi)\, =\, \left(A,\left.\mathcal{G}\right\vert_{r},\subset\right)$.
\end{definition}

Now we define degree of truth (or belief) based on the notion of a central set bundle.

\begin{definition}
	\label{DefCentralClass} \ If $\widetilde{\mathcal{S}}(\phi)=\left(A,\left.\mathcal{G}\right\vert_{r},\subset\right)$, the \textbf{central bundle} of $\phi$ is a sub-bundle of $\left.\mathcal{G}\right\vert_{r}$ that is defined as $\left[\phi\right]  =\{X\colon X\in\mathcal{G}\,\wedge\, A\subset X\}$, and the \textbf{dual central bundle} of $\phi$ is $\left[\phi\right]^{c} = \{X\colon X\in\mathcal{G}\,\wedge\,A\not \subset X\}$. If $\widetilde{\mathcal{S}}(\phi)=\left(A,\left.\mathcal{G} \right \vert_{r},\not \subset \right)$, $\left[\phi\right]=\{X\colon X\in\mathcal{G}\wedge A\not \subset X\}$ and $\left[\phi\right]^{c}= \{X\colon X\in\mathcal{G}\wedge A\subset X\}$.
\end{definition}

\begin{definition}
	\label{DefBeliefSetbundle} \ The \textbf{degree of truth} (or \textbf{belief}) of $\phi$ in $\mathfrak{M}$ is defined to be the weight of the central bundle of $\phi$, i.e. $T(\phi)\,=\,r(\left[\phi\right])$.
\end{definition}

Lemma \ref{11}, corollary \ref{12} and section \ref{SectionEquivalence} also hold for a bundle of sets. Analogous to predicate bundles, the following results hold for the Cartesian products of set bundles.

\begin{definition}
	\label{DefJointSetBundles} \ Suppose $\left.\mathcal{G}_{i}\right\vert _{r_{i}}=\,\{X_{i}\colon X_{i}\in \mathscr{E}_{i}\,\wedge\,r_{i}(X_{i})\}$ $\left(1\leqslant i\leqslant n\right)$ are $n$ set bundles where $\mathscr{E}_{i}$ are universes of discourse. A \textbf{joint bundle} of $\left.\mathcal{G}_{i} \right\vert_{r_{i}}$ is defined as:
	\[
	\left.\left(\mathcal{G}_{1}\times\cdots\times\mathcal{G}_{n}\right)\right\vert_{r}\,=\,\{\left(X_{1},\cdots,X_{n}\right) \colon X_{i}\in\mathcal{G}_{i}\,\wedge\,r\left(  \left(  X_{1},\cdots,X_{n}\right)\right) \,\wedge\,1\leqslant i\leqslant n\,\}
	\]
	Where $r$ is a weight measure on $\sigma(\mathcal{G}_{1}\times\cdots\times\mathcal{G}_{n})$.
\end{definition}

\begin{axiom}
	\label{51} \ Suppose $\left.\mathcal{G}_{i}\right\vert_{r_{i}}$ $\left(1\leqslant i\leqslant n\right)$ are given in definition
	\ref{DefJointSetBundles}. For any $\mathcal{A}_{i}\in\sigma(\mathcal{G}_{i})$, there is
	\[
	r\left(\mathcal{G}_{1}\times\cdots\times\mathcal{A}_{i}\times\cdots\times\mathcal{G}_{n}\right) \,=\,r(\mathcal{G}_{1})\cdots r(\mathcal{A}_{i})\cdots r(\mathcal{G}_{n})\,=\,r(\mathcal{A}_{i})
	\]
\end{axiom}

\begin{corollary}
	\label{22}\quad$r\left(\mathcal{G}_{1}\times\cdots\times\mathcal{G}_{n}\right) \,=\,1$
\end{corollary}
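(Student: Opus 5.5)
The claim is that $r\left(\mathcal{G}_{1}\times\cdots\times\mathcal{G}_{n}\right)=1$. I would obtain it as a direct specialization of axiom \ref{51}, in parallel with the way corollary \ref{52} follows from axiom \ref{17} for predicate bundles.

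First, fix any index $i$ with $1\leqslant i\leqslant n$ and take $\mathcal{A}_{i}=\mathcal{G}_{i}$. This is a legitimate choice because $\mathcal{G}_{i}\in\sigma(\mathcal{G}_{i})$. By definition \ref{Sigma_field_SetBundle} the $\sigma$-field is nonempty and closed under complements and countable unions. So if $\mathcal{B}\in\sigma(\mathcal{G}_{i})$, then $\mathcal{G}_{i}-\mathcal{B}\in\sigma(\mathcal{G}_{i})$, and hence $\mathcal{B}\cup(\mathcal{G}_{i}-\mathcal{B})=\mathcal{G}_{i}\in\sigma(\mathcal{G}_{i})$.

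With this substitution the left-hand side of axiom \ref{51} becomes $r\left(\mathcal{G}_{1}\times\cdots\times\mathcal{G}_{n}\right)$, and the right-hand side becomes $r_{i}(\mathcal{G}_{i})$. Axiom \ref{1}(ii), applied to the bundle $\left.\mathcal{G}_{i}\right\vert_{r_{i}}$, gives $r_{i}(\mathcal{G}_{i})=1$, which proves the claim.

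There is no genuine obstacle here. The only point needing care is that axiom \ref{51} writes $r(\mathcal{A}_{i})$ where it means the marginal weight $r_{i}(\mathcal{A}_{i})$ of the $i$-th bundle. One also needs the membership $\mathcal{G}_{i}\in\sigma(\mathcal{G}_{i})$, which was checked above.
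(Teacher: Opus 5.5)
Your proposal is correct and is exactly the intended argument: the paper gives no separate proof, treating the corollary as the immediate specialization $\mathcal{A}_i=\mathcal{G}_i$ of axiom \ref{51} combined with $r_i(\mathcal{G}_i)=1$ from axiom \ref{1}(ii), just as corollary \ref{52} follows from axiom \ref{17}. Your verification that $\mathcal{G}_i\in\sigma(\mathcal{G}_i)$ is a harmless extra detail, since axiom \ref{1}(ii) already presupposes it.
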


\subsection{Membership of Set Bundles}

In this section, we will introduce membership for a set bundle. First, let's understand what \textquotedblleft$x$ belongs to a set bundle $\left.\mathcal{G}\right\vert_{r}$\textquotedblright\ (or \textquotedblleft$x$ is a member of $\left.\mathcal{G}\right\vert_{r}$\textquotedblright) means. Since for any $Y\in\mathcal{G}$, $x\in Y$ is a formal predicate, \textquotedblleft$x$ belongs to $\left.\mathcal{G}\right\vert_{r}$\textquotedblright\ must be a predicate in $\mathfrak{M}$ because it involves a bundle of (formal) predicates. So let $\phi(x)\rightleftharpoons\,$\textquotedblleft $x$ belongs to $\left.\mathcal{G}\right\vert_{r}$\textquotedblright\ and $\widetilde{\mathcal{P}}(\phi)=\{\left.\mathscr{P}\right\vert_{r},\,\bigcup\mathcal{G}\}$ where $\left.\mathscr{P}\right\vert_{r}=\{p(u)\colon p(u)\Leftrightarrow u\in Y\,\wedge \,Y\in \mathcal{G}\}$. Obviously, the central bundle $\left[\phi(x)\right]$ includes all sets that contain $x$. Thus its degree of truth which is the weight measure of (percentage of) sets containing $x$ can be defined as the membership degree of $x$ in $\mathcal{G}$. So we have

\begin{definition}
	\label{DefMembershipSetbundle} \ Suppose $\left.\mathcal{G}\right\vert_{r}$ is a set bundle and $x\in\bigcup\mathcal{G}$. The \textbf{membership} of $x$ in $\left.\mathcal{G}\right\vert_{r}$ is denoted as $\,x\sbin\left.\mathcal{G}\right\vert_{r}$ which is a predicate in $\mathfrak{M}$, where $\widetilde{\mathcal{P}}(x\sbin\mathcal{G})=\{\left.\mathscr{P}\right\vert_{r},\, \bigcup\mathcal{G}\}$ and $\left.\mathscr{P}\right\vert_{r}=\{p(u)\colon p(u)\Leftrightarrow u\in Y\,\wedge \,Y\in \mathcal{G}\}$.  The \textbf{membership degree} of $x$ in $\left.\mathcal{G}\right\vert_{r}$ is defined as $\,T(x\sbin\mathcal{G})$.
\end{definition}

\begin{lemma}
	\label{29} \quad $\left[x\sbin\mathcal{G}\right]\,=\,\{Y\colon x\in Y\,\wedge\,Y\in\mathcal{G}\}$
\end{lemma}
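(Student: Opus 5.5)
The plan is to unwind the definitions directly. By definition \ref{DefMembershipSetbundle}, the predicate $x\sbin\left.\mathcal{G}\right\vert_{r}$ has predicate bundle representation $\widetilde{\mathcal{P}}(x\sbin\mathcal{G})=\{\left.\mathscr{P}\right\vert_{r},\bigcup\mathcal{G}\}$, where $\mathscr{P}=\{p_Y\colon p_Y(u)\Leftrightarrow u\in Y\,\wedge\,Y\in\mathcal{G}\}$. So the central bundle is computed by definition \ref{DefCentralBundle} applied to the element $x\in\bigcup\mathcal{G}$:
\[
\left[x\sbin\mathcal{G}\right]\,=\,\{p_Y\colon p_Y\in\mathscr{P}\,\wedge\,\vdash p_Y(x)\}.
\]

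The key step is the chain of equivalences for $Y\in\mathcal{G}$:
\[
p_Y\in\left[x\sbin\mathcal{G}\right] \,\Longleftrightarrow\, \vdash p_Y(x) \,\Longleftrightarrow\, x\in Y.
\]
The second equivalence holds because $p_Y(u)\Leftrightarrow u\in Y$ is a formal first-order predicate, and in the sense of the remark after definition \ref{DefCentralBundle}, "$\vdash p(x)$" just means $x$ satisfies $p$.

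Finally, I identify each predicate $p_Y$ with its set $Y$. The map $Y\mapsto p_Y$ is a bijection from $\mathcal{G}$ onto $\mathscr{P}$. It is onto by construction. It is injective because predicates are taken up to equivalence (definition \ref{Sigma_field}), and $p_Y\Leftrightarrow p_{Y'}$ forces $Y=Y'$ by extensionality. This identification also carries the weight $r$ over unchanged, which is implicit in definition \ref{DefMembershipSetbundle} since the same $r$ is used. Under it, the displayed central bundle becomes $\{Y\colon x\in Y\,\wedge\,Y\in\mathcal{G}\}$, which is the claim.

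The only delicate point is this identification of $\mathscr{P}$ with $\mathcal{G}$, specifically the injectivity (distinct sets giving distinct predicates up to equivalence). I would handle it with one sentence citing extensionality together with the "up to equivalence" convention.
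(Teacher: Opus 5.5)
Your proposal is correct and follows essentially the same route as the paper: unwind definitions \ref{DefMembershipSetbundle} and \ref{DefCentralBundle} to get $\{p\colon p\in\mathscr{P}\wedge\vdash p(x)\}$, then use $\vdash p_Y(x)\Leftrightarrow x\in Y$ and identify each $p_Y$ with its set $Y$. Your injectivity argument (extensionality together with predicates being taken up to equivalence) spells out the identification of $\mathscr{P}$ with $\mathcal{G}$, which the paper uses without stating it.
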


\begin{proof}
	\ Let $\left.\mathscr{P}\right\vert_{r}=\{p(u)\colon p(u)\Leftrightarrow u\in Y\,\wedge \,Y\in \mathcal{G}\}$.
	By definition \ref{DefMembershipSetbundle}, $\left[x\sbin\mathcal{G}\right]=\{p(u)\colon \allowbreak \vdash p(x)\wedge p(u)\in \mathscr{P}\}$. So it follows by
	\[
	p(x)\in\mathscr{P}\:\Longleftrightarrow\: x\in Y\,\wedge\,Y\in\mathcal{G}\:\Longleftrightarrow\: Y\in \left[x\sbin\mathcal{G}\right]\vspace{-18pt}
	\]
\end{proof}

\begin{figure}[h]
	\centering
	\includegraphics{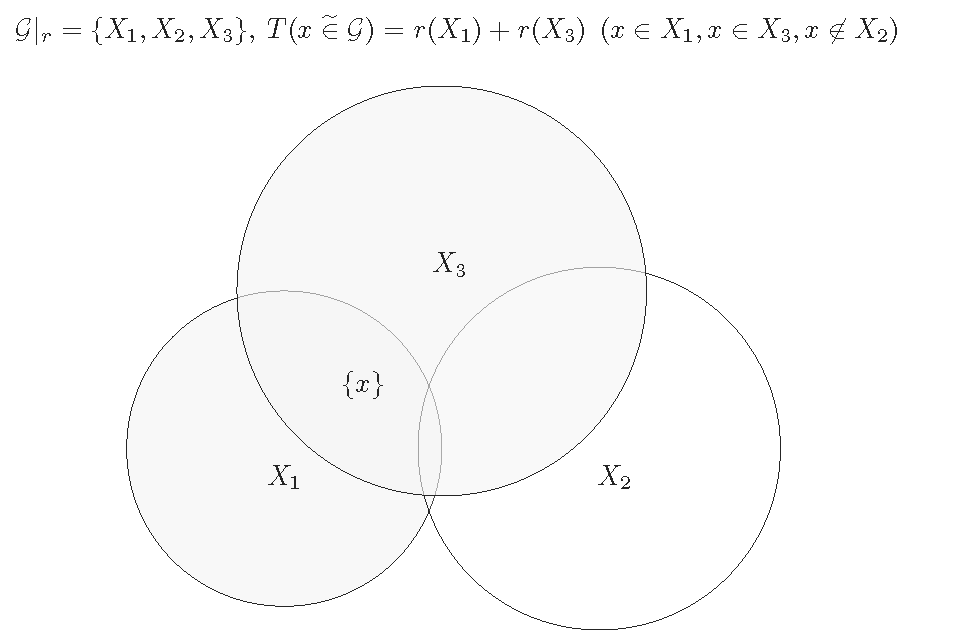}
	\caption{Diagram for membership of a set bundle.}
	\label{Fig2}
	\centering
\end{figure}

A diagram for membership of a set bundle is shown in Figure \ref{Fig2}. 

\begin{lemma}
	\label{43} \ $T(x\sbin\mathcal{G})$ \textit{is the percentage (weight) of the sets in }$\left.\mathcal{G}\right\vert_{r} $\textit{\ that contain }$x$.\footnote{Together with definition \ref{DefMembershipSetbundle}, we can see that $\mathcal{G}$ can be viewed as a rigorous model of a fuzzy set with $T(x\sbin\mathcal{G})$ being its fuzzy membership. See section \ref{SectionSetBundleFuzzySet} for more detail.}
\end{lemma}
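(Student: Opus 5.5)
The argument is a direct unwinding of the definitions, with Lemma \ref{29} doing most of the work. By Definition \ref{DefMembershipSetbundle}, the membership $x\sbin\left.\mathcal{G}\right\vert_{r}$ is the predicate of $\mathfrak{M}$ with representation $\widetilde{\mathcal{P}}(x\sbin\mathcal{G})=\{\left.\mathscr{P}\right\vert_{r},\bigcup\mathcal{G}\}$, where $\mathscr{P}$ consists of the formal predicates $p(u)\Leftrightarrow u\in Y$ for $Y\in\mathcal{G}$. By Definition \ref{DefBelief}, the membership degree is
\[
T(x\sbin\mathcal{G})\,=\,r\left(\left[x\sbin\mathcal{G}\right]\right).
\]
So the task is to identify the central bundle and interpret its weight.

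\textbf{Step 1: identify the central bundle.} Lemma \ref{29} gives $\left[x\sbin\mathcal{G}\right]=\{Y\colon x\in Y\,\wedge\,Y\in\mathcal{G}\}$. This is exactly the sub-bundle of $\left.\mathcal{G}\right\vert_{r}$ made up of the sets containing $x$. It uses the natural identification of each $p\in\mathscr{P}$ with its set $Y$, under which the weight of $p$ is $r(Y)$.

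\textbf{Step 2: read the weight as a percentage.} Because $r(\mathcal{G})=1$ (Axiom \ref{1}(ii)), the quantity $r(\mathcal{G}_0)$ for a sub-bundle $\mathcal{G}_0$ is its fraction of the total weight. Corollary \ref{50}(iv) places this fraction in $[0,1]$, and countable additivity (Axiom \ref{1}(iii)) makes it behave as a proportion. Applying this to $\mathcal{G}_0=\left[x\sbin\mathcal{G}\right]$ shows that $T(x\sbin\mathcal{G})=r(\{Y\in\mathcal{G}\colon x\in Y\})$ is the weighted percentage of sets in $\left.\mathcal{G}\right\vert_{r}$ containing $x$. In the countable case this equals $\sum_{Y\ni x}r(Y)$.

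\textbf{Main obstacle.} The only delicate point is justifying the transfer of the weight measure $r$ from $\mathcal{G}$ to $\mathscr{P}$. This needs the correspondence $Y\mapsto p_Y$ to be a bijection, which holds because predicates are taken up to equivalence and distinct sets give inequivalent predicates $u\in Y$. One must also check that $\left[x\sbin\mathcal{G}\right]$ lies in $\sigma(\mathcal{G})$ so that its weight is defined. I would state this measurability as an implicit standing assumption, as the paper does for all central bundles, rather than try to prove it.
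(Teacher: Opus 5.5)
Your proposal is correct and follows the paper's own argument. The paper likewise identifies $\left[x\sbin\mathcal{G}\right]$ as the sets of $\mathcal{G}$ containing $x$, directly from Definition \ref{DefMembershipSetbundle} (the content of Lemma \ref{29}), and then reads off $T(x\sbin\mathcal{G})=r(\left[x\sbin\mathcal{G}\right])$ from Definition \ref{DefBeliefSetbundle}. Your remarks on the $Y\mapsto p_Y$ identification and on measurability only make explicit what the paper leaves tacit.
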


\begin{proof}
	\ \ By definition \ref{DefMembershipSetbundle}, $\left[x\sbin\mathcal{G}\right]$ is all sets in $\left.\mathcal{G}\right\vert_{r}$ that contain $x$. So by definition \ref{DefBeliefSetbundle}, $T(x\sbin\mathcal{G})$ is the percentage (weight) of the sets in $\left.\mathcal{G}\right\vert_{r}$\ that contain $x$. \bigskip
\end{proof}

Next, we will discuss membership degrees of intersection and union of two set bundles.

\begin{theorem}
	\label{39} \ Suppose $\vv{\mathcal{G}}\vert_{r}=\left.\left(\mathcal{G}_{1} \times\cdots\times \mathcal{G}_{n}\right)\right\vert_{r}$, $\mathcal{C}_{1},\mathcal{C}_{2}\in\sigma(\vv{\mathcal{G}})$.\footnote{In the rest discussion, we assume that $\mathcal{G}_{1},\cdots, \mathcal{G}_{n}$ are set bundles, and $\mathcal{C}_{1}, \cdots, \mathcal{C}_{n}$ are in $\sigma(\vv{\mathcal{G}})$.}
\end{theorem}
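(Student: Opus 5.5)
Since the enumerated clauses of Theorem \ref{39} follow the statement, I will plan for what I expect them to assert: membership degrees of $x$ in $\mathcal{C}_{1}\cap\mathcal{C}_{2}$ and $\mathcal{C}_{1}\cup\mathcal{C}_{2}$, namely $T(x\sbin\mathcal{C}_{1}\cap\mathcal{C}_{2})\leqslant\min\{T(x\sbin\mathcal{C}_{1}),T(x\sbin\mathcal{C}_{2})\}$, the dual bound for the union, and the inclusion--exclusion identity
\[
T(x\sbin\mathcal{C}_{1}\cup\mathcal{C}_{2})\,=\,T(x\sbin\mathcal{C}_{1})+T(x\sbin\mathcal{C}_{2})-T(x\sbin\mathcal{C}_{1}\cap\mathcal{C}_{2}).
\]
The strategy reduces every such statement to an identity between central bundles, which are classical sets. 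The finitely additive weight measure of axiom \ref{1} and corollary \ref{50} then yields the numerical relations.

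The first step is to identify the central bundles. Here a member of $\vv{\mathcal{G}}$ is a tuple $(X_{1},\cdots,X_{n})$, and I read \textquotedblleft $x$ lies in it\textquotedblright\ as $x\in X_{1}\times\cdots\times X_{n}$. Lemma \ref{29}, applied to the sub-bundles $\mathcal{C}_{1}$, $\mathcal{C}_{2}$, $\mathcal{C}_{1}\cap\mathcal{C}_{2}$ and $\mathcal{C}_{1}\cup\mathcal{C}_{2}$ with the restricted weight $r$, gives
\[
[x\sbin\mathcal{C}_{1}\cap\mathcal{C}_{2}]=\{Y\in\mathcal{C}_{1}\cap\mathcal{C}_{2}\colon x\in Y\}=[x\sbin\mathcal{C}_{1}]\cap[x\sbin\mathcal{C}_{2}],
\]
and likewise $[x\sbin\mathcal{C}_{1}\cup\mathcal{C}_{2}]=[x\sbin\mathcal{C}_{1}]\cup[x\sbin\mathcal{C}_{2}]$. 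Each of these follows by unwinding membership in the defining condition, exactly as in the proof of theorem \ref{13}.

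The second step applies the measure. By corollary \ref{50}(iii), $T(x\sbin\mathcal{C}_{1}\cap\mathcal{C}_{2})=r([x\sbin\mathcal{C}_{1}]\cap[x\sbin\mathcal{C}_{2}])$ is bounded by each of $r([x\sbin\mathcal{C}_{i}])$. For the union, I decompose it disjointly as
\[
[x\sbin\mathcal{C}_{1}]\cup\bigl([x\sbin\mathcal{C}_{2}]-[x\sbin\mathcal{C}_{1}]\cap[x\sbin\mathcal{C}_{2}]\bigr),
\]
apply additivity from axiom \ref{1}(iii), and then use corollary \ref{50}(ii). This mirrors the proof of theorem \ref{34} and gives the inclusion--exclusion formula; the $\max$ and $\leqslant 1$ bounds follow from corollary \ref{50}(iii),(iv). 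If the theorem also contains a complement clause, it follows from lemma \ref{11} transported to set bundles: the central bundle of non-membership is $\mathcal{C}-[x\sbin\mathcal{C}]$.

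I expect the main obstacle to be measurability and normalization rather than the algebra. The central bundles must lie in $\sigma(\vv{\mathcal{G}})$, and $T(x\sbin\mathcal{C}_{i})$ should be taken as $r([x\sbin\mathcal{C}_{i}])$ with the ambient $r$, not renormalized by $r(\mathcal{C}_{i})$. I would state both as standing conventions, noting that the sets involved are intersections of the $\mathcal{C}_{i}$ with $[x\sbin\vv{\mathcal{G}}]$ and so lie in the $\sigma$-field.
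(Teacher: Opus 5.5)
Your first step is the paper's own argument for clauses (i) and (ii). You unwind Lemma \ref{29} to get $Y\in[x\sbin\mathcal{C}_{1}\cup\mathcal{C}_{2}]\Longleftrightarrow x\in Y\wedge Y\in\mathcal{C}_{1}\cup\mathcal{C}_{2}$, and then distribute; intersection is handled the same way.

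However, you guessed the wrong content for the theorem. Its five clauses are purely set-theoretic identities between central bundles, with no weights involved:
\begin{enumerate}
\item the union identity;
\item the intersection identity;
\item $\mathcal{C}_{2}\subset\mathcal{C}_{1}\Longrightarrow[x\sbin\mathcal{C}_{1}-\mathcal{C}_{2}]=[x\sbin\mathcal{C}_{1}]-[x\sbin\mathcal{C}_{2}]$;
\item $\mathcal{C}_{2}\subset\mathcal{C}_{1}\Longrightarrow[x\sbin\mathcal{C}_{2}]\subset[x\sbin\mathcal{C}_{1}]$;
\item $[x\sbin\mathcal{C}_{1}\cup\mathcal{C}_{2}]=[x\sbin\mathcal{C}_{1}]\cup\bigl([x\sbin\mathcal{C}_{2}]-[x\sbin\mathcal{C}_{1}]\cap[x\sbin\mathcal{C}_{2}]\bigr)$.
\end{enumerate}
Your second step (the $\min$/$\max$ bounds and inclusion--exclusion) is correct, but it is what the paper proves afterwards as Corollaries \ref{40} and \ref{31}, from this theorem plus Lemma \ref{14}. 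Your measurability and normalization concerns are therefore moot for the theorem itself.

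As a proof of the actual statement, the proposal omits (iii) and (iv). Clause (v) appears only implicitly, as the disjoint decomposition you wrote down. These gaps are easy to close.
\begin{itemize}
\item The paper gets (iii) by writing $\mathcal{C}_{1}=\mathcal{C}_{2}\cup(\mathcal{C}_{1}-\mathcal{C}_{2})$ as a disjoint union. By (i) and (ii), $[x\sbin\mathcal{C}_{1}]$ then splits disjointly into $[x\sbin\mathcal{C}_{2}]$ and $[x\sbin\mathcal{C}_{1}-\mathcal{C}_{2}]$.
\item It reads (iv) off that same splitting.
\item It obtains (v) from $\mathcal{C}_{1}\cup\mathcal{C}_{2}=\mathcal{C}_{1}\cup(\mathcal{C}_{2}-\mathcal{C}_{1}\cap\mathcal{C}_{2})$ together with (i)--(iii).
\end{itemize}
Alternatively, your membership-unwinding gives (iii) in one line, even without the hypothesis $\mathcal{C}_{2}\subset\mathcal{C}_{1}$. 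Indeed, $Y\in[x\sbin\mathcal{C}_{1}]-[x\sbin\mathcal{C}_{2}]$ iff $x\in Y$, $Y\in\mathcal{C}_{1}$ and $\urcorner(x\in Y\wedge Y\in\mathcal{C}_{2})$, which holds iff $x\in Y\wedge Y\in\mathcal{C}_{1}-\mathcal{C}_{2}$. Clause (iv) is then immediate.
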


\begin{enumerate}
	\item $\left[x\sbin\mathcal{C}_{1}\cup\mathcal{C}_{2}\right]  \,=\,\left[x\sbin\mathcal{C}_{1}\right] \cup \left[x\sbin\mathcal{C}_{2}\right]$
	
	\item $\left[x\sbin\mathcal{C}_{1}\cap\mathcal{C}_{2}\right]  \,=\,\left[x\sbin\mathcal{C}_{1}\right] \cap \left[x\sbin\mathcal{C}_{2}\right]$
	
	\item $\mathcal{C}_{2}\subset\mathcal{C}_{1}\,\Longrightarrow\,\left[x\sbin\mathcal{C}_{1}-\mathcal{C}_{2}\right] \,=\, \left[x\sbin\mathcal{C}_{1}\right] -\left[x\sbin\mathcal{C}_{2}\right]$
	
	\item $\mathcal{C}_{2}\subset\mathcal{C}_{1}\,\Longrightarrow\,\left[x\sbin\mathcal{C}_{2}\right] \subset \left[x\sbin\mathcal{C}_{1}\right]$
	
	\item $\left[x\sbin\mathcal{C}_{1}\cup\mathcal{C}_{2}\right]  \,=\,\left[x\sbin\mathcal{C}_{1}\right] \cup \left(\left[x\sbin\mathcal{C}_{2}\right] -\left[x\sbin\mathcal{C}_{1}\right] \cap\left[x\sbin\mathcal{C}_{2}\right]\right)$
\end{enumerate}

\begin{proof}
	\ (i) \ By lemma \ref{29}
	\begin{align*}
		Y\in\left[x\sbin\mathcal{C}_{1}\cup\mathcal{C}_{2}\right] & \: \Longleftrightarrow \: x\in Y\,\wedge\,Y\in\mathcal{C}_{1} \cup\mathcal{C}_{2}
		\\
		& \: \Longleftrightarrow \: \left(x\in Y\wedge\,Y\in\mathcal{C}_{1}\right) \,\vee\, \left(x\in Y\wedge\,Y\in\mathcal{C}_{2}\right) 
		\\
		& \: \Longleftrightarrow \: Y\in\left[x\sbin\mathcal{C}_{1}\right]  \cup \left[x\sbin\mathcal{C}_{2}\right]
	\end{align*}
	
	(ii) is proved similar to (i). \medskip
	
	(iii) \ Since $\,\mathcal{C}_{1}=\mathcal{C}_{2}\cup\left(\mathcal{C}_{1}-\mathcal{C}_{2}\right)\,$ and $\,\mathcal{C}_{2}\cap\left(\mathcal{C}_{1}-\mathcal{C}_{2}\right)=\varnothing$, by (i) and (ii)
	\[
	\left[x\sbin\mathcal{C}_{1}\right] = \left[x\sbin\mathcal{C}_{2}\right] \cup \left[x\sbin\mathcal{C}_{1} -\mathcal{C}_{2}\right] \text{\ \ and \ }\left[x\sbin\mathcal{C}_{2}\right] \cap \left[x\sbin\mathcal{C}_{1} -\mathcal{C}_{2}\right] =\, \varnothing
	\]
	
	So $\left[x\sbin\mathcal{C}_{1}-\mathcal{C}_{2}\right] = \left[x\sbin\mathcal{C}_{1}\right] -\left[x\sbin\mathcal{C}_{2}\right]$. \medskip
	
	(iv) \ By (iii).\medskip
	
	(v) \ Since $\,\mathcal{C}_{1}\cup\mathcal{C}_{2} \,=\,\mathcal{C}_{1}\cup\left(\mathcal{C}_{2}-\mathcal{C}_{1} \cap\mathcal{C}_{2} \right)$, it follows by (i), (ii) and (iii). \smallskip
\end{proof}

\begin{corollary}
	\label{40}\qquad
\end{corollary}

\begin{enumerate}
	\item $T(x\sbin\mathcal{C}_{1}\cup\mathcal{C}_{2})\,=\,T(x\sbin\mathcal{C}_{1}) + T(x\sbin\mathcal{C}_{2}) - T(x\sbin\mathcal{C}_{1}\cap\mathcal{C}_{2})$
	
	\item $T(x\sbin\mathcal{C}_{1}\cup\mathcal{C}_{2})\,=\,T(x\sbin\mathcal{C}_{1}) + T(x\sbin\mathcal{C}_{2}) - r\left(\left[x\sbin\mathcal{C}_{1}\right] \cap\left[x\sbin\mathcal{C}_{2}\right]\right)$
\end{enumerate}

\begin{proof}
	\ (i) \ By definition \ref{DefBelief}, lemma \ref{14} and theorem \ref{39}.\medskip
	
	(ii) \ By (i) and theorem \ref{39}(ii).\smallskip
\end{proof}

\begin{corollary}
	\label{31}\qquad
\end{corollary}

\begin{enumerate}
	\item $T(x\sbin\mathcal{C}_{1}\cap\mathcal{C}_{2})\,\leqslant\,\min\{T(x\sbin\mathcal{C}_{1}),\,T(x\sbin\mathcal{C}_{2})\}$
	
	\item $T(x\sbin\mathcal{C}_{1}\cup\mathcal{C}_{2})\,\geqslant\,\max\{T(x\sbin\mathcal{C}_{1}),\,T(x\sbin\mathcal{C}_{2})\}$
\end{enumerate}

\begin{proof}
	\ By lemma \ref{14} and theorem \ref{39}.\smallskip
\end{proof}

\begin{corollary}
	\label{42} \ If \ $r(\left[x\sbin\mathcal{C}_{1}\right] \cap \left[x\sbin\mathcal{C}_{2}\right]) \,=\, r(\left[x\sbin\mathcal{C}_{1}\right]) \, r(\left[x\sbin\mathcal{C}_{2}\right])$, then
\end{corollary}

\begin{enumerate}
	\item $T(x\sbin\mathcal{C}_{1}\cap\mathcal{C}_{2})\,=\,T(x\sbin\mathcal{C}_{1})\,T(x\sbin\mathcal{C}_{2})$
	
	\item $T(x\sbin\mathcal{C}_{1}\cup\mathcal{C}_{2})\,=\,1-\left(1-T(x\sbin\mathcal{C}_{1})\right) \hspace{-0.02in} \left(1-T(x\sbin\mathcal{C}_{2})\right)$
\end{enumerate}

\begin{proof}
	\ (i) \ By theorem \ref{39}(ii).\medskip
	
	(ii) \ By (i) and corollary \ref{40}(ii).\smallskip
\end{proof}

\begin{corollary}
	\label{49} \ In general, $T(x\sbin\mathcal{C}_{1}\cap\mathcal{C}_{2})$ and $\,T(x\sbin\mathcal{C}_{1}\cup\mathcal{C}_{2})$ can not be expressed by functions of $\,T(x\sbin\mathcal{C}_{1})$ and $\,T(x\sbin\mathcal{C}_{2})$. 
\end{corollary}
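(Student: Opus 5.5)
The plan is to read the statement as a non-functionality claim and refute it with a concrete counterexample. I will exhibit two situations in which the pair $\bigl(T(x\sbin\mathcal{C}_{1}),\,T(x\sbin\mathcal{C}_{2})\bigr)$ takes the same value while $T(x\sbin\mathcal{C}_{1}\cap\mathcal{C}_{2})$ takes different values. By corollary \ref{40}(i), $T(x\sbin\mathcal{C}_{1}\cup\mathcal{C}_{2})$ then also differs, since it equals $T(x\sbin\mathcal{C}_{1})+T(x\sbin\mathcal{C}_{2})-T(x\sbin\mathcal{C}_{1}\cap\mathcal{C}_{2})$ and the first two terms are fixed. So a single construction handles both intersection and union, and the whole argument reduces to showing that $r\left(\left[x\sbin\mathcal{C}_{1}\right]\cap\left[x\sbin\mathcal{C}_{2}\right]\right)$ is not determined by the two marginal weights. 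Theorem \ref{39}(ii) identifies this quantity with $T(x\sbin\mathcal{C}_{1}\cap\mathcal{C}_{2})$.

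For the construction, I take a finite set bundle $\mathcal{G}=\{Y_1,Y_2,Y_3,Y_4\}$ with $r(Y_k)=1/4$, so $n=1$ and $\vv{\mathcal{G}}=\mathcal{G}$. The universe is a point $x$ together with auxiliary points, and $Y_1=Y_2=\ldots$ is avoided by giving each $Y_k$ a distinct auxiliary point so the sets are distinct. I let $x\in Y_1,Y_2$ and $x\notin Y_3,Y_4$, so by lemma \ref{29} $\left[x\sbin\mathcal{G}\right]=\{Y_1,Y_2\}$.

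In the first case, choose $\mathcal{C}_1=\mathcal{C}_2=\{Y_1,Y_3\}$. Then $T(x\sbin\mathcal{C}_i)=1/4$ by lemma \ref{43}, and $T(x\sbin\mathcal{C}_1\cap\mathcal{C}_2)=1/4$.

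In the second case, choose $\mathcal{C}_1=\{Y_1,Y_3\}$ and $\mathcal{C}_2=\{Y_2,Y_4\}$. The marginals are again $1/4$ each, but $\mathcal{C}_1\cap\mathcal{C}_2=\varnothing$, so the intersection degree is $0$. The union degrees are $1/4$ and $1/2$ respectively.

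Hence no function $f$ satisfies $T(x\sbin\mathcal{C}_1\cap\mathcal{C}_2)=f(T(x\sbin\mathcal{C}_1),T(x\sbin\mathcal{C}_2))$ for all choices, and likewise no such function exists for the union.

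The only subtle point, which I expect to be the main obstacle in writing it up cleanly, is interpretive. Membership degrees here are weights $r(\cdot)$ of sub-collections, measured with the ambient $r$ on $\sigma(\vv{\mathcal{G}})$ and not renormalised on $\mathcal{C}_i$. This is how corollary \ref{40} uses them, so I will follow that convention explicitly. I will also remark that under irrelevance (corollary \ref{42}) a formula does exist, which shows that the dependence on the joint weight is genuine.
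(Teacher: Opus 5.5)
Your proposal is correct and follows the same strategy as the paper. You exhibit configurations with equal marginals $T(x\sbin\mathcal{C}_{1})$ and $T(x\sbin\mathcal{C}_{2})$ but different $T(x\sbin\mathcal{C}_{1}\cap\mathcal{C}_{2})$, transfer the conclusion to the union via corollary \ref{40}(i), and use the same non-renormalised weight convention (the paper takes $T(20\sbin\mathcal{C}_1)=r(\mathcal{C}_1)=0.5$). The only difference is the choice of example: the paper uses the \textit{young} bundle of problem \ref{71} at $x=20$ and slides a ten-set window $\mathcal{C}_2$ so the intersection degree runs through $0,0.05,\ldots,0.5$, whereas your minimal four-set bundle with two configurations suffices just as well.
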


\begin{proof}
	\ Suppose $\left.\mathcal{G}\right\vert_{r}$ is given in problem \ref{71}. Let $\mathcal{C}_{1}=\{I_{n}\colon1\leqslant n\leqslant 10\} \in\sigma(\mathcal{G})$ and $\mathcal{C}_{2}=\{I_{n}\colon1+k\leqslant n\leqslant 10+k\,\wedge\,0\leqslant k\leqslant10\}\in \sigma(\mathcal{G})$. Then $r(\mathcal{C}_{1})=0.5$ and for any $0\leqslant k\leqslant10$, $r(\mathcal{C}_{2})=0.5$. So $T(20\sbin\mathcal{C}_{1})\,=\,T(20\sbin\mathcal{C}_{2})\,=\,0.5$. \medskip
	
	For any function $f$,  $f(T(20\sbin\mathcal{C}_{1}),\,T(20\sbin\mathcal{C}_{2}))$ has only one value. But $T(20\sbin\mathcal{C}_{1} \cap \mathcal{C}_{2})$ can range over $0$, $0.05$, $0.1$, $\cdots,0.5$. Thus $\,T(x\sbin\mathcal{C}_{1} \cap \mathcal{C}_{2})\neq f(T(x\sbin\mathcal{C}_{1}),\,T(x\sbin\mathcal{C}_{2}))$. $T(x\sbin\mathcal{C}_{1}\cup\mathcal{C}_{2})$ follows by corollary \ref{40}.\bigskip
\end{proof}

\subsection{Discrete Bundles and Belief Distributions}

In this section, we will study discrete set bundles and belief distributions. 

\begin{definition}
	\ A \textbf{discrete set bundle} is a collection of countably many sets with a weight measure. A \textbf{discrete belief  distribution} is a mathematical function that gives the beliefs in a discrete set bundle representation.
\end{definition}

\begin{theorem}
	\label{70} \ Suppose $\left.\mathcal{G}\right\vert_{r}\,=\,\{I_{n}\colon r(I_{n})=r_{n}\,\wedge\,n\in \mathbb{N}\}$ where $I_{n}$ are intervals on $\Bbb{R}$, $\alpha$ is a bounded non-decreasing right continuous function and $\beta$ a bounded non-increasing left continuous function in $\Bbb{R}^{\Bbb{N}}$, and $x\in\Bbb{R}$.
\end{theorem}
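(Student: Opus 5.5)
Since the enumerated conclusions of Theorem \ref{70} follow the statement, I will plan for the form they almost certainly take. Item one: $I_n=(-\infty,\alpha(n)]$, or $I_n=[a,\alpha(n)]$ with a fixed left endpoint, as in the \emph{young} example. Item two: the mirror case $I_n=[\beta(n),\infty)$, or $[\beta(n),b]$. In each case the claim should be an explicit formula for $T(x\sbin\mathcal{G})$ as a tail sum of the $r_n$, together with the monotonicity of this belief distribution in $x$.

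The first step in every case is Lemma \ref{29} together with Lemma \ref{43}. These give $[x\sbin\mathcal{G}]=\{I_n\colon x\in I_n\}$, so the problem reduces to describing the index set $N_x=\{n\in\Bbb{N}\colon x\in I_n\}$. For $I_n=(-\infty,\alpha(n)]$ we have $N_x=\{n\colon \alpha(n)\geqslant x\}$. Because $\alpha$ is non-decreasing in $n$, $N_x$ is a terminal segment $\{n\colon n\geqslant n_x\}$, where $n_x=\min\{n\colon\alpha(n)\geqslant x\}$; it is empty if no such $n$ exists, and it is all of $\Bbb{N}$ if $x\leqslant\alpha(1)$. Since $[x\sbin\mathcal{G}]$ is a countable disjoint union of singletons $\{I_n\}$, Axiom \ref{1}(iii) gives
\[
T(x\sbin\mathcal{G})\,=\,r([x\sbin\mathcal{G}])\,=\,\sum_{n\geqslant n_x} r_n .
\]
This equals $1$ when $x\leqslant\inf\alpha$ and $0$ when $x>\sup\alpha$; the boundedness of $\alpha$ guarantees that both regimes actually occur. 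The $\beta$ case is symmetric: $N_x=\{n\colon\beta(n)\leqslant x\}$ is again a terminal segment because $\beta$ is non-increasing, and the same summation applies.

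Next I will prove monotonicity in $x$. If $x\leqslant x'$ then $N_{x'}\subset N_x$ in the $\alpha$ case, so by Corollary \ref{50}(iii) the belief distribution is non-increasing. In the $\beta$ case the inclusion reverses and the distribution is non-decreasing. This matches the \emph{young}/\emph{old} intuition of the introduction.

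The step I expect to be the main obstacle is the continuity and endpoint claims, if the theorem asserts them, for instance that $T$ is left-continuous in $x$ (or right-continuous for $\beta$). This is where the one-sided continuity hypotheses on $\alpha$ and $\beta$ must enter, and I am least sure exactly how the paper uses them. My approach: for $x_k\uparrow x$, show $N_{x_k}\downarrow N_x$, i.e.\ $\bigcap_k\{n\colon\alpha(n)\geqslant x_k\}=\{n\colon\alpha(n)\geqslant x\}$, which holds because the inequality is closed. Then use continuity from above of the measure, which follows from countable additivity (Axiom \ref{1}) applied to the disjoint differences $N_{x_k}-N_{x_{k+1}}$ together with Corollary \ref{50}(ii). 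If the statement instead concerns half-open intervals, I would redo the same set-limit argument with strict inequalities, and invoke the right/left continuity of $\alpha$ or $\beta$ to control the boundary term at $n_x$.
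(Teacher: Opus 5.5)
You have not proved the theorem as stated. For a fixed-left-endpoint interval $[c,\alpha(n)]$ your argument matches the paper's proof of item (i): take $k$ least with $x\leqslant\alpha(k)$, use that $\alpha$ is non-decreasing to get $x\in I_n$ for $n\geqslant k$ and minimality of $k$ for $n<k$, then sum the weights. The other two items, however, are not the ones you planned for.

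Item (ii) is $I_n=[\alpha(n),c]$ with $a\leqslant\alpha(n)\leqslant b\leqslant c$. The moving left endpoint here is the \emph{non-decreasing} $\alpha$. So for $x\leqslant c$ the index set $\{n\colon \alpha(n)\leqslant x\}$ is an \emph{initial} segment $\{n\leqslant k\}$, with $k$ the largest index such that $\alpha(k)\leqslant x$. The asserted value is $\sum_{n\leqslant k}r_n$ on $[a,b]$, $1$ on $[b,c]$, and $0$ outside $[a,c]$. Your blanket claim that every case gives a tail sum $\sum_{n\geqslant n_x}r_n$ is false here, and your $[\beta(n),\infty)$ case with non-increasing $\beta$ is a different statement.

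The substantive omission is item (iii): $I_n=[\alpha(n),\beta(n)]$ with $a\leqslant\alpha(n)\leqslant b\leqslant c\leqslant\beta(n)\leqslant d$. Membership imposes two constraints at once, and your one-endpoint analysis never considers this. The missing idea is that the separation $b\leqslant c$ decouples the constraints:
\begin{itemize}
\item[] For $a\leqslant x\leqslant b$, the right constraint is automatic since $\beta(n)\geqslant c\geqslant b\geqslant x$. So $[x\sbin\mathcal{G}]=\{I_n\colon \alpha(n)\leqslant x\}=\{I_n\colon n\leqslant k\}$.
\item[] For $c\leqslant x\leqslant d$, the left constraint is automatic since $\alpha(n)\leqslant b\leqslant c\leqslant x$. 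The set $\{n\colon x\leqslant\beta(n)\}$ is an initial segment $\{n\leqslant m\}$ because $\beta$ is non-increasing.
\item[] For $b\leqslant x\leqslant c$, both constraints hold for every $n$, so $T=1$.
\item[] For $x<a$ or $x>d$, neither constraint can hold, so $T=0$.
\end{itemize}
The paper proves (iii) this way and obtains (ii) as the special case $\beta\equiv c$ (so $d=c$).

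The continuity and monotonicity-in-$x$ claims you single out as the main obstacle are not part of the theorem. The one-sided continuity hypotheses on $\alpha$ and $\beta$, which are only evaluated at integers, are never used. On the other hand, your explicit treatment of the case where no extremal index exists (index set empty or all of $\Bbb{N}$) is more careful than the paper's, which tacitly assumes $k$ and $m$ exist.
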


\begin{enumerate}
	\item \textit{If $I_{n}=[c,\alpha(n)]\,\left(c\leqslant a\leqslant\alpha(n)\leqslant b\right)$, let $k$ be the least integer that $\,c\leqslant x\leqslant\alpha(k)$. Then}
	\begin{align*}
		T(x\sbin\mathcal{G})  &  \:=\:\sum_{n\geqslant k} r_{n},\text{ \ \ \ \ \ }a\leqslant x\leqslant b
		\\
		& \:=\:1,\qquad\qquad c\leqslant x\leqslant a
		\\
		& \:=\:0,\qquad\qquad x<c\,\vee\,x>b
	\end{align*}
	\item \textit{If $I_{n}=[\alpha(n),c]\,\left(a\leqslant\alpha(n)\leqslant b\leqslant c\right)$, let $k$ be the biggest integer that $\,\alpha(k)\leqslant x\leqslant c$. Then}
	\begin{align*}
		T(x\sbin\mathcal{G})  &  \:=\:\sum_{n\leqslant k}r_{n},\text{ \ \ \ \ \ }a\leqslant x\leqslant b
		\\
		& \:=\:1,\qquad\qquad b\leqslant x\leqslant c
		\\
		& \:=\:0,\qquad\qquad x<a\,\vee\,x>c
	\end{align*}
	\item \textit{If $I_{n}=\left[\alpha(n),\,\beta(n)\right]$ and $\,a\leqslant\alpha(n)\leqslant b\leqslant c\leqslant\beta(n) \leqslant d$, let $k$ be the biggest integer that $\,\alpha(k)\leqslant x\leqslant b$ and $\,m$ be the biggest integer that $\,c\leqslant x\leqslant\beta(m)$. Then}
	\begin{align*}
		T(x\sbin\mathcal{G})  & \:=\:\sum_{n\leqslant k}r_{n},\text{ \ \ \ \ \ }a\leqslant x\leqslant b 
		\\
		& \:=\:\sum_{n\leqslant m}r_{n},\text{ \ \ \ \ }\,c\leqslant x\leqslant d
		\\
		& \:=\:1,\qquad\qquad \;b\leqslant x\leqslant c
		\\
		& \:=\:0,\qquad\qquad \;x<a\,\vee\,x>d
	\end{align*}
\end{enumerate}

\begin{proof}
	\ (i) \ If $a\leqslant x\leqslant b$, $x\in I_{n}$ for any $n\geqslant k$ since $\alpha(n)\geqslant\alpha(k)$. Also $x\notin I_{n}$ for any $n<k$. So by definition \ref{DefMembershipSetbundle} and \ref{DefBeliefSetbundle}, $\left[x\sbin\mathcal{G}\right] \,=\,\{I_{n}\colon n\geqslant k\}$ and $T(x\sbin\mathcal{G})\,=\underset{n\geqslant k}{\sum}r_{n}$. If $c\leqslant x\leqslant a$, $\left[x\sbin\mathcal{G}\right] =\mathcal{G}$ and $T(x\sbin\mathcal{G})=1$. If $x<c$ or $x>b$, $\left[x\sbin\mathcal{G}\right] =\varnothing$ and $T(x\sbin\mathcal{G})=0$. \medskip

	(ii) is a special case of (iii).\medskip
	
	(iii) \ First, if $\alpha(k)\leqslant x\leqslant b$, $x\in I_{n}$ for any $n\leqslant k$ since $\alpha(n)\leqslant\alpha(k)$, and $x\notin I_{n}$ for any $n>k$. So $\left[x\sbin\mathcal{G}\right]  \,=\,\{I_{n}\colon n\leqslant k\}$ and $T(x\sbin\mathcal{G}) \,=\underset{n\leqslant k}{\sum}r_{n}$. If $c\leqslant x\leqslant\beta(m)$, $x\in I_{n}$ for any $n\leqslant m$ since $\beta(n)\geqslant\beta(m)$, and $x\notin I_{n}$ for any $n>m$. So $\left[x\sbin \mathcal{G}\right] \,=\,\{I_{n}\colon n\leqslant m\}$ and $T(x\sbin\mathcal{G})\,=\underset{n\leqslant m}{\sum}r_{n}$. Obviously, for $b\leqslant x\leqslant c$, $T(x\sbin\mathcal{G})\,=\,1$, and for $x<a$ or $x>d$, $T(x\sbin\mathcal{G})\,=\,0$. \bigskip
\end{proof}

\begin{figure}[h]
	\centering
	\includegraphics{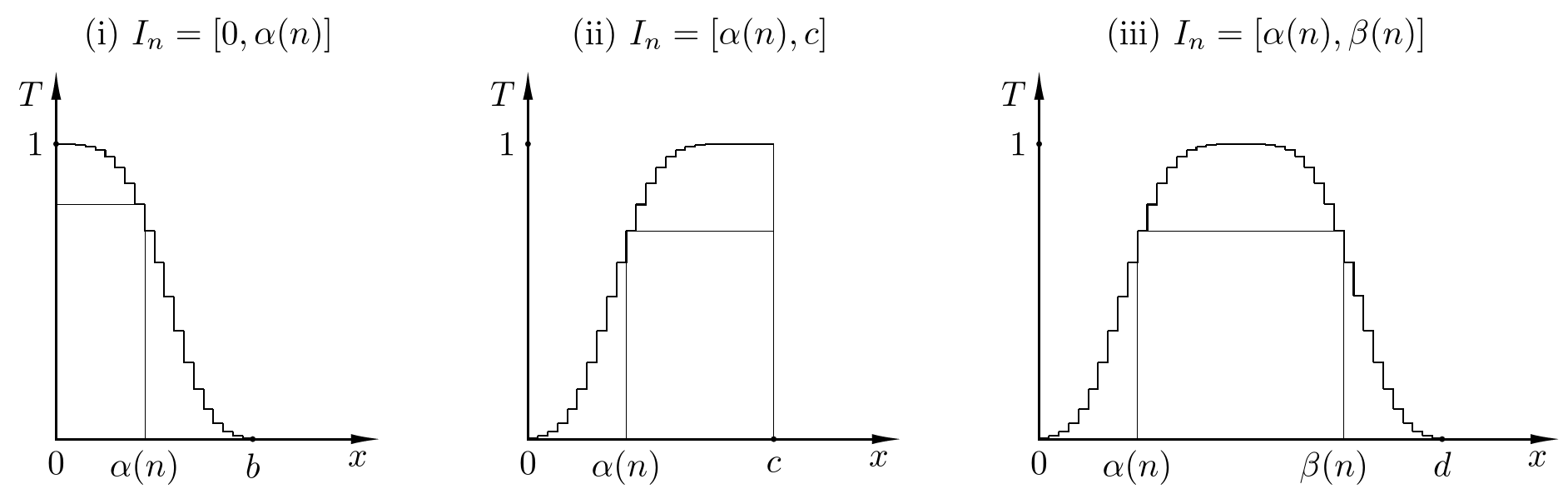}
	\caption{Diagrams of belief distributions in theorem \ref{29}.}
	\label{Fig3}
\end{figure}

The belief distributions ($T(x\sbin\mathcal{G})$ or $T(x)$) of theorem \ref{70} are shown in Figure \ref{Fig3}. These three types of discrete set bundles are important because they can model an adjective whose $T(x)$ is monotonically increasing (or decreasing). There are many such adjectives in linguistics as \textit{young}, \textit{old}, \textit{middle-aged}, and so on. In addition, the belief distributions for a non-singleton set are as follows.

\begin{theorem}
	\label{73}\ Suppose everything is the same as in theorem \ref{70} and $\widetilde{\mathcal{S}}(\phi(A))=(A, \left.\mathcal{G}\right\vert_{r},\subset)$ where $A$ is a non-singleton subset of $\Bbb{R}$. 
\end{theorem}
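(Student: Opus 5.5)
The plan is to reduce the inclusion $A\subset I_{n}$ to membership statements about the two endpoints $\inf A$ and $\sup A$, and then read off the formulas from theorem \ref{70}. Since the statement continues past what is shown here, I expect it to be the three cases of theorem \ref{70} with the singleton $\{x\}$ replaced by $A$. The key observation is that every $I_{n}$ is a closed interval. Hence for a nonempty $A\subset\Bbb{R}$ we have $A\subset[u,v]$ if and only if $u\leqslant\inf A$ and $\sup A\leqslant v$, whether or not the infimum and supremum are attained. When $A$ is unbounded, $A\subset I_{n}$ fails for every $n$, so $T(\phi(A))=0$. I will dispose of that case first and then assume $A$ is bounded.

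By definition \ref{DefCentralClass}, the central bundle is $\left[\phi(A)\right]=\{I_{n}\colon A\subset I_{n}\}$, and by definition \ref{DefBeliefSetbundle}, $T(\phi(A))=r(\left[\phi(A)\right])$. In case (i), $I_{n}=[c,\alpha(n)]$, so
\[
\left[\phi(A)\right]=\{I_{n}\colon c\leqslant\inf A\,\wedge\,\sup A\leqslant\alpha(n)\}.
\]
If $\inf A<c$ this bundle is empty and $T(\phi(A))=0$. Otherwise it coincides with $\left[\sup A\sbin\mathcal{G}\right]$ from lemma \ref{29}. Therefore $T(\phi(A))=T(\sup A\sbin\mathcal{G})$, and the formula of theorem \ref{70}(i) applies with $x=\sup A$ and $k$ the least integer with $\sup A\leqslant\alpha(k)$. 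Case (ii) is symmetric: the condition $\sup A\leqslant c$ is forced, otherwise $T(\phi(A))=0$, and then $T(\phi(A))=T(\inf A\sbin\mathcal{G})$, evaluated by theorem \ref{70}(ii) at $x=\inf A$.

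Case (iii), $I_{n}=[\alpha(n),\beta(n)]$, is where the two endpoints interact, and I expect it to be the main obstacle. Here the central bundle is the intersection
\[
\{I_{n}\colon\alpha(n)\leqslant\inf A\}\cap\{I_{n}\colon\sup A\leqslant\beta(n)\}.
\]
By theorem \ref{39}(ii), read as an intersection of central bundles, this equals $\left[\inf A\sbin\mathcal{G}\right]\cap\left[\sup A\sbin\mathcal{G}\right]$ whenever $\inf A\leqslant b$ and $\sup A\geqslant c$. By the monotonicity of $\alpha$ and $\beta$, each of these sets is an initial segment $\{n\leqslant k\}$, respectively $\{n\leqslant m\}$. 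Their intersection is therefore $\{n\leqslant\min(k,m)\}$, which gives $T(\phi(A))=\sum_{n\leqslant\min(k,m)}r_{n}=\min\{T(\inf A\sbin\mathcal{G}),T(\sup A\sbin\mathcal{G})\}$.

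The remaining work is to list the subcases according to where $\inf A$ and $\sup A$ fall relative to $a,b,c,d$. When both lie in $[b,c]$, $T=1$. When only one endpoint lies outside $[b,c]$, that endpoint alone determines the sum. When $\inf A<a$ or $\sup A>d$, $T=0$. Finally I need to verify that the notions of ``least'' and ``biggest'' integer $k,m$ used in theorem \ref{70} are well defined at the endpoints. This is where the right continuity of $\alpha$ and the left continuity of $\beta$ are used, and I would check it directly, taking care at the boundary points $b$ and $c$.
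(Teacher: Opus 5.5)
Your proposal is correct and is essentially the paper's argument. You reduce $A\subset I_{n}$ to conditions on the two endpoints of $A$ and use the monotonicity of $\alpha$ and $\beta$ to show that $\left[\phi(A)\right]$ is a final segment $\{I_{n}\colon n\geqslant k\}$ in (i), an initial segment $\{I_{n}\colon n\leqslant k\}$ in (ii), and in (iii) the intersection of two initial segments, $\{I_{n}\colon n\leqslant\min\{k,m\}\}$. The paper recomputes (i) and (ii) directly with $\min\{A\}$ and $\max\{A\}$ instead of reducing them to theorem \ref{70} through $\inf A$ and $\sup A$.

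One citation is wrong. Theorem \ref{39}(ii) concerns a single point and two sub-bundles, so it does not justify $\left[\phi(A)\right]=\left[\inf A\sbin\mathcal{G}\right]\cap\left[\sup A\sbin\mathcal{G}\right]$. That identity follows directly from lemma \ref{29} together with $\alpha(n)\leqslant b\leqslant c\leqslant\beta(n)$. Also, the continuity hypotheses play no role here, since $\alpha$ and $\beta$ are sequences indexed by $\Bbb{N}$.
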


\begin{enumerate}
	\item \textit{If $I_{n}=[c,\alpha(n)]\,\left(c\leqslant a\leqslant\alpha(n)\leqslant b\right)$, let $k$ be the least integer that $\,c\leqslant \max\{A\}\leqslant\alpha(k)$. Then}
	
	\begin{align*}
		T(\phi)  &  \:=\:\sum_{n\geqslant k} r_{n},\text{ \ \ \ \ }\,c\leqslant \min\{A\}  
		\\
		& \:=\:1,\qquad\qquad c\leqslant \min\{A\} <\max\{A\}\leqslant a  
		\\
		& \:=\:0,\qquad\qquad \min\{A\}<c\,\vee\,\max\{A\}>b
	\end{align*}
	\item \textit{If $I_{n}=[\alpha(n),c]\,\left(a\leqslant\alpha(n)\leqslant b\leqslant c\right)$, let $k$ be the biggest integer that $\,\alpha(k)\leqslant \min\{A\}\leqslant b$. Then}
	\begin{align*}
		T(\phi)  &  \:=\:\sum_{n\leqslant k}r_{n},\text{ \ \ \ \ }\max\{A\}\leqslant c
		\\
		& \:=\:1,\qquad\qquad b\leqslant \min\{A\} <\max\{A\}\leqslant c
		\\
		& \:=\:0,\qquad\qquad \min\{A\}<a\,\vee\,\max\{A\}>c
	\end{align*}
	\item \textit{If $I_{n}=\left[\alpha(n),\,\beta(n)\right]$ and $\,a\leqslant\alpha(n)\leqslant b\leqslant c\leqslant\beta(n) \leqslant d$, let $k$ be the biggest integer that $\,\alpha(k)\leqslant \min\{A\}\leqslant b$, $\,m$ be the biggest integer that $\,c\leqslant \max\{A\}\leqslant\beta(m)$, and $l=\min\{k,m\}$. Then}
	\begin{align*}
		T(\phi)  & \:=\:\sum_{n\leqslant l}r_{n},\text{ \ \ \ \ }\,\alpha(k)\leqslant \min\{A\}<\max\{A\}\leqslant \beta(m) 
		\\
		& \:=\:1,\qquad\qquad b\leqslant \min\{A\} <\max\{A\}\leqslant c
		\\
		& \:=\:0,\qquad\qquad \min\{A\}<a\,\vee\,\max\{A\}>d
	\end{align*}
\end{enumerate}

\begin{proof}
	\ (i) \ If $c\leqslant \min\{A\}<\max\{A\}\leqslant \alpha(k),\,A\subset I_{n}$ for any $n\geqslant k$ since $\alpha(n)\geqslant\alpha(k)$. Also, $A\not\subset I_{n}$ for any $n<k$. So by definition \ref{DefBeliefSetbundle}, $[\phi] =\{I_{n}\colon n\geqslant k\}$ and $T(\phi)=\underset{n\geqslant k}{\sum}r_{n}$. If $c\leqslant \min\{A\}$ and $\max\{A\}\leqslant a,\,[\phi] =\mathcal{G}$ and $T(\phi)=1$. If $\min\{A\}<c$ or $\max\{A\}>b,\,[\phi] =\varnothing$ and $T(\phi)=0$. \medskip
	
	(ii) \ If $\alpha(k)\leqslant \min\{A\}<\max\{A\}\leqslant c,\,A\subset I_{n}$ for any $n\leqslant k$ since $\alpha(n)\leqslant\alpha(k)$, and $A\not\subset I_{n}$ for any $n>k$. So $[\phi] =\{I_{n}\colon n\leqslant k\}$ and $T(\phi)=\underset{n\leqslant k}{\sum}r_{n}$. If $b\leqslant \min\{A\}$ and $\max\{A\}\leqslant c,\,[\phi] =\mathcal{G}$ and $T(\phi)=1$. If $\min\{A\}<a$ or $\max\{A\}>c,\,[\phi] =\varnothing$ and $T(\phi)=0$. \medskip
	
	(iii) \ Since $k,m\geqslant l$, for any $n\leqslant l,\,\alpha(n)\leqslant\alpha(l)$ and $\beta(n)\geqslant\beta(l)$, i.e. $A\subset I_n$. Likewise, for any $n>l,\,A\not\subset I_n$. So $[\phi] =\{I_{n}\colon n\leqslant l\}$ and $T(\phi)\,=\underset{n\leqslant l}{\sum}r_{n}$. Obviously, for $b\leqslant \min\{A\}$ and $\max\{A\}\leqslant c,\,T(\phi)=1$. If $\min\{A\}<a$ or $\max\{A\}>d,\,T(\phi)=0$. \bigskip
\end{proof}

Problem \ref{16} and \ref{32} can be done by set bundles as follows.

\begin{problem}
	\label{71} \ Suppose $\phi_{1}(x_{1})\rightleftharpoons\,$\textquotedblleft$\,x_{1}$ is young\textquotedblright\ ($x_{1}$ is an age for the human)\ and $\widetilde{\mathcal{S}}(\phi_{1}(x_{1}))=(\{x_{1}\},\left.\mathcal{G}_{1} \right\vert_{r_{1}},\subset)$, where $\left.\mathcal{G}_{1}\right\vert_{r_{1}}\,=\,\{I_{n}\colon I_{n}=[0,\alpha(n)] \,\wedge\, \alpha(n)=n+19 \,\wedge\,1\leqslant n\leqslant20\,\wedge\,r_{1}(I_{n})=0.05\,\}$. Find $T(\phi_{1}(x_{1}))$ and $T(\urcorner\phi_{1}(x_{1}))$ for $x_{1}\,=\,20$, $30$, $40\left(\text{years old}\right)$.
\end{problem}

\begin{proof}
	[Solution]\ \ Clearly, $20\leqslant\alpha(n)\leqslant39$. Since for each $I_{n}\in\mathcal{G}_{1},\,20\in I_{n},\,\left[  \phi_{1}(20)\right] =\mathcal{G}_{1}$. So by definition \ref{DefBeliefSetbundle}, $T(\phi_{1}(20))=1$. Also, $T(\phi_{1}(40))=0$ for $\left[\phi_{1}(40)\right]=\varnothing$. Since the least $n$ that $30\in I_{n}$ is $11$, by theorem \ref{70}(i), $\left[\phi_{1}(30)\right] \,=\,\{I_{n}\colon I_{n}\in\mathcal{G}_{1} \,\wedge\,11\leqslant n\leqslant20\}$ and $T(\phi_{1}(30)) =0.5$.\medskip
	
	Since $\urcorner\phi_{1}(x_{1})\,\rightleftharpoons$ \textquotedblleft$x_{1}$ is not young\textquotedblright, $\,\widetilde{\mathcal{S}}(\urcorner\phi_{1}(x_{1}))\,=\,\left(\{x_{1}\},\left.\mathcal{G}_{1}\right\vert_{r_{1}},\not \subset \right)$. So $\left[\urcorner\phi_{1}(20)\right]=\varnothing$, $\left[\urcorner\phi_{1}(40)\right] =\mathcal{G}_{1}$, $\left[\urcorner\phi_{1}(30)\right] \,=\,\mathcal{G}_{1}-\left[\phi_{1}(30)\right]$, and $T(\urcorner\phi_{1}(20))=0$, $T(\urcorner\phi_{1}(40))=1$, $T(\urcorner\phi_{1}(30))=0.5$.
\end{proof}

\begin{problem}
	\ Suppose $\phi_{1}(A)\rightleftharpoons\,$\textquotedblleft$A$ is young\textquotedblright\ ($A$ is a collection of ages for the human)\ and $\widetilde{\mathcal{S}}(\phi_{1}(A))=(A,\left.\mathcal{G}_{1} \right\vert_{r_{1}},\subset)$ where $\left.\mathcal{G}_{1}\right\vert_{r_{1}}$ is the same as in problem \ref{71}. Find $T(\phi_{1}(A))$ for $A\,=\,\{21, 28, 35\}$.
\end{problem}

\begin{proof}
	[Solution] \ Clearly, $\min\{A\}=21$ and $\max\{A\}=35$. Since $16$ is the least $n$ that $A\subset I_{n}$, $\left[\phi_{1}(A)\right]\,=\,\{I_n\colon I_{n}\in\mathcal{G}_{1}\,\wedge\,16\leqslant n\leqslant20\}$. So by theorem \ref{73}(i), $T(\phi_{1}(A))=0.25$. 
\end{proof}

\begin{problem}
	\label{72} \ Suppose $\phi_{2}(x_{2})\rightleftharpoons\,$\textquotedblleft $\,x_{2}$ is tall\textquotedblright\ ($x_{2}$ is a height for the human)\ and $\widetilde{\mathcal{S}}(\phi_{2}(x_{2}))=(\{x_{2}\},\left.\mathcal{G}_{2} \right\vert_{r_{2}},\subset)$, where $\left.\mathcal{G}_{2}\right\vert_{r_{2}}\,=\,\{I_{n}\colon I_{n}=[\alpha(n),3] \,\wedge\, \alpha(n) =n/100+1.65 \,\wedge\, 1\leqslant n\leqslant 20\,\wedge\,r_{2}(I_{n})=0.05\,\}$.\footnote{We assume that human height is no more than $3$ meters.} Find $T(\phi_{2}(x_{2}))$ for $x_{2}\,=\,1.7$, $1.75$, $1.8\left(m\right)$.
\end{problem}

\begin{proof}
	[Solution] \ Obviously, $1.66\leqslant\alpha(n)\leqslant1.85$. So by theorem \ref{70}(ii), $T(\phi_{2}(1.7))=0.25$, $\ T(\phi_{2}(1.75)) \,=\,0.5$, $\ T(\phi_{2}(1.8))\,=\,0.75$.\smallskip
\end{proof}

\subsection{Continuous Bundles and Belief Distributions}\label{ContinuousBundle}

In this section, we will investigate continuous set bundles.

\begin{definition}
	\ A \textbf{continuous set bundle} is a collection of uncountably many sets with a weight measure. A \textbf{continuous belief  distribution} is a mathematical function that gives the beliefs in a continuous set bundle representation.
\end{definition}

First note that the weight measure for a single set in a continuous bundle $\mathcal{G}\vert_{r}$ is always zero, i.e. for any $X\in \mathcal{G}\vert_{r},\:r(X)=0$. Therefore, the weight measure can be positive only for a collection of uncountably many sets. Consequently, we need the notion of a weight density function similar to a probability density function (for a continuous random variable).

\begin{definition}
	\label{DefWeightDensityFunction} \ Suppose $\left.\mathcal{G}\right\vert_{r} \,=\,\{I_{t}\colon t\in\left[t_{a},t_{b}\right] \subset \Bbb{R}\}$ where $I_{t}=[c,\alpha(t)]$ or $[\alpha(t),\,c]$, $\alpha$ is a non-decreasing  right continuous function in $\left[a,b\right]^{\Bbb{R}}$ $\left(\left[a,b\right] \subset \Bbb{R}\right)$, and $f$ is non-negative on $[t_a,t_b]$ and zero elsewhere. Then $f$ is a \textbf{weight density function} of $\left.\mathcal{G}\right\vert_{r}$ if its weight measure on every interval $[t_{1}, t_{2}]\subset\Bbb{R}$ is given by the following Lebesgue--Stieltjes integral,
	\begin{equation}
		r\left(\{I_{t}\colon t\in\left[t_{1},t_{2}\right] \}\right) \,=\int_{t_{1}}^{t_{2}}f(t)\, d\alpha(t) \label{WeightDensityFunction}
	\end{equation}
	In particular, $r\left(\{I_{t}\colon t\in\left[t_{a},t_{b}\right]\}\right) \,=\,1$.
\end{definition}

Similar to the discrete case, we only consider the following three continuous bundles.

\begin{theorem}
	\label{74} \ Suppose $\left.\mathcal{G}\right\vert_{r}\,=\,\{I_{t}\colon t\in\left[t_{a},t_{b}\right]\subset\Bbb{R}\}$, $f$ and $f'$ are weight density functions of $\left.\mathcal{G}\right\vert_{r}$, $\alpha$ is given in definition \ref{DefWeightDensityFunction}, $\beta$ is a non-increasing left continuous function in $\left[a,b\right]^{\Bbb{R}}$ $\left(\left[a,b\right] \subset \Bbb{R}\right)$, and $x\in\Bbb{R}$.
\end{theorem}

\begin{enumerate}
	\item \textit{If $I_{t}\,=\,[c,\alpha(t)]$ and $\,c\leqslant a\leqslant\alpha(t)\leqslant b$, then}
	\begin{align*}
		T(x\sbin\mathcal{G}) & \:=\,\int_{t_x}^{t_{b}}f(t)\,d\alpha(t),\text{\ \ \ \ \ \ \ }\, a\leqslant\alpha(t_x-0)\leqslant x\leqslant\alpha(t_x+0)\leqslant b 
		\\ 
		& \:=\:1,\qquad\qquad\qquad\qquad c\leqslant x\leqslant a
		\\
		& \:=\:0,\qquad\qquad\qquad\qquad x<c\,\vee\,x>b
	\end{align*}
	
	\item \textit{If $I_{t}\,=\,[\alpha(t),c]$ and $\,a\leqslant\alpha(t) \leqslant b\leqslant c$, then}
	\begin{align*}
		T(x\sbin\mathcal{G}) & \:=\,\int_{t_{a}}^{t_x}f(t)\,d\alpha(t),\text{\ \ \ \ \ \ \ }\,a\leqslant\alpha(t_x-0) \leqslant x\leqslant\alpha(t_x+0)\leqslant b
		\\
		& \:=\:1,\qquad\qquad\qquad\qquad b\leqslant x\leqslant c
		\\
		& \:=\:0,\qquad\qquad\qquad\qquad x<a\,\vee\,x>c
	\end{align*}
	
	\item \textit{If $I_{t}\,=\left[\alpha(t),\,\beta(t)\right]$ and $\,a\leqslant\alpha(t)\leqslant b\leqslant c\leqslant\beta(t)\leqslant d$, then}
	\begin{align*}
		T(x\sbin\mathcal{G}) & \:=\,\int_{t_{a}}^{t_x}f(t)\,d\alpha(t),\text{\ \ \ \ \ \ \ \ }\,a\leqslant\alpha(t_x-0) \leqslant x\leqslant\alpha(t_x+0)\leqslant b
		\\
		& \:=\,\int_{t_x}^{t_{a}}f'(t)\,d\beta(t),\text{ \ \ \ \ \ \ }\,c\leqslant\beta(t_x+0)\leqslant x\leqslant \beta(t_x-0)\leqslant d
		\\
		& \:=\:1,\qquad\qquad\qquad\qquad\ b\leqslant x\leqslant c
		\\
		& \:=\:0,\qquad\qquad\qquad\qquad\ x<a\,\vee\,x>d
	\end{align*}
\end{enumerate}

\begin{proof}
	\ (i) \ Since $\alpha(t_{a})=a$ and $\alpha(t_{b})=b$, if $a\leqslant x\leqslant b$, there is a $t_x\in \left[t_{a},t_{b}\right]$ that $\alpha(t_x-0) \leqslant x\leqslant\alpha(t_x+0)$ (see Figure \ref{Fig4}). So for any $t>t_x$, $x\in I_{t}$ since $\alpha$ is non-decreasing and right continuous. Thus $\left[x\sbin \mathcal{G}\right] \,=\,\{I_{t}\colon t_x\leqslant t\leqslant t_{b}\}$ and by (\ref{WeightDensityFunction})
	\[
	T(x\sbin\mathcal{G})\,=\int_{t_x}^{t_{b}}f(t)\,d\alpha(t)
	\]
	
	Obviously, for $c\leqslant x\leqslant a$, $\left[x\sbin\mathcal{G}\right] =\mathcal{G}$ and so $T(x\sbin\mathcal{G}) =1$. For $x<c\ $or $x>b$, $\left[x\sbin\mathcal{G}\right] =\varnothing$ and $T(x\sbin\mathcal{G}) =0$. \medskip
	
	\begin{figure}[h]
		\centering
		\includegraphics{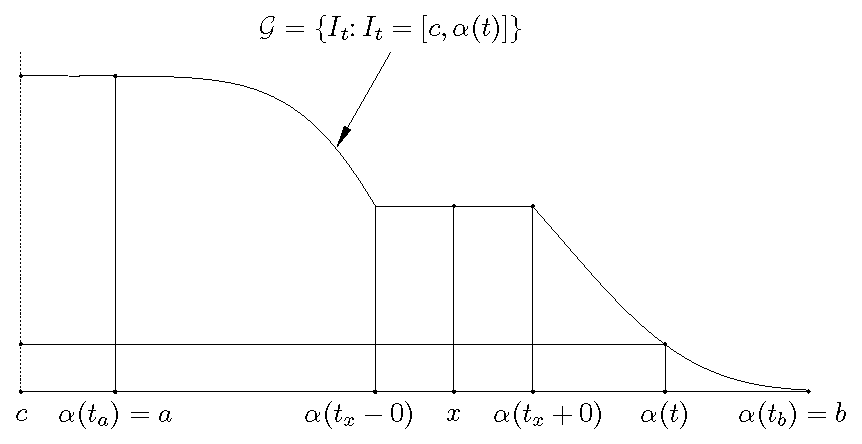}
		\caption{Diagram of the continuous belief distribution in theorem \ref{74}(i).} 
		\label{Fig4}
	\end{figure}	
	
	(ii) is a special case of (iii). \bigskip
	
	(iii) \ If $a\leqslant x\leqslant b$, let $t_x\in \left[t_{a},t_{b}\right]$ be $\alpha(t_x-0)\leqslant x\leqslant\alpha(t_x+0)$. So for any $t<t_x$, $x\in I_{t}$ since $\alpha$ is non-decreasing and right continuous (see  Figure \ref{Fig5}). Thus $\left[x\sbin\mathcal{G}\right] \,=\,\{I_{t}\colon t_{a}\leqslant t\leqslant t_x\}$ and by (\ref{WeightDensityFunction}
	\[
	T(x\sbin\mathcal{G})\,=\int_{t_{a}}^{t_x}f(t)\,d\alpha(t)
	\]
	
	If $c\leqslant x\leqslant d$, there is a $t_x\in \left[t_{a},t_{b}\right]$ that $\beta(t_x+0)\leqslant x\leqslant \beta(t_x-0)$ for $\beta(t_{b})=c$ and $\beta(t_{a})=d$.\footnote{Note that in Figure \ref{Fig5}, $\beta(t)$ is continuous at $t_x$.} So for any $t<t_x$, $x\in I_{t}$ since $\beta$ is non-increasing and left continuous. Thus $\left[x\sbin \mathcal{G} \right] \,=\,\{I_{t}\colon t_{a}\leqslant t\leqslant t_x\}$. Clearly, $-\beta$ is non-decreasing and right continuous. So by (\ref{WeightDensityFunction}) 
	\[
	T(x\sbin\mathcal{G})\,=\int_{t_{a}}^{t_x}f'(t)\,d(-\beta(t))\,=\int_{t_x}^{t_{a}}f'(t)\,d\beta(t)
	\]
	Obviously, for $b\leqslant x\leqslant c$, $\left[x\sbin\mathcal{G}\right] =\mathcal{G}$ and $T(x\sbin\mathcal{G}) =1$. For $x<a$ or $x>d$, $\left[x\sbin\mathcal{G}\right] =\varnothing$ and $T(x\sbin\mathcal{G}) =0$.\bigskip	
\end{proof}

\begin{figure}[h]
	\centering
	\includegraphics{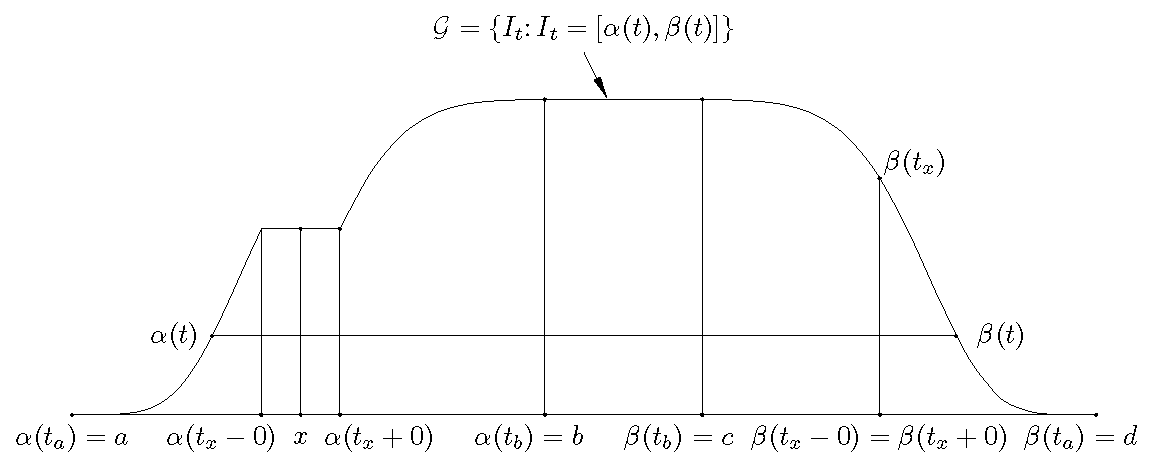}
	\caption{Diagram of the continuous belief distribution in theorem \ref{74}(iii).} 
	\label{Fig5}
\end{figure}

The belief distributions $T(x\sbin\mathcal{G})$ $(\alpha(t)\geqslant0\, \wedge\, \alpha(0)=0)$ are shown in Figure \ref{Fig6}.

\begin{remark}
	\ A belief distribution (discrete or continuous) is similar to a cumulative	probability distribution despite a notable difference between the two --- a cumulative probability distribution is always non-decreasing, whereas a belief distribution can be either increasing or decreasing (see Figure \ref{Fig6}).
\end{remark}

\begin{figure}[h]
	\centering
	\includegraphics{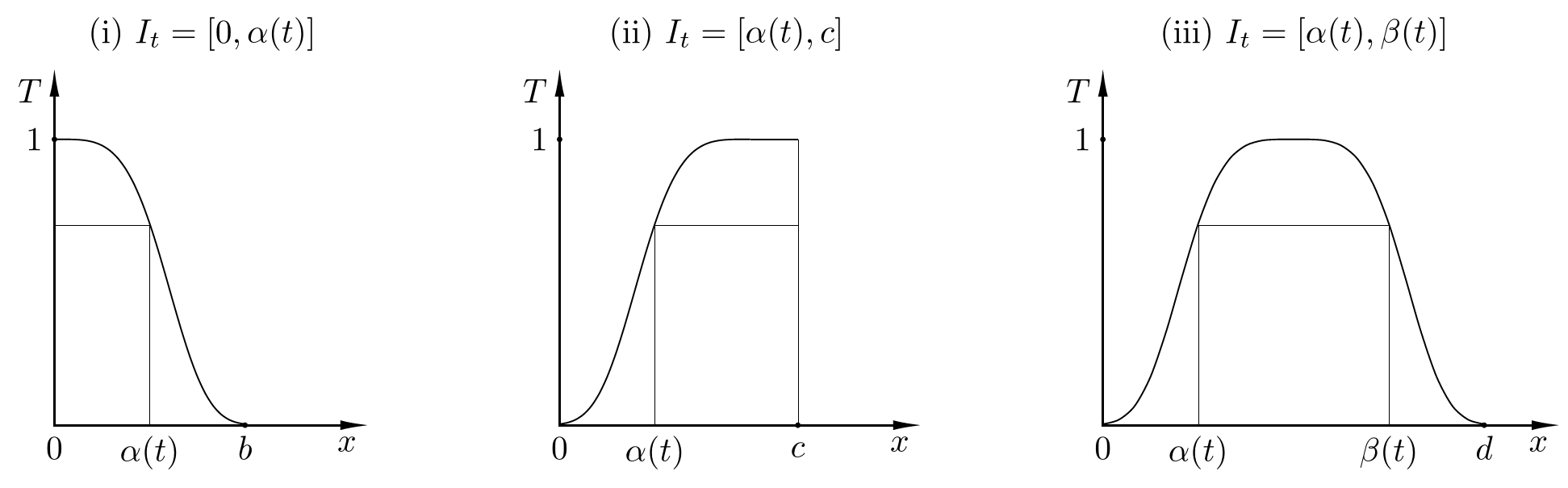}
	\caption{Diagrams of continuous belief distributions in theorem \ref{74}.} 
	\label{Fig6}
	\centering
\end{figure}

\begin{corollary} \label{58}
	\ Suppose everything is the same as in theorem \ref{74}. If $f(t)=f'(t)=M$ on $[t_a,t_b]$, then $M=1/(\alpha(t_b)-\alpha(t_a))$ and the non-constant parts of $T(x\sbin\mathcal{G})$ in theorem \ref{74} are:
\end{corollary}

\begin{enumerate}
	\item $T(x\sbin\mathcal{G})\,=\,M(\alpha(t_b)-\alpha(t_x))$.
	
	\item $T(x\sbin\mathcal{G})\,=\,M(\alpha(t_x)-\alpha(t_a))$.
	
	\item $T(x\sbin\mathcal{G})\,=\,M(\alpha(t_x)-\alpha(t_a))$ \textit{or} $M(\beta(t_a)-\beta(t_x))$.
\end{enumerate}

\begin{proof}
	\ (i) \ By definition \ref{DefWeightDensityFunction}
	\[
	\int_{t_{a}}^{t_{b}}M d\alpha(t)\,=\,1\quad\text{ and so }\quad M\,=\,1/(\alpha(t_b)-\alpha(t_a))
	\]
	Thus by theorem \ref{74}(i)
	\[
	T(x\sbin\mathcal{G}) \,=\int_{t_x}^{t_{b}}M d\alpha(t)\,=\,M(\alpha(t_b)-\alpha(t_x))
	\]
	(ii) and (iii) are proved similarly. 
\end{proof}

\begin{corollary}\label{59}
	\ Suppose $\left.\mathcal{G}\right\vert_{r}=\{I_{t}\colon t\in[a,\infty)\}, \:I_{t}=[c,\alpha(t)]\:(a,c\in\Bbb{R}\, \wedge \,c\leqslant a)$ and $\alpha(t)=t$. If $f(t)\,=\,2nb\,(t-a)^{2n-1} e^{-b(t-a)^{2n}}$ on $t\in[a,\infty)$ and $f(t)=0$ elsewhere $(n\in \Bbb{N}\,\wedge\, b\in\Bbb{R}^+)$, then $f(t)$ is a weight density function and $\,T(x\sbin\mathcal{G})\,=\, e^{-b(x-a)^{2n}}$.
\end{corollary}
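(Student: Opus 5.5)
The plan has two parts: show that $f$ is a genuine weight density in the sense of definition \ref{DefWeightDensityFunction}, and then evaluate the belief distribution with theorem \ref{74}(i). Because $\alpha(t)=t$ is continuous and strictly increasing, the Lebesgue--Stieltjes integral in (\ref{WeightDensityFunction}) reduces to an ordinary Riemann integral $\int f(t)\,dt$. The only difference from the definition is that the parameter interval is the unbounded $[a,\infty)$ rather than a compact $[t_a,t_b]$. I will treat this by letting $t_b\to\infty$ and reading the integrals as improper ones.

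\textbf{Step 1 (density).} Clearly $f(t)\geqslant 0$ on $[a,\infty)$, since $b>0$, $2n-1$ is odd and $t-a\geqslant0$; also $f=0$ elsewhere by hypothesis. The key computation is the antiderivative. With $u=(t-a)^{2n}$ we have $du=2n(t-a)^{2n-1}dt$, so
\[
\frac{d}{dt}\left(-e^{-b(t-a)^{2n}}\right)\,=\,2nb\,(t-a)^{2n-1}e^{-b(t-a)^{2n}}\,=\,f(t).
\]
This gives $\int_{t_1}^{t_2}f(t)\,dt=e^{-b(t_1-a)^{2n}}-e^{-b(t_2-a)^{2n}}$. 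Letting $t_1=a$ and $t_2\to\infty$, and using $e^{-b(t-a)^{2n}}\to 0$ because $b>0$, the total weight is $1-0=1$. So (\ref{WeightDensityFunction}) defines a weight measure on the intervals satisfying axiom \ref{1}.

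\textbf{Step 2 (belief).} Here $I_t=[c,t]$ with $c\leqslant a\leqslant\alpha(t)$, which is the setting of theorem \ref{74}(i) with $t_b=\infty$ and $b$ effectively $\infty$; note that the symbol $b$ in the theorem should not be confused with the constant $b$ in $f$. For $x\geqslant a$, continuity of $\alpha$ gives $t_x=x$. By lemma \ref{29} the central bundle is $[x\sbin\mathcal{G}]=\{I_t\colon t\geqslant x\}$, so
\[
T(x\sbin\mathcal{G})=\int_x^\infty f(t)\,dt=e^{-b(x-a)^{2n}}-\lim_{t\to\infty}e^{-b(t-a)^{2n}}=e^{-b(x-a)^{2n}}.
\]
For $c\leqslant x\leqslant a$ we get $T=1$, which agrees with the formula at $x=a$.

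\textbf{Main obstacle.} The only delicate point is justifying the use of theorem \ref{74} and definition \ref{DefWeightDensityFunction} on an unbounded parameter interval. I would handle this with the monotone limit $t_b\to\infty$: countable additivity in axiom \ref{1} applied to the disjoint pieces $\{I_t\colon t\in[k,k+1)\}$ gives the improper integral. The other verifications are routine.
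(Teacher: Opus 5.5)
Your proposal is correct and follows the paper's proof essentially step for step. You use the same antiderivative $-e^{-b(t-a)^{2n}}$ to get total weight $1$, then theorem \ref{74}(i) with $t_x=x$ to get $T(x\sbin\mathcal{G})=\int_x^\infty f(t)\,dt=e^{-b(x-a)^{2n}}$ for $x\geqslant a$ and $T=1$ on $[c,a]$, with the formula itself asserted only for $x\geqslant a$. Two small differences: your countable-additivity treatment of the unbounded parameter interval is extra rigor the paper omits, and the paper also records $T=0$ for $x<c$, which you skip but which lies outside $\bigcup\mathcal{G}$ anyway.
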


\begin{proof} \ By definition \ref{DefWeightDensityFunction} 
	\[
	\int_{a}^{\infty} f(t)\,dt\,=\left.\left(-e^{-b(t-a)^{2n}}\right)\right\vert_{a}^{\infty}\,=\,1
	\]
	So $f(t)$ is a weight density function. For any $x\in[a,\infty)$, let $x=t_x$. So by theorem \ref{74}(i)
	\[
	T(x\sbin\mathcal{G}) \,=\int_{x}^{\infty} f(t)\,dt\,=\left.\left(-e^{-b(t-a)^{2n}}\right)\right\vert_{x}^{\infty} \,=\,e^{-b(x-a)^{2n}}
	\]	
	By definition \ref{DefWeightDensityFunction}, $f(t)=0$ on $[c,a]$. So for any $x\in[c,a]$,
	\[
	T(x\sbin\mathcal{G}) \,=\int_{x}^{\infty} f(t)\,dt\,=\int_{a}^{\infty} f(t)\,dt\,=\,1
	\] 
	For $x<c,\:T(x\sbin\mathcal{G})=0\,$ for $\left[x\sbin\mathcal{G}\right]=\varnothing$.\smallskip
\end{proof}

\begin{remark}
	\ $T(x\sbin\mathcal{G}) \,=\,e^{-b(x-a)^{2}}$ is known as the \textbf{normal belief distribution}.
\end{remark}

The following is a variant of theorem \ref{74} for a non-singleton set.

\begin{theorem}
	\label{75} \ Suppose everything is the same as in theorem \ref{74}, $\widetilde{\mathcal{S}}(\phi(A)) = \left(A,\left.\mathcal{G}\right\vert_{r},\subset\right)$, and $A$ is a non-singleton subset of $\Bbb{R}$.
\end{theorem}

\begin{enumerate}
	\item \textit{If $I_{t}\,=\,[c,\alpha(t)]$ and $\,c\leqslant a\leqslant\alpha(t)\leqslant b$, then}
	\begin{align*}
		T(\phi) & \:=\,\int_{t_{A}}^{t_{b}}f(t)\,d\alpha(t),\text{\ \ \ \ }\, a\leqslant\alpha(t_A-0)\leqslant \max\{A\}\leqslant\alpha(t_A+0)\leqslant b \,\wedge\, c\leqslant \min\{A\}
		\\ 
		& \:=\:1,\qquad\qquad\qquad\quad c\leqslant\min\{A\}<\max\{A\}\leqslant a
		\\
		& \:=\:0,\qquad\qquad\qquad\quad \min\{A\}<c\,\vee\,\max\{A\}>b
	\end{align*}
	
	\item \textit{If $I_{t}\,=\,[\alpha(t),c]$ and $\,a\leqslant\alpha(t)\leqslant b\leqslant c$, then}
	\begin{align*}
		T(\phi) & \:=\,\int_{t_{a}}^{t_A}f(t)\,d\alpha(t),\text{\ \ \ \ }a\leqslant\alpha(t_A-0)\leqslant \min\{A\}\leqslant\alpha(t_A+0)\leqslant b\,\wedge\,\max\{A\}\leqslant c
		\\
		& \:=\:1,\qquad\qquad\qquad\quad b\leqslant \min\{A\}<\max\{A\}\leqslant c
		\\
		& \:=\:0,\qquad\qquad\qquad\quad \min\{A\}<a\,\vee\,\max\{A\}>c
	\end{align*}
	
	\item \textit{If $I_{t}\,=\left[\alpha(t),\,\beta(t)\right]$ and $\,a\leqslant\alpha(t)\leqslant b\leqslant c\leqslant\beta(t)\leqslant d$, let $a\leqslant\alpha(t_A-0)\leqslant\min\{A\}\leqslant\alpha(t_A+0)\leqslant b$, $  c\leqslant\beta(t_A'+0)\leqslant\max\{A\}\leqslant\beta(t_A'-0)\leqslant d$, and $\,t_0=\min\{t_{A},t_{A'}\}$. Then}
	\begin{align*}
		T(\phi) & \:=\,\int_{t_{a}}^{t_0}f(t)\,d\alpha(t),\text{\ \ \ \ \ \ \ \ }\,\alpha(t_A-0)\leqslant \min\{A\}< \max\{A\}\leqslant\beta(t_A'-0)
		\\
		& \:=\:1,\qquad\qquad\qquad\qquad\ b\leqslant \min\{A\}<\max\{A\}\leqslant c
		\\
		& \:=\:0,\qquad\qquad\qquad\qquad\ \min\{A\}<a\,\vee\,\max\{A\}>d
	\end{align*}
\end{enumerate}

\begin{proof}
	\ (i) \ Let $t_A\in \left[t_{a},t_{b}\right]$ that $\alpha(t_A-0) \leqslant \max\{A\}\leqslant\alpha(t_A+0)$. Then for any $t\geqslant t_A,\, A\subset I_{t}$ for $\alpha(t)\geqslant\alpha(t_A)$ and $c\leqslant\min\{A\}$. So $[\phi] \,=\, \{I_{t}\colon t_A\leqslant t\leqslant t_{b}\}$ and by (\ref{WeightDensityFunction})
	\[
	T(\phi)\,=\int_{t_A}^{t_{b}}f(t)\,d\alpha(t)
	\]
	
	Obviously, if $A\subset[c,a]$, $[\phi] =\mathcal{G}$ and so $T(\phi) =1$. If $\min\{A\}<a$ or $\max\{A\}>d$, $[\phi] =\varnothing$ and $T(\phi) =0$. \medskip
	
	(ii) \ Let $t_A\in \left[t_{a},t_{b}\right]$ that $\alpha(t_A-0) \leqslant \min\{A\}\leqslant\alpha(t_A+0)$. Then for any $t\leqslant t_A,\, A\subset I_{t}$ since $\alpha(t)\leqslant\alpha(t_A)$ and $\max\{A\}\leqslant c$. So $[\phi] \,=\, \{I_{t}\colon t_a\leqslant t\leqslant t_{A}\}$ and it follows by (\ref{WeightDensityFunction}). The other two cases are obvious. \medskip
	
	(iii) \ Clearly, $\alpha(t_0)\leqslant \alpha(t_A)$ and $\beta(t_0)\geqslant\beta(t_{A'})$ for $t_0\leqslant t_{A}$ and $t_0\leqslant t_{A'}$. So for any $t\leqslant t_0,\,A\subset I_{t}$. Thus $[\phi] \,=\, \{I_{t}\colon t_{a}\leqslant t\leqslant t_0\}$ and it follows by (\ref{WeightDensityFunction}). The other two cases are obvious. \smallskip	
\end{proof}

\begin{problem}
	\label{76} \ Suppose $\phi_1(x)\rightleftharpoons $\textquotedblleft $\,x$ is young\textquotedblright\ and $\,\widetilde{\mathcal{S}}(\phi_1(x)) = \left(\{x\},\left.\mathcal{G}_{1}\right\vert_{r_1},\subset \right)$ ($x$ is an age for the human) with 
	\[
	\left.\mathcal{G}_1\right\vert_{r_1}=\,\{I_t\colon I_t=[0,\alpha(t)]\,\wedge \,\alpha(t)=20t+20\,\wedge \,0\leqslant t\leqslant 1\,\wedge \,f_1(t)=0.05\,\}
	\]
	and $\phi_2(x)\rightleftharpoons$\textquotedblleft $\,x$ is middle-aged\textquotedblright\ and $\,\widetilde{\mathcal{S}}(\phi_{2}(x)) = \left(\{x\},\left.\mathcal{G}_2\right\vert_{r_2},\subset \right) $ with 
	\[
	\left.\mathcal{G}_2\right\vert_{r_2}=\,\{J_{t}\colon J_{t}=[\alpha(t)\text{, }\beta(t)]\,\wedge \,\alpha(t) = 20t+20\,\wedge \,\beta(t) =70-20t\,\wedge \,0\leqslant t\leqslant 1\,\wedge \,f_2(t)=0.05\,\}
	\]
	and $\phi_3(x)\rightleftharpoons$\textquotedblleft $\,x$ is old\textquotedblright\ and $\,\widetilde{\mathcal{S}}(\phi_3(x)) = \left(\{x\},\left.\mathcal{G}_3\right\vert_{r_3},\subset \right)$ with
	\[
	\left.\mathcal{G}_3\right\vert_{r_3}=\,\{K_{t}\colon K_{t}=[\gamma(t)\text{,}\,150]\,\wedge \,\gamma(t) = 20t+50\,\wedge \,0\leqslant t\leqslant 1\,\wedge \,f_3(t)=0.05\,\}\footnote{We assume that human age is no more than $150$ years old.}
	\]
	where $f_i(t)$ $\left(1\leqslant i\leqslant 3\right)$ are weight density functions.
\end{problem}

\begin{enumerate}
	\item \textit{Find} $T(\phi_{i}(x))$ \textit{and prove} $\sum\limits_{1\leqslant i\leqslant3} T(\phi_i(x))=1$ \textit{for any} $x\in [0,150]$.
	
	\item \textit{Calculate} $T(\phi_i(x))$ \textit{for $x\,=\,10,\,20,\,32,\,57,\,75,\,90\:(\text{years old})$}. 
\end{enumerate}

\begin{proof}
	[Solution]\ (i) \ For $0\leqslant x\leqslant 20$, let $x=\alpha(t_x)$. Then $t_x\leqslant0$. So by theorem \ref{74}(i)
	\[
	T(\phi_{1}(x))\,=\int_{t_x}^{1}f_1(t)\,d\alpha(t)\,=\int_0^{1}f_1(t)\,d\alpha(t)\,=\,1
	\]
	\ Obviously, $T(\phi_{2}(x))\,=\,T(\phi_{3}(x))\,=\,0$.\medskip
	
	For $20\leqslant x\leqslant40,\:T(\phi_{3}(x))=0$. Suppose $x=\alpha(t_x)$. By corollary \ref{58}(i) and (ii)
	\[
	T(\phi_{1}(x))\,=\,0.05\,(\alpha(1)-\alpha(t_x))\,=\,1-t_x\quad\text{and}\quad T(\phi_{2}(x))\,=\,0.05\, (\alpha(t_x)-\alpha(0))\,=\,t_x
	\]
	
	For $40\leqslant x\leqslant 50,\:T(\phi_{1}(x))=T(\phi_{3}(x))=0$, and $T(\phi_{2}(x))=1$. \medskip
	
	For $50\leqslant x\leqslant 70,\:T(\phi_{1}(x))=0$. Suppose $x=\gamma(t_x)=\beta(t_x')$. Then $t_x'=1-t_x$. So by corollary \ref{58}(ii) and (iii)
	\[
	T(\phi_{2}(x))\,=\,0.05\,(\beta(0)-\beta(t_x'))\,=\,t_x'\,=\,1-t_x\quad\text{and}\quad T(\phi_{3}(x))\,=\,0.05\,(\gamma(t_x)-\gamma(0)) \,=\,t_x
	\]
	
	For $x\geqslant 70,\,T(\phi_{1}(x))=T(\phi_{2}(x))=0$, and $T(\phi_{3}(x))=1$. \bigskip
	
	From the above results, we reach the conclusion that for any $x\in [0,150]$, $\sum\limits_{1\leqslant i\leqslant 3} T(\phi_i(x))=1$. (See Figure \ref{Fig7} for more detail.)\bigskip
	
	(ii) \ From (i), we have \medskip
	
	$T(\phi_{1}(10))\,=\,T(\phi_{1}(20))=1$, \ and $\,T(\phi_{1}(57))\,=\,T(\phi_{1}(75))\,=\,T(\phi_{1}(90))=0$. From $\alpha(t_x)=32,\:t_x=0.6$. So $T(\phi_{1}(32))=1-t_x=0.4$.\medskip
	
	$T(\phi_{2}(10))\,=\,T(\phi_{2}(20))\,=\,T(\phi_{2}(75))\,=\,T(\phi_{2}(90))=0$. Also $T(\phi_{2}(32))=t_x=0.6$. From $\beta(t_x')=57,\:t_x'=0.65$. So $T(\phi_{2}(57))=0.65$. \medskip
	
	$T(\phi_{3}(10))\,=\,T(\phi_{3}(20))\,=\,T(\phi_{3}(32))=0$, \ and $\,T(\phi_{3}(75))\,=\,T(\phi_{3}(90))=1$. From $\gamma(t_x)=57,\:t_x=0.35$. So $T(\phi_{3}(57))=0.35$.\medskip
\end{proof}

\begin{figure}[h]
	\centering
	\includegraphics{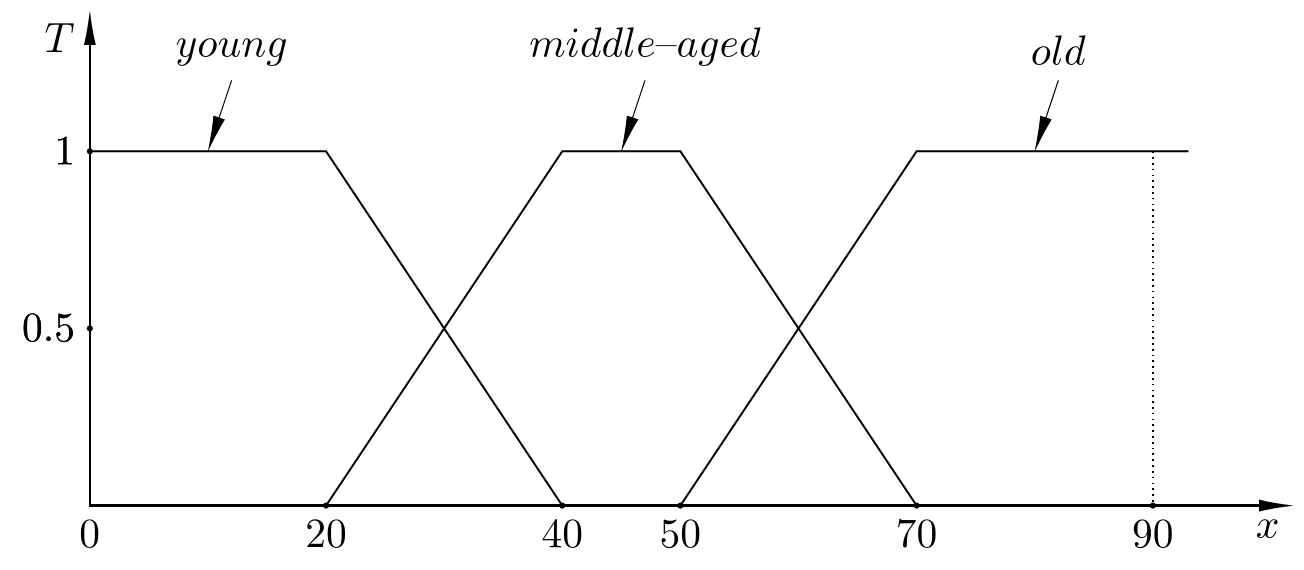}
	\caption{Diagram of belief distributions of three adjectives for \textit{age}.}
	\label{Fig7}
	\centering
\end{figure}

\begin{problem}
	\label{8} \ Suppose $\varphi_{1}(y)\rightleftharpoons\, $\textquotedblleft$\,y$ is short\textquotedblright \ ($y$ is a height for the human) and $\widetilde{\mathcal{S}}(\varphi_{1}(y))=\left(\{y\},\left.\mathcal{C}_{1}\right\vert_{r_{1}},\subset\right)$ with
	\[
	\left.\mathcal{C}_{1}\right\vert_{r_{1}}\,=\,\{I_{t}\colon I_{t}\,=\, [0,\alpha(t)] \,\wedge\, \alpha(t)=t/10+1.6 \,\wedge\,0 \leqslant t\leqslant1\,\wedge\,f_{1}(t)=10\,\}
	\]
	and $\varphi_{2}(y)\rightleftharpoons$\textquotedblleft$\,y$ is \textit{medium}\textquotedblright\ and $\widetilde{\mathcal{S}}(\varphi_{2}(y)) =\left(\{y\},\left.\mathcal{C}_{2}\right\vert_{r_{2}},\subset\right)$ with
	\[
	\left.\mathcal{C}_{2}\right\vert_{r_{2}}\,=\,\{J_{t}\colon J_{t}=[\alpha(t),\,\beta(t)] \,\wedge\, \alpha(t)=t/10+1.6 \,\wedge\,\beta(t)=1.85-t/10\,\wedge\,0\leqslant t\leqslant1\,\wedge\,f_{2}(t)=10\,\}
	\]
	and $\varphi_{3}(y)\rightleftharpoons$\textquotedblleft$\,y$ is tall\textquotedblright\ and $\widetilde{\mathcal{S}}(\varphi_{3}(y))=\left(\{y\}, \left. \mathcal{C}_{3}\right\vert_{r_{3}},\subset\right)$ with
	\[
	\left.\mathcal{C}_{3}\right\vert_{r_{3}}\,=\,\{K_{t}\colon K_{t} \,=\, [\gamma(t)\text{,}\,3] \,\wedge\, \gamma(t)=t/10+1.75 \,\wedge\,0\leqslant t\leqslant1\,\wedge\,f_{3}(t)=10\,\}
	\]
	where $f_{i}(t) \left(1\leqslant i\leqslant3\right)$ are weight density functions.\footnote{Here the belief distributions of \textit{height} are for men only.} 
\end{problem}

\begin{enumerate}
	\item \textit{Find} $T(\varphi_{i}(y))$ \textit{and prove that} $\sum\limits_{1\leqslant i\leqslant3} T(\varphi_i(y))=1$ \textit{for any} $y\in [0,3]$.
	
	\item \textit{Calculate} $T(\varphi_i(y))$ \textit{for} $y\,=\,1.6,\,1.71,\,1.82,\,1.9\left(m\right)$.
	
	\item \textit{Calculate} $T(\varphi_i(A))$ \textit{for} $A\,=\,\{1.71,1.77, 1.82\}$. 
\end{enumerate}

\begin{proof}
	[Solution]\ (i) \ It is similar to the solution of problem \ref{76}. \medskip
	
	For $0\leqslant y\leqslant 1.6,\: T(\varphi_{2}(y))=T(\varphi_{3}(y))=0$, and $T(\varphi_{1}(y))=1$. \medskip
	
	For $1.6\leqslant y\leqslant1.7$,\, $T(\varphi_{3}(y))=0$. Suppose $y=\alpha(t_y)$. By corollary \ref{58}, $T(\varphi_{1}(y))=1-t_y$ and $T(\varphi_{2}(y))=t_y$. \medskip
	
	For $1.7\leqslant y\leqslant 1.75,\,T(\varphi_{1}(y))=T(\varphi_{3}(y))=0$, and $T(\varphi_{2}(y))=1$. \medskip
	
	For $1.75\leqslant y\leqslant 1.85$, $T(\varphi_{1}(y))=0$. Suppose $y=\gamma(t_y)=\beta(t_y')$. Then $t_y'=1-t_y$. By corollary \ref{58}, $T(\varphi_{2}(y))=t_y'=1-t_y$ and $T(\varphi_{3}(y))=t_y$. \medskip
	
	For $y\geqslant 1.85,\,T(\varphi_{1}(y))=T(\varphi_{2}(y))=0$, and $T(\varphi_{3}(y))=1$. \medskip
	
	From above results, we have for any $y\in [0,3]$, $\sum\limits_{1\leqslant i\leqslant 3} T(\varphi_i(y))=1$. (See Figure \ref{Fig8} for more detail.) \smallskip
	
	(ii) \ From (i), we have \medskip
	
	$T(\varphi_{1}(1.6))=1$, \ and $\,T(\varphi_{1}(1.71))\,=\,T(\varphi_{1}(1.82))\,=\,T(\varphi_{1}(1.9))\,=\,0$. \medskip
	
	$T(\varphi_{2}(1.6))=T(\varphi_{2}(1.9))=0$, \ and $\,T(\varphi_{2}(1.71))=1$. From $1.85-t_y'/10=1.82, \,t_y'=0.3$, and so $T(\varphi_{2}(1.82))=0.3$. \medskip
	
	$T(\varphi_{3}(1.6))=T(\varphi_{3}(1.71))=0$, \ and $\,T(\varphi_{3}(1.9))=1$. From $t_y/10+1.75=1.82,\,t_y=0.7$, and so $T(\varphi_{3}(1.82))=0.7$. \medskip
	
	(iii) \ Obviously, $\min\{A\}=1.71$ and $\max\{A\}=1.82$. So by theorem \ref{75} and (ii), $T(\varphi_{1}(A))= T(\varphi_{3}(A))=0$. Since $t_0=\min\{1,0.3\}=0.3$, $\,T(\varphi_{2}(A))=0.3$. \smallskip
\end{proof}

\begin{figure}[h]
	\centering
	\includegraphics{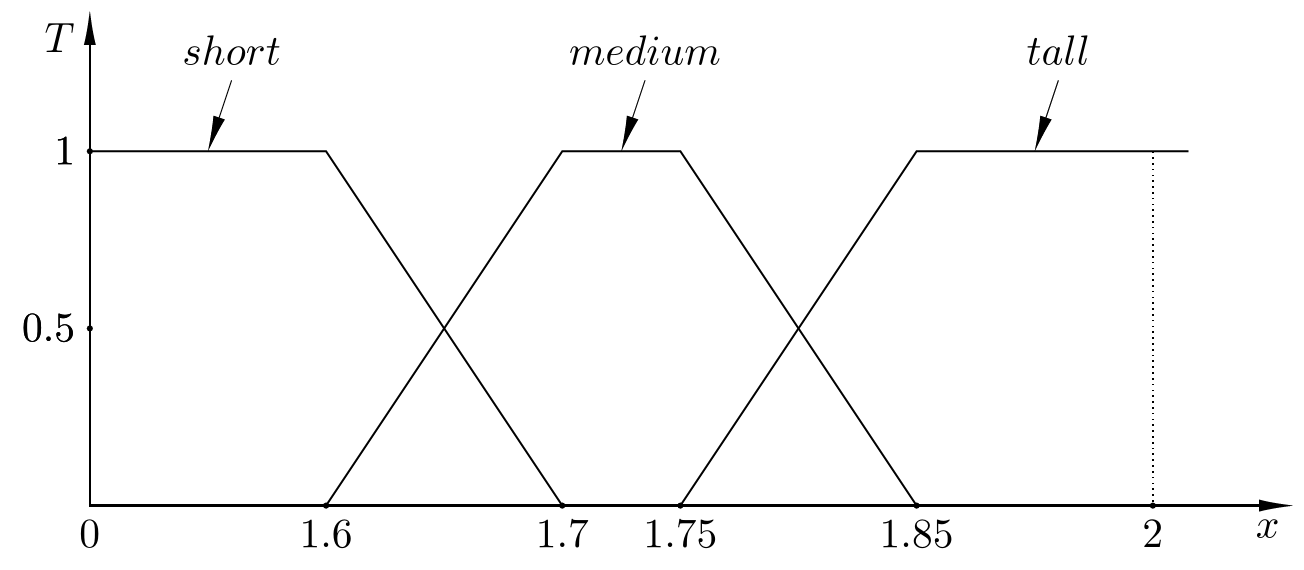}
	\caption{Diagram of belief distributions of three adjectives for \textit{height}.}
	\label{Fig8}
	\centering
\end{figure}

\subsection{Joint Bundles and Belief Distributions}\label{JointBundle}

Joint bundles are important because the conjunction, disjunction and implication of predicates in the multi-valued logic $\mathfrak{M}$ are based upon them. There are three types of joint bundles and belief distributions, discrete--discrete, continuous--continuous, and continuous--discrete. 

\begin{definition}
	\label{DefJointDiscreteDistribution} \ Suppose $\left.\mathcal{G}_{1}\right\vert_{r_{1}} =\,\{X\colon r_{1}(X)\}$ and $\left.\mathcal{G}_{2}\right\vert_{r_{2}} =\,\{Y\colon r_{2}(Y)\}$ are discrete set bundles, $\widetilde{\mathcal{S}}(\phi_{1}) =\left(\{x\}, \left.\mathcal{G}_{1}\right\vert_{r_{1}},\bigodot \right)$ and $\,\widetilde{\mathcal{S}}(\phi_{2}) =\left(\{y\}, \left.\mathcal{G}_{2}\right\vert_{r_{2}},\bigodot\right)$, $T(x,y)\,=\,T(x\sbin \mathcal{G}_{1}\wedge y\sbin \mathcal{G}_{2}).$\footnote{In the rest discussion of this section, we assume $\,\widetilde{\mathcal{S}}(\phi_{1}) =(\{x\}, \left.\mathcal{G}_{1}\right\vert_{r_{1}},\bigodot), \,\widetilde{\mathcal{S}}(\phi_{2}) =(\{y\}, \left.\mathcal{G}_{2}\right\vert_{r_{2}},\bigodot),\,T(x,y)=T(x\sbin \mathcal{G}_{1}\wedge y\sbin \mathcal{G}_{2})$.} Then the \textbf{joint discrete distribution} of $\phi_{1}$ and $\phi_{2}$ is defined as:
	\begin{equation}
		T(\phi_{1}\barwedge\phi_{2})\,=\,T(x,y)\:=\sum_{X\in\left[\phi_{1}\right]}\sum_{Y\in\left[\phi_{2}\right]} r(\left(X,Y\right)) \label{JointDiscreteDistribution}
	\end{equation}
	Where $r$ is a weight measure on $\sigma(\mathcal{G}_{1}\times\mathcal{G}_{2})$ and  $\bigodot$ denotes $\subset$ or $\not \subset$.
\end{definition}

\begin{definition}
	\label{DefJointContinuousDistribution} \ Suppose $\left.\mathcal{G}_{1}\right\vert_{r_{1}} = \{I_{t}\colon t\in\left[t_{a},t_{b}\right]\subset\Bbb{R}\}$ and $\left.\mathcal{G}_{2} \right\vert_{r_{2}} \,=\, \{I_{s}\colon s\in\left[s_{c},s_{d}\right] \subset \Bbb{R}\}$ where $I_{t}=[A,\alpha(t)]$ or $[\alpha(t),A]$ or $[\alpha(t),\,\gamma(t)]$ and  $I_{s}=[B,\beta(s)]$ or $[\beta(s),B]$ or $[\beta(s),\,\gamma(s)]$, $\alpha$ and $\beta$ are non-decreasing right continuous functions and $\gamma$ a non-increasing left continuous function in $\left[a,b\right]^{\Bbb{R}}$ $\left(\left[a,b\right] \subset \Bbb{R} \wedge A,B\in\Bbb{R}\right)$, $f(t,s)$ is a joint weight density function on $\left.\left(\mathcal{G}_{1}\times\mathcal{G}_{2}\right)\right\vert_{r}$ that is non-negative on $[t_a,t_b]\times [s_{c},s_{d}]$ and zero elsewhere. Then the \textbf{joint continuous distribution} of $\phi_{1}$ and $\phi_{2}$ is defined as:
	\begin{equation}
		T(\phi_{1}\barwedge\phi_{2})\,=\,T(x,y)\,=\,r\left(\{(I_{t},I_{s})\colon(t,s) \in\left[t_{1},t_{2}\right]\times\left[s_{1},s_{2} \right]\}\right) \,= \int_{t_{1}}^{t_{2}}\int_{s_{1}}^{s_{2}}f(t,s)\,d\beta(s)\,d\alpha(t) \label{JointContinuousDistribution}
	\end{equation}
	Where $\left[t_{1},t_{2}\right] \times\left[s_{1},s_{2}\right]\subset\Bbb{R}^2$. In particular, $r(\{(I_{t},I_{s})\colon (t,s) \in\left[t_{a},t_{b}\right] \times \left[s_{c},s_{d}\right]\}) \,=\,1$.
\end{definition}

\begin{definition}
	\label{DefJointMixDistribution} \ Suppose $\left.\mathcal{G}_{1}\right\vert_{r_{1}}$ is given in definition \ref{DefJointContinuousDistribution} and $\left.\mathcal{G}_{2}\right\vert_{r_{2}}$ in definition \ref{DefJointDiscreteDistribution}, $f(t,Y)$ is a weight density function of $\mathcal{G}_{1}$ for each $Y\in\mathcal{G}_{2}$. Then the \textbf{joint continuous--discrete (mixed) distribution} of $\phi_{1}$ and $\phi_{2}$ is defined as:
	\begin{equation}
		T(\phi_{1}\barwedge\phi_{2})\,=\,T(x,y)\:=\underset{Y\in\left[\phi_{2}\right]}{\sum}\int_{t_{1}}^{t_{2}}f(t,Y)\, d\alpha(t) \label{JointMixDistribution}
	\end{equation}
\end{definition}

\begin{remark}
	In the above three definitions, $T(\phi_{1}\barwedge\phi_{2})$ can also be defined upon theorem \ref{73} and \ref{75} for non-singleton subject sets, i.e. $\widetilde{\mathcal{S}}(\phi_{1}) =\left(A_1, \left.\mathcal{G}_{1}\right \vert_{r_{1}},\bigodot \right)$ and $\,\widetilde{\mathcal{S}}(\phi_{2}) =\left(A_2, \left.\mathcal{G}_{2}\right \vert_{r_{2}},\bigodot\right)$ where $A_1$ and $A_2$ are non-singleton subsets of $\Bbb{R}$.
\end{remark}

A marginal weight density function that is similar to a marginal probability density function can be defined as well. 

\begin{definition}\label{DefMarginalWeightDensityFunction}
	\ Suppose everything is the same as in definition \ref{DefJointContinuousDistribution}. Then the \textbf{marginal weight density functions} for $\phi_1$ and $\phi_2$ are defined as:
	\[
	f_T(t)\,=\int_{s_{c}}^{s_{d}}f(t,s)\,d\beta(s)\quad\text{and}\quad f_S(s)\,=\int_{t_{a}}^{t_{b}}f(t,s)\,d\alpha(t)
	\]
\end{definition}

For the joint continuous distribution, we can define irrelevancy as well.

\begin{definition}
	\label{DefIrrelevantContinuousBundles} \ Suppose everything is the same as in definition \ref{DefJointContinuousDistribution} and \ref{DefMarginalWeightDensityFunction}. If $f(t,s)\,=\,f_T(t)\,f_S(s)$, then $\left.\mathcal{G}_{1} \right\vert_{r_{1}}$ and $\left.\mathcal{G}_{2}\right\vert_{r_{2}}$ are \textbf{irrelevant}. Otherwise, they are \textbf{relevant}.
\end{definition}

\begin{corollary}\label{45}
	\ If $\left.\mathcal{G}_{1}\right\vert_{r_{1}}$ and $\left.\mathcal{G}_{2}\right\vert_{r_{2}}$ are irrelevant, then $\phi_{1}$ and $\phi_{2}$ are irrelevant.
\end{corollary}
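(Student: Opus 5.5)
The plan is to mirror the proof of theorem \ref{27} in the continuous setting. By lemma \ref{18}, it suffices to show that for every $x$ and $y$
\[
T(\phi_{1}(x)\barwedge\phi_{2}(y))\,=\,T(\phi_{1}(x))\,T(\phi_{2}(y)).
\]
Both sides will be computed as Lebesgue--Stieltjes integrals over the parameter sets of the central bundles.

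First, I identify the central bundles. By theorem \ref{74} (and its proof), for each of the three shapes of $I_t$ the central bundle $[\phi_{1}(x)]$ is a parameter interval $\{I_t\colon t\in[t_1,t_2]\}$. The endpoints are $t_1=t_x$ and $t_2=t_b$ in case (i), and $t_1=t_a$ and $t_2=t_x$ in cases (ii) and (iii). The degenerate cases $T=1$ and $T=0$ correspond to the full interval and the empty set. Likewise $[\phi_{2}(y)]=\{I_s\colon s\in[s_1,s_2]\}$. By theorem \ref{13}(i), the central bundle of the conjunction is the product $[\phi_{1}(x)]\times[\phi_{2}(y)]$, which corresponds to the rectangle $[t_1,t_2]\times[s_1,s_2]$. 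Hence by (\ref{JointContinuousDistribution})
\[
T(\phi_{1}(x)\barwedge\phi_{2}(y))\,=\int_{t_{1}}^{t_{2}}\int_{s_{1}}^{s_{2}}f(t,s)\,d\beta(s)\,d\alpha(t).
\]

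Second, I use irrelevancy, $f(t,s)=f_T(t)f_S(s)$ (definition \ref{DefIrrelevantContinuousBundles}). By Fubini for product Lebesgue--Stieltjes measures, where nonnegativity of $f$ justifies Tonelli, the double integral factors as
\[
\int_{t_1}^{t_2}f_T(t)\,d\alpha(t)\cdot\int_{s_1}^{s_2}f_S(s)\,d\beta(s).
\]

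Third, I identify each factor with a marginal degree of truth. By axiom \ref{51} with $\mathcal{A}_1=[\phi_1(x)]$, the weight $r_1([\phi_1(x)])$ equals $r([\phi_1(x)]\times\mathcal{G}_2)$. That weight is the integral of $f$ over $[t_1,t_2]\times[s_c,s_d]$, which by definition \ref{DefMarginalWeightDensityFunction} is $\int_{t_1}^{t_2}f_T\,d\alpha$. Thus $f_T$ is a weight density function of $\mathcal{G}_1$ in the sense of definition \ref{DefWeightDensityFunction}, and the first factor equals $T(\phi_1(x))$; symmetrically, the second factor equals $T(\phi_2(y))$. Lemma \ref{18} then gives the claim.

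The main obstacle I expect is the bookkeeping in case (iii), where the two-sided intervals are handled via $\beta$ (or $\gamma$) with a different density $f'$ and integrator $d\beta$. I would handle it as in the proof of theorem \ref{74}(iii): rewrite using $-\beta$, which is non-decreasing and right continuous, so that the central bundle is still a parameter interval and the rectangle argument goes through unchanged.

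A second point to check is measurability of the endpoint sets, since the intervals are closed at jump points of $\alpha$. Because a single parameter value carries zero weight in a continuous bundle, open versus closed endpoints do not affect any of the integrals.
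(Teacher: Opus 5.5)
Your proposal is correct and is essentially the paper's proof: express $T(\phi_1\barwedge\phi_2)$ by (\ref{JointContinuousDistribution}), factor the double integral using $f(t,s)=f_T(t)f_S(s)$, identify the two factors with $T(\phi_1)$ and $T(\phi_2)$, and conclude by lemma \ref{18}. Your appeal to axiom \ref{51} to justify $\int_{t_1}^{t_2}f_T\,d\alpha=T(\phi_1(x))$ spells out a step the paper leaves implicit. That identification and the Fubini factorization hold for any measurable product of parameter sets, so neither the case (iii) bookkeeping nor the endpoint remark is actually needed.
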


\begin{proof}
	\ By (\ref{JointContinuousDistribution}) and definition \ref{DefIrrelevantContinuousBundles}
	\begin{spreadlines}{1.5ex}
		\begin{align*}
			T(\phi_{1}\barwedge\phi_{2})  & \,=\, \int_{t_{1}}^{t_{2}}\int_{s_{1}}^{s_{2}}f(t,s)\,d\beta(s)\,d\alpha(t)
			\\
			& \,=\, \int_{t_{1}}^{t_{2}}f_T(t)\,d\alpha(t)\int_{s_{1}}^{s_{2}}f_S(s)\,d\beta(s)
			\\
			& \,=\, T(\phi_{1})\,T(\phi_{2})
		\end{align*}	
	\end{spreadlines}
	
	So it follows by lemma \ref{18}. \smallskip
\end{proof}

\begin{problem}
	\ Suppose $\phi_3(x)\rightleftharpoons$\textquotedblleft $\,x$ is old\textquotedblright\ and $\,\widetilde{\mathcal{S}}(\phi_3(x)) = \left(\{x\},\left.\mathcal{G}_3\right\vert_{r_3},\subset \right)$ are given in problem \ref{76}, $\varphi_{3}(y)\rightleftharpoons\, $\textquotedblleft$\,y$ is tall\textquotedblright\ and $\widetilde{\mathcal{S}}(\varphi_{3}(y))=\left(\{y\},\left.\mathcal{C}_{3}\right\vert_{r_{3}},\subset\right)$ given in problem \ref{8}. Find $\,T(\phi_3(x)\barwedge\varphi_3(y))$. Verify that $\phi_3$ and $\varphi_3$ are irrelevant.
\end{problem}

\begin{proof} 
	\ Let a joint density function of $\phi_3$ and $\varphi_3$ be
	\begin{align*}
		f(t,s) & \,=\,0.5,  \qquad t\in [0,1]\,\wedge\,s\in[0,1]
		\\
		& \,=\, 0, \,\qquad\text{\ \ otherwise}
	\end{align*}
	
	\ For $x\geqslant50\,$ and $\,y\geqslant 1.75$, let $x=\gamma(t_x)\,$ and $\,y=\gamma'(s_y)$. So by (\ref{JointContinuousDistribution})
	\[
	T(\phi_3(x)\barwedge\varphi_3(y))\,=\int_{0}^{t_{x}}\int_{0}^{s_{y}}f(t,s)\,d\gamma^{\prime}(s)\,d\gamma(t)\,=\,T(\phi_3(x))\, T(\varphi_3(y))
	\]
	
	\ So by lemma \ref{18}, $\phi_3$ and $\varphi_3$ are irrelevant. \bigskip
\end{proof}

The above result is consistent with the fact that adult age and height are irrelevant because human height generally stops growing after around 20 years old.

\begin{problem}
	\ Suppose $\,\theta(z)\rightleftharpoons\,$\textquotedblleft $\,z$ is heavy\textquotedblright\ ($z$ is a weight of the human) and $\,\widetilde{\mathcal{S}}(\theta(z)) = (\{z\},\left.\mathcal{W}\right\vert_{r},\subset)$ where
	\[
	\left.\mathcal{W}\right\vert_{r}\,=\,\{I_{s}\colon I_{s}=[\alpha(s),\,1000]\,\wedge \,\alpha(s) = 40s+80\,\wedge \,0\leqslant s\leqslant 1\,\wedge \,f(s)=0.025\,\}\footnote{We assume that human weight is no more than $1000kg$.}
	\]
\end{problem}

\begin{enumerate}
	\item \textit{Find $\,T(\theta(z))$ and $\,T(\varphi_3(y)\barwedge \theta(z))$}.
	
	\item \textit{Show that $\varphi_3$ and $\theta$ are relevant.}
\end{enumerate}

\begin{proof}
	\ (i) \ Obviously, for $0\leqslant z\leqslant 80,\:T(\theta(z)=0$. For $z\geqslant 120,\:T(\theta(z))=1$. \medskip
	
	For $80\leqslant z\leqslant 120$, suppose $z=\alpha(s_z)$. Then by corollary \ref{58}(ii)
	\[
	T(\theta(z))\,=\,0.025 \left(\alpha(s_z)-\alpha(0)\right) \,=\,s_z
	\]
	
	(ii) \ Let a joint density function of $\varphi_3$ and $\theta$ be
	\begin{align*}
		f(t,s) & \,=\,0.25,  \qquad0.5t\leqslant s\leqslant 1+0.5t\,\wedge\,t\in [0,1]
		\\
		& \,=\, 0, \,\quad\qquad\text{otherwise}
	\end{align*}
	
	For $\,y\geqslant 1.75$ and $z\geqslant80$, let $y=\gamma(t_y)$ and $z=\alpha(s_z)$. So by (\ref{JointContinuousDistribution})
	\begin{spreadlines}{1.5ex}
		\begin{align*}
			T(\varphi_3(y)\barwedge\theta(z)) & \,=\int_{0}^{t_{y}}\int_{0.5t}^{s_z}f(t,s)\,d\alpha(s)\,d\gamma(t)
			\\
			& \,=\, 10 \int_{0}^{t_{y}}(s_z-0.5t)\,d\gamma(t)
			\\
			& \,=\, t_y\left(4s_z-t_y\right)/4
		\end{align*}
	\end{spreadlines}
	
	From problem \ref{8}, for $1.75\leqslant y\leqslant 1.85$, $T(\varphi_{3}(y))=t_y$. If $t_y\neq0$, then
	\[
	T(\varphi_{3}(y)\rightharpoonup\theta(z))\,=\,\dfrac{T(\varphi_{3}(y)\barwedge\theta(z))}{T(\varphi_{3}(y)}\,=\, \dfrac{4s_z-t_y}{4}\,\neq\,T(\theta(z))
	\]
	\ So by definition \ref{DefIrrelevantPredicates}, $\varphi_{3}$ and $\theta$ are relevant. \medskip
	
	For example, we have
	\[
	T(\varphi_{3}(1.82)\rightharpoonup\theta(100))=0.5-0.7/4=0.33\quad\text{and}\quad T(\varphi_{3}(1.9)\rightharpoonup\theta(100))=0.5-1/4=0.25
	\]
	\ In general, fix $z$ and if $y_1<y_2$, then $t_{y_1}<t_{y_2}$ and
	\[
	T(\varphi_{3}(y_1)\rightharpoonup\theta(z))\,=\,(4s_z-t_{y_1})/4\,>\,(4s_z-t_{y_2})/4 \,=\, T(\varphi_{3}(y_2)\rightharpoonup\theta(z))
	\]
	This is consistent with the fact that human height and weight are relevant because a tall man tends to be heavier than a short one. Consequently, a man with a lower height ($1.82m$) has a higher belief of \textit{heavy} for the same weight ($100kg$) than the one with a higher height ($1.9m$).
\end{proof}

\subsection{Contraction and Partition}
In this section, we will discuss additional operations for set bundles. 

\begin{definition}
	\label{DefContraction} \ Suppose $f\colon\mathcal{G}\to\mathcal{C}$ and $f(\mathcal{G})\subset\mathcal{C}$ is a set bundle. If for any $Y\in f(\mathcal{G})$, there is a $X\in\mathcal{G}$ that $Y\subset X$, then $f$ is known as a \textbf{contraction mapping} from $\mathcal{G}$ to $\mathcal{C}$, and $f(\mathcal{G})$ is a \textbf{contraction bundle} of $\mathcal{G}$.
\end{definition}

\begin{figure}[h]
	\center
	\includegraphics[scale=1]{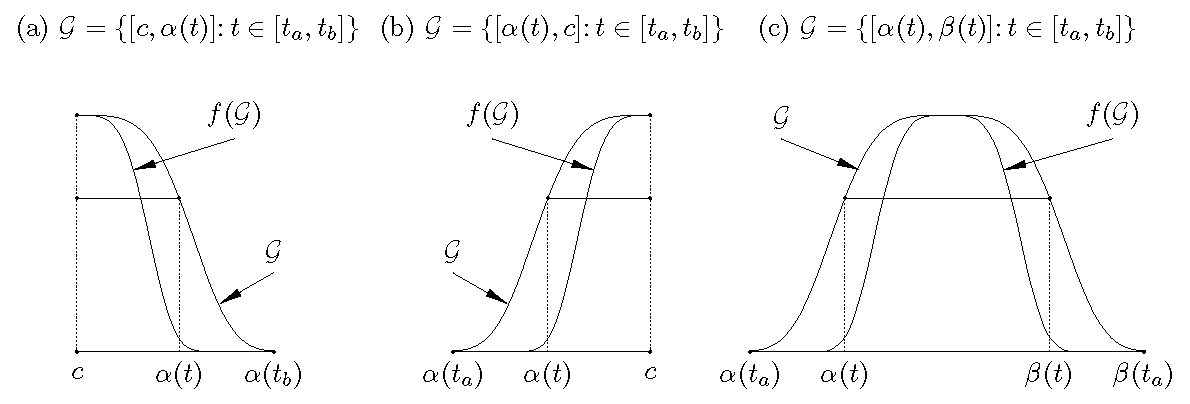}
	\caption{Diagrams of contraction of set bundles.}
	\label{Fig9}
	\center
\end{figure}

\begin{remark}
	\ In definition \ref{DefContraction}, note that $f(\mathcal{G}) \subset \mathcal{G}$ may not be true.
\end{remark}

\begin{lemma}
	\label{54} \ Suppose $\left.\mathcal{G}\right\vert_{r}\,=\,\{I_{t}\colon t\in\left[t_{a},t_{b}\right] \subset \Bbb{R}\}$ and $\,I_t=[c,\alpha(t)]$ or $[\alpha(t),c]$ or $[\alpha(t),\beta(t)]$ where $\,\alpha(t)$ is a non-decreasing right continuous function and $\beta(t)$ a non-increasing left continuous function in $[a,b]^{\Bbb{R}}$, $a,b,c\in\Bbb{R}$ and $f$ is a mapping on intervals in $\Bbb{R}$. If for any $t\in\left[t_{a},t_{b}\right]$, $f(I_t)\subset I_t$, then $f$ is a contraction mapping of $\mathcal{G}$.
\end{lemma}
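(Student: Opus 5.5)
The plan is to verify Definition \ref{DefContraction} directly. That definition asks two things: that $f(\mathcal{G})$ be a set bundle inside the target collection $\mathcal{C}$, and that for every $Y\in f(\mathcal{G})$ there be some $X\in\mathcal{G}$ with $Y\subset X$. I take the target to be $\mathcal{C}=f(\mathcal{G})$ itself, viewed as a collection of subsets of $\Bbb{R}$, since $f$ is a mapping on intervals of $\Bbb{R}$.

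The containment condition is immediate. Let $Y\in f(\mathcal{G})$ be arbitrary. By definition of the image, $Y=f(I_t)$ for some $t\in[t_a,t_b]$. The hypothesis gives $f(I_t)\subset I_t$, so $X=I_t\in\mathcal{G}$ is the required witness, with $Y\subset X$. Nothing about the particular shape of $I_t$ is used here. The three forms $[c,\alpha(t)]$, $[\alpha(t),c]$ and $[\alpha(t),\beta(t)]$, together with the monotonicity and one-sided continuity of $\alpha$ and $\beta$, serve only to place $\mathcal{G}$ in the setting of Theorem \ref{74} and Definition \ref{DefWeightDensityFunction}.

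The main obstacle is that $f(\mathcal{G})$ must actually be a set bundle, meaning it needs a weight measure satisfying Axiom \ref{1}. I would handle this by pushing $r$ forward along $f$. Take the $\sigma$-field $\{\mathcal{B}\subset f(\mathcal{G})\colon f^{-1}(\mathcal{B})\in\sigma(\mathcal{G})\}$ on $f(\mathcal{G})$. Preimages commute with complements and countable unions, so this is a $\sigma$-field. On it define $r'(\mathcal{B})=r(f^{-1}(\mathcal{B}))$. The three conditions of Axiom \ref{1} then transfer from $r$:
\begin{enumerate}
\item nonnegativity holds because $r\geqslant0$;
\item $r'(f(\mathcal{G}))=r(\mathcal{G})=1$;
\item countable additivity holds because preimages of disjoint families are disjoint.
\end{enumerate}
Hence $\left.f(\mathcal{G})\right\vert_{r'}$ is a set bundle in the sense of Definition \ref{DefSetBundle}.

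The two parts together give that $f$ is a contraction mapping of $\mathcal{G}$ and that $f(\mathcal{G})$ is a contraction bundle. If the paper intends $\mathcal{C}$ to already carry its own weight, I would simply note that the defining condition of Definition \ref{DefContraction} concerns only the inclusions, and the argument above establishes those.
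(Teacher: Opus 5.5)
Your proposal is correct, and its core is exactly the paper's proof: for $Y\in f(\mathcal{G})$, write $Y=f(I_t)\subset I_t\in\mathcal{G}$ and apply Definition \ref{DefContraction}. The pushforward weight $r'(\mathcal{B})=r(f^{-1}(\mathcal{B}))$ is a valid supplement that the paper omits, since the paper simply takes the set-bundle requirement on $f(\mathcal{G})$ for granted.
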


\begin{proof}
	\ For any $Y\in f(\mathcal{G})$, $Y= f(I_t)\subset I_t\in\mathcal{G}$. So it follows by definition \ref{DefContraction}.
	(A diagram of $f(\mathcal{G})$ is shown in Figure \ref{Fig9}.) \bigskip
\end{proof}

\begin{remark}
\ Contraction of set bundles is often used in modeling adverbs such as \textit{very} (definition \ref{DefVery}).\end{remark}

\begin{definition}
	\label{DefPartition} \ Suppose $\left.\mathcal{G}_i\right\vert_{r_i} (1\leqslant i \leqslant n)$ are $n$ set bundles and A is a set. If for any $x\in A$, $\,\sum\limits_{1\leqslant i \leqslant n}T(x\sbin\mathcal{G}_i)\,=\,1$, then $\bigcup\limits_{1\leqslant i \leqslant n}\mathcal{G}_i$ is known as a \textbf{partition} of $A$.
\end{definition}

\begin{figure}[h]
	\centering
	\includegraphics{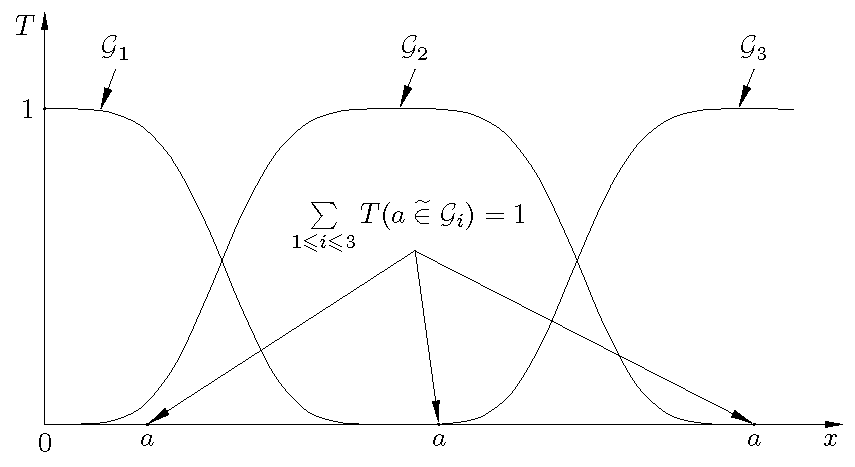}
	\caption{Diagram of partition of set bundles.}
	\label{Fig10}
	\centering
\end{figure}

\begin{remark}
	\ In definition \ref{DefPartition}, $\bigcup\limits_{1\leqslant i \leqslant n}\mathcal{G}_i$ is generally not a set bundle because $r(\mathcal{G}_i)=1$ for each $i$. If $\bigcup\limits_{1\leqslant i \leqslant n}\mathcal{G}_i$ is specified as a set bundle, its weight measure must be renormalized to $1$.
\end{remark}

\begin{example}
	\ Suppose $\mathcal{G}_i \ (1 \leqslant i \leqslant 3)$ are set bundles on $\Bbb{R}$. If for any $a\in \Bbb{R}$, $\sum\limits_{1\leqslant i \leqslant 3}T(a\sbin\mathcal{G}_i)=1$, then $\mathcal{G}_i$ is a partition on $\Bbb{R}$. A diagram is shown in Figure \ref{Fig10}.\medskip
\end{example}

Partition of set bundles is important in describing nouns by adjectives. For example, \textit{age} can be described by \textit{young}, \textit{middle-aged} and \textit{old}. See problem \ref{76} and section \ref{SectionAdjectives} for more detail.

\section{Set Bundles in Linguistics}\label{SectionLinguistics}

In this section, we will investigate further the modeling of adjectives and adverbs in linguistics based on set bundles and degree of truth.

\subsection{Adjectives} \label{SectionAdjectives}
An adjective is a modifier of a noun. Due to the diversity of adjectives, we will only discuss the modeling of adjectives based on set bundles for a few (important) examples in this paper.

\begin{definition}
	Suppose $A_k\:(1\leqslant k \leqslant n)$ are $n$ adjectives, each of which is represented by the set bundle $\mathcal{G}_i$, and $\mathscr{R}_N$ is the range of a noun $N.$ If $\bigcup\limits_{1\leqslant i \leqslant n}\mathcal{G}_i$ is a partition of $\mathscr{R}_N$ (definition \ref{DefPartition}), then the adjectives $A_k$ are known as a \textbf{partition} of the noun $N$.
\end{definition}

By problem \ref{76}, \textit{young}, \textit{middle-aged} and \textit{old} form a partition of \textit{age} (for the human). The following examples are similar.

\begin{example}	\label{56}
\end{example}
\begin{enumerate}	
	\item \textit{Length} can be partitioned by \textit{short}, \textit{medium} and \textit{long}.
	
	
	\item \textit{Height} (for objects) can be partitioned by \textit{low}, \textit{average} and \textit{high}.
	
	\item \textit{Weight} can be partitioned by \textit{light}, \textit{medium} and \textit{heavy}.
	
	\item \textit{Size} can be partitioned by \textit{small}, \textit{medium} and \textit{big}.
	
	\item \textit{Speed} can be partitioned by \textit{slow}, \textit{average} and \textit{fast}.
	
	\item \textit{Sound} can be partitioned by \textit{quiet}, \textit{average} and \textit{loud}.	
	
	\item \textit{Luminance} can be partitioned by \textit{dark}, \textit{medium} and \textit{bright}.
	
	\item \textit{Amount} can be partitioned by \textit{few}, \textit{several}, \textit{average}, \textit{many} and \textit{most}.
	
	\item \textit{Temperature} can be partitioned by \textit{cold}, \textit{cool}, \textit{moderate}, \textit{warm} and
	\textit{hot}.
\end{enumerate}

Next, we will study similarity relation that is described by the adjective \textit{similar} (or \textit{similar to}). It also includes relations depicted by \textit{close}, \textit{approximate}, \textit{analogous} and so on. \medskip

Obviously, similarity relation generally can not be described by a formal predicate because there is no clear-cut value in a similarity (belief) measure. For example, let $x,y\in\Bbb{R}$ and $\psi(x,y) \rightleftharpoons$ \textquotedblleft$x$ is close to $y$\textquotedblright. Clearly, $T(\psi)=1$ if $x=y$, and $T(\psi)$ is inversely proportional to $|x-y|$, i.e. the larger the distance between $x$ and $y$, the less likely they are close to each other. This example shows that the notion of a similarity distance is essential for a similarity measure.

\begin{definition}
	\label{DefSimilarMeasure}\ Suppose $\mathcal{E}$ is a universe of discourse and $A,B\in \mathcal{E}$. Then a function $d\colon \mathcal{E}\times \mathcal{E}\to\Bbb{R}$ is known as a \textbf{similarity distance} if it satisfies the axioms for a metric space. Let $\psi(A, B)\rightleftharpoons$ \textquotedblleft$A$ is similar to $B$\textquotedblright. Then $\psi$ is a predicate in $\mathfrak{M}$ with $\widetilde{\mathcal{S}}(\psi) = \left(\{d\}, \{\left.\mathcal{G}\right\vert_{r}\}, \subset\right)$ ($\left.\mathcal{G}\right\vert_{r}$ is given in theorem \ref{70}(i)). A \textbf{similarity belief measure} for $A$ and $B$ is defined to be $T(\psi(A,B))$ that is inversely proportional to $d(A,B)$. 
\end{definition}

\begin{definition}
	\label{DefCloseness}\ Suppose $x,y\in \Bbb{R}$. Then the \textbf{closeness} of $x$ and $y$ is defined to be the similarity measure $T(\psi(x,y))$ with the similarity distance being $|x-y|$ as in definition \ref{DefSimilarMeasure}. This also applies to linguistic expressions such as \textquotedblleft$\,x$ is close to $y$\textquotedblright, \textquotedblleft$\,x$ is an approximation of $y$\textquotedblright, \textquotedblleft$\,x$ is approaching $y$\textquotedblright, \textquotedblleft the difference between $x$ and $y$ is small\textquotedblright, and so on.
\end{definition}

\begin{problem}
	\ Suppose $\psi(x,y)\rightleftharpoons$\textquotedblleft$\,x$ is close to $y$\textquotedblright \ $(x,y\in\Bbb{R})$ and $\,\widetilde{\mathcal{S}}(\psi(x,y))=\left(\{|x-y|\},\left.\mathcal{G}\right\vert_r,\subset \right)$ where $\left.\mathcal{G}\right\vert_r=\,\{I_{t}\colon I_{t}=[0,\alpha(t)]\,\wedge \,\alpha(t)=t\,\wedge \,t\in[0, \infty)\,\wedge \,f(t)=2t\,e^{-t^2}\,\}$. Find $T(\psi(x,y))$ and calculate for $|x-y|\,=\,0,\,0.1,\,0.5$.
\end{problem}

\begin{proof}
	[Solution] \ Let $a=0,\:b=1$ and $n=1$ in corollary \ref{59}. Then
	\[
	T(\psi(x,y))\,=\,T(|x-y|\sbin\mathcal{G}) \,=\,e^{-|x-y|^2}
	\]
	If $|x-y|\,=\,0,\:T(\psi(x,y))\,=\,1$, which means that $x$ is \textit{identical to} $y$. For $|x-y|\,=\,0.1$ and $0.5$, $T(\psi(x,y))\,=\,e^{-0.01}\,=\,0.99\,$ and $\,T(\psi(x,y))\,=\,e^{-0.25} \,=\,0.78$. \bigskip
\end{proof}

Next, we will investigate inexact circles by similarity measure. In \cite[p214]{Kosko}, Kosko presents an inexact oval as a fuzzy ellipse to dispute a Lindley's claim \cite{Lindley}: \textit{Probability is the only sensible description of uncertainty and is adequate for all problems involving uncertainty. All other methods are inadequate}.\smallskip

Lindley's claim is wrong because not all uncertainty issues can be handled by probability. However, the fuzzy ellipse by Kosko is inadequate because it lacks mathematical rigor to define the exact degree of elliptical roundness.\smallskip

Since \textquotedblleft$x$ is an inexact circle\textquotedblright \ is equivalent to \textquotedblleft$x$ is similar to an (exact) circle\textquotedblright,\footnote{For simplicity, we only consider inexact circles in this paper.} we can give a precise definition for inexact circles based on similarity distance and measure.

\begin{definition}
	\label{DefInexacCircle}\ Suppose $C$ is an inexact circle (Figure \ref{Fig11}(a)) with $\left\vert K_{c}\right\vert \leqslant \varepsilon$ ($K_{c}$ is the maximum curvature of $C$)\footnote{This condition is to remove inexact circles with large curvature as shown in Figure \ref{Fig11}(b).} and $\mathbf{o}$ is the center of $C$. Then the similarity distance is the \textbf{relative maximum deviation} of $C$ as:
	\[
	d_c\:=\,\dfrac{\,\sup\limits_{\mathbf{z}\in C}\left\vert\mathbf{z} - \mathbf{o}\right\vert -\inf\limits_{\mathbf{z}\in C} \left\vert\mathbf{z} -\mathbf{o}\right\vert}{\sup\limits_{\mathbf{z}\in C}\left\vert\mathbf{z} - \mathbf{o}\right\vert}
	\]
	Let $\phi(d_c)\rightleftharpoons\,$\textquotedblleft $\,C$ is an inexact circle\textquotedblright\ (or \textquotedblleft $\,C$ is round\textquotedblright)\ and $\,\widetilde{\mathcal{S}}(\phi(d_c))= \left(\{d_c\},\left.\mathcal{G}\right\vert_{r},\subset \right)$ where $\left.\mathcal{G}\right\vert_{r}$ is given in theorem \ref{70}(i) with $c=0$. Then the \textbf{degree of roundness} (similarity measure) of $C$ is defined to be $T(\phi\left(d_{c}\right))$.
\end{definition}

\begin{figure}[h]
	\centering
	\includegraphics{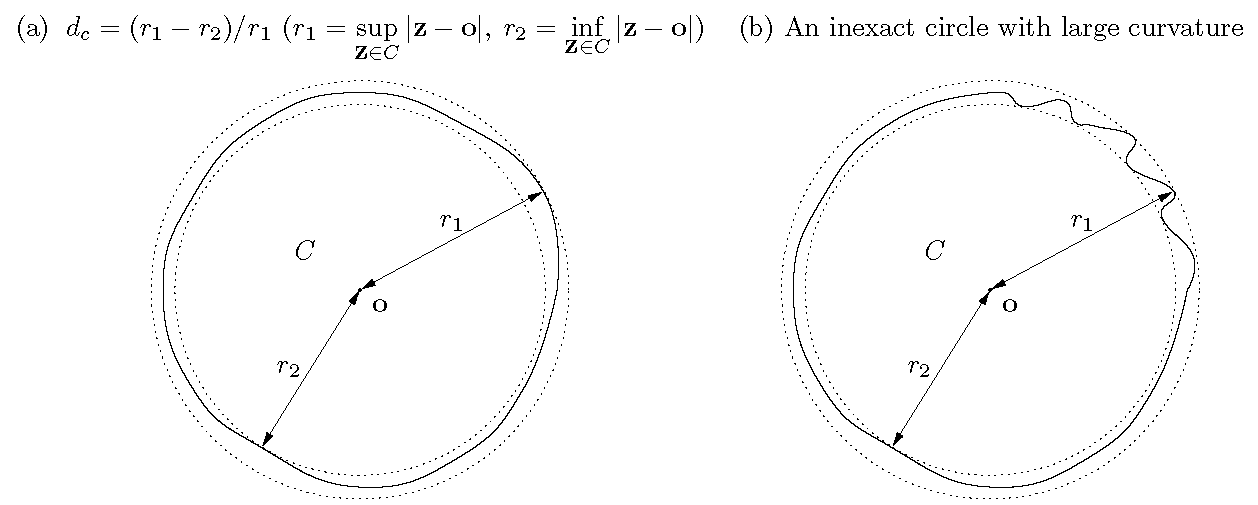}
	\caption{Diagrams of inexact circles.}
	\label{Fig11}
	\centering
\end{figure}

Obviously, if $d_{c}=0$, $C$ is an exact circle (one hundred percent round). If $d_{c}\neq 0$, $C$ has a degree of roundness between zero and one with a bigger $d_{c}$ having a less degree of roundness. Since the degree of roundness is distinctively different from the probability measure, probability is not the only description of uncertainty. \medskip

The adjective \textit{possible} and \textit{necessary} are of particular importance and studied by a separate field known as modal logic. We will devote a separate section (section \ref{SectionModalLogic}) to investigating it. 

\subsection{Adverbs} \label{SectionAdverbs}
Adverbs can be modifiers of verbs or adjectives. Due to the diversity of adverbs, we will only investigate the modeling of adverbs for a few (important) examples in this section. \smallskip

An adverb modifying a verb can often be reduced to an adjective modifying a noun. For instance, \textquotedblleft he is running quickly\textquotedblright\ is equivalent to \textquotedblleft his running speed is fast\textquotedblright. Here are more examples.

\begin{example}
\end{example}
\begin{enumerate}
	\item \textquotedblleft He is speaking loudly\textquotedblright\ is equivalent to \textquotedblleft his sound level is high\textquotedblright.
	
	\item \textquotedblleft He finished the job quickly\textquotedblright\ is equivalent to \textquotedblleft his time of completing the job was short\textquotedblright.
	
	\item \textquotedblleft He drove the car slowly\textquotedblright\ is equivalent to \textquotedblleft his driving speed was slow\textquotedblright.
	
	\item \textquotedblleft He often visits us\textquotedblright\ is equivalent to \textquotedblleft the number of times he visits us is high\textquotedblright.
	
\end{enumerate}

Therefore, this type of adverb can be modeled in the same way as adjectives discussed in the previous section (example \ref{56}). Next, we will study adverbs that are modifiers of adjectives such as \textit{very} and \textit{much}. \medskip

The adverb \textit{very} can strengthen the adjective it modifies. For examples, an age of $20$ has one hundred percent belief of \textit{young} but can have only fifty percent belief of \textit{very young}; a height of six feet is one hundred percent of \textit{tall} (for men) but may be only seventy percent of \textit{very tall}, and so on. The contraction operation on set bundles provides a rigorous model for \textit{very}. 

\begin{definition}
	\label{DefVery} \ Suppose A is an adjective, $\phi(x)$ is a predicate in $\mathfrak{M}$ involving A and $\,\widetilde{\mathcal{S}}(\phi(x)) = \left(\{x\}, \left.\mathcal{G}\right\vert_{r},\subset \right)$ ($\left.\mathcal{G}\right \vert_{r}$ is given in theorem \ref{70}(i) or (ii)). Then the adverb \textbf{very} in a predicate $\eta(x)$ involving A can be modeled by a contraction bundle of $\mathcal{G}$ (definition \ref{DefContraction}), i.e. $\widetilde{\mathcal{S}}(\eta(x)) = \left(\{x\}, f(\mathcal{G}), \subset\right)$.
\end{definition} 

\begin{problem}
	\ Suppose $\eta_1(x)\rightleftharpoons $\textquotedblleft $\,x$ is very young\textquotedblright\ and $\,\widetilde{\mathcal{S}}(\eta_1(x)) = \left(\{x\},\left.\mathcal{H}_1\right\vert_{r_1},\subset \right)$, $\eta_2(x)\rightleftharpoons $\textquotedblleft $\,x$ is very old\textquotedblright\ and $\,\widetilde{\mathcal{S}}(\eta_2(x)) = \left(\{x\},\left.\mathcal{H}_2\right \vert_{r_2},\subset \right)$, where
	\[
	\left.\mathcal{H}_1\right\vert_{r_1}=\,\{I_t\colon I_t=[0,\delta(t)]\,\wedge \,\delta(t)=20t+10\,\wedge \,0\leqslant 	t\leqslant 1\,\wedge \,f_1(t)=0.05\,\}
	\]
	and
	\[
	\left.\mathcal{H}_2\right\vert_{r_2}=\,\{K_{t}\colon K_{t}=[\nu(t),\,150]\,\wedge \,\nu(t) = 20t+60\,\wedge \,0\leqslant t\leqslant 1\,\wedge \,f_2(t)=0.05\,\}	
	\]
	Also, $\mathcal{G}_1$ and $\mathcal{G}_3$ $(\alpha(t)\operatorname{ \textit{and} }\gamma(t))$ are given in problem \ref{76}.
\end{problem} 

\begin{enumerate}
	\item \textit{Show that $\mathcal{H}_1$ is a contraction bundle of $\mathcal{G}_1$ and $\mathcal{H}_2$ a contraction bundle of $\mathcal{G}_3$}. 
	
	\item \textit{Find $\,T(\eta_1(x))$ and $\,T(\eta_2(x))$. Compare the results with problem \ref{76}.}
\end{enumerate}

\begin{proof}[Solution] (i) \ Let $f([0,\alpha(t)])=[0,\delta(t)]\in \mathcal{H}_1$. Since for any $t\in[0,1], \:[0,\delta(t)] \subset [0,\alpha(t)]$, by lemma \ref{54} and definition \ref{DefContraction}, $f$ is a contraction mapping and $\mathcal{H}_1=f(\mathcal{G}_1)$. Likewise, $\mathcal{H}_2$ is a contraction bundle of $\mathcal{G}_3$. (See Figure \ref{Fig12} for more detail.) 
	
	\begin{figure}[h]
		\centering
		\includegraphics{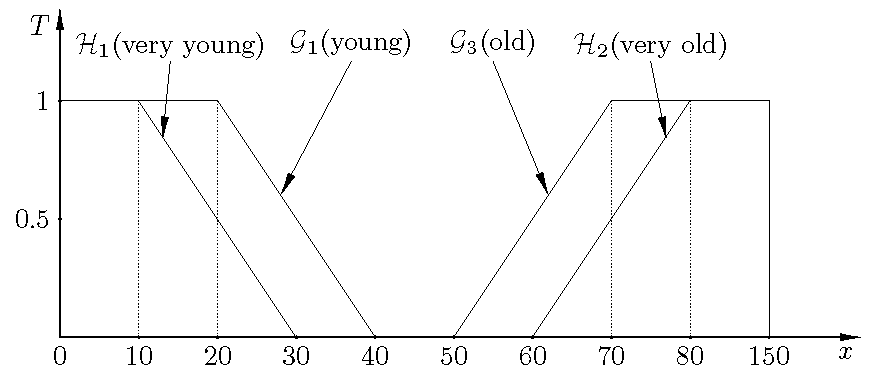}
		\caption{Diagram of belief distributions for adverb \textit{very} on adjectives.}
		\label{Fig12}
		\centering
	\end{figure}
	
	(ii) \ If $x\leqslant10,\:T(\eta_1(x))=1$ for $\left[\eta_1(x)\right]=\mathcal{H}_1$. If $x>30,\:T(\eta_1(x))=0$ for $\left[\eta_1(x)\right]=\varnothing$.\medskip
	
	If $10\leqslant x\leqslant30$, let $\delta(t_x)=x$. Then by corollary \ref{58}(i), $T(\eta_1(x))\,=\,1-t_x$. \medskip
	
	Likewise, if $x\geqslant80,\:T(\eta_2(x))=1$ for $\left[\eta_2(x)\right]=\mathcal{H}_2$. If $x<60,\:T(\eta_2(x))=0$ for $\left[\eta_2(x)\right]=\varnothing$. If $60\leqslant x\leqslant80$, let $\nu(s_x)=x$. Then by corollary \ref{58}(ii), $T(\eta_2(x))\,=\,s_x$. \medskip
	
	If $10\leqslant x\leqslant40$, let $\alpha(t_x')=x$ in problem \ref{76}. Then $t_x>t_x'\,$ and $\,T(\eta_1(x))\,=\,1-t_x <1-t_x'\,=\,T(\phi_1(x))$. \medskip
	
	Likewise, for $50\leqslant x\leqslant80$, let $\gamma(s_x')=x$. Then $s_x<s_x'\,$ and $\,T(\eta_2(x)) < T(\phi_3(x))$.\medskip
	
	By comparing them with results in problem \ref{76}, we can see that \textit{very} can strengthen \textit{young} and \textit{old} by lowering the beliefs. For example, $20$ has 100\% belief of \textit{young} but only 50\% percent of \textit{very young}, while $75$ has 100\% belief of \textit{old} but only 75\% percent of \textit{very old}. This strengthening effect of \textit{very} is consistent with our intuition. \bigskip
\end{proof}

Sentences involving comparative adjectives are generally predicates in formal logic. For example, let $\psi\Leftrightarrow$ \textquotedblleft John is older than Jason\textquotedblright.\ Then $\psi$ has a set representation of $(\{x-y\},(0,\infty),\subset)$ where $x$ and $y$ are ages of John and Jason respectively. So $\psi$ is true iff $x>y$. However, comparative adjectives involving the adverb \textit{much} are generally predicates in the multi-valued logic $\mathfrak{M}$. For instance, \textquotedblleft John is much older than Jason\textquotedblright\ must be a predicate in $\mathfrak{M}$ because there is no clear-cut value of $x-y$ to define \textit{much}. 

\begin{definition}
	\label{DefMuch}\ Suppose A is a comparative adjective and $\psi(x,y)$ is a formal predicate involving A ($x$ and $y$ are in the range of the subject and object in $\psi$). Then the addition of the adverb \textbf{much} to $\psi$ as a modifier of A forms a predicate $\xi(z)$ in $\mathfrak{M}$ that can be modeled by $\,\widetilde{\mathcal{S}}(\xi(z)) = \left(\{z\}, \left.\mathcal{G}\right\vert_{r}, \subset\right)$ where $z=x-y$ or $z=y-x$.
\end{definition}

\begin{problem}
	\label{55}\ Suppose $\xi_1(z)\rightleftharpoons $\textquotedblleft $\,x$ is much older than $y$\textquotedblright\ and $\,\widetilde{\mathcal{S}}(\xi_1(z)) = \left(\{z\},\left.\mathcal{G}\right \vert_{r},\subset \right)\:(z=x-y)$, $\xi_2(z)\rightleftharpoons $\textquotedblleft $\,x$ is much younger than $y$\textquotedblright\ and $\,\widetilde{\mathcal{S}}(\xi_2(z)) = \left(\{z\},\left.\mathcal{G}\right \vert_{r},\subset \right)\:(z=y-x)$, where
	\[
	\left.\mathcal{G}\right\vert_{r}=\,\{I_t\colon I_t=[t,150] \,\wedge \,t\in[0,10]\,\wedge \,f(t)=4b\left(10-t\right)^3 e^{-b(10-t)^4}+e^{-10}/{10}\,\wedge\,b=10^{-3}\,\}
	\]
	Find $\,T(\xi_1(z))$ and calculate for $z=3,\,6,\,10$, and $\,T(\xi_2(z))$ for $z=-2,\,8,\,15\:(\text{years})$.
\end{problem}

\begin{proof}
	[Solution] \ For $z> 10$, by definition \ref{DefWeightDensityFunction}
	\[
	T(\xi_1(z))\,=\int_0^{z}f(t)\,dt\,=\int_0^{10}f(t)\,dt\,=\left.\left(e^{-b(t-10)^4}+\dfrac{e^{-10}t}{10}\right) \right\vert_0^{10}\,=\,1
	\]
	
	For $0\leqslant z\leqslant 10$, let $z=t_z$. By theorem \ref{70}(ii)
	\[
	T(\xi_1(z))\,=\int_0^{z}f(t)\,dt\,=\left.\left(e^{-b(t-10)^4}+\dfrac{e^{-10}t}{10}\right)\right\vert_0^{z}\,=\, e^{-b(z-10)^4}+e^{-10}(0.1z-1)
	\]
	
	Therefore, $T(\xi_1(10))\,=\,1,\:T(\xi_1(6))\,=\,0.77,\:T(\xi_1(3))\,=\,0.09$.\medskip
	
	Likewise, $T(\xi_2(z))\,=\,T(\xi_1(z))$. So $T(\xi_2(8))=0.98\,$  and  $\,T(\xi_2(15))=1$. \medskip
	
	For $y-x=-2,\, \left[\xi_2(-2)\right] = \varnothing$ and $T(\xi_2(-2))=0$. This means that \textquotedblleft $x$ is much young than $y$\textquotedblright\ is completely false because $x$ is actually older than $y$. \smallskip
\end{proof}

\begin{remark}
	\ From problem \ref{55}, we can see that two opposite comparative adjectives can have the same set bundle that is different from the one for each individual adjective. For example, $\mathcal{G}$ in problem \ref{55} is different from $\mathcal{G}_1$ (for \textquotedblleft young\textquotedblright) and $\mathcal{G}_3$ (for \textquotedblleft old\textquotedblright) in problem \ref{76}.
\end{remark}

\section{More Applications}

In this section, we will apply predicate bundles to the investigation of some important fields and problems in non-classical logic.

\subsection{Modal Logic} \label{SectionModalLogic}

Modal logic is a formal system that studies statements involving \textit{necessary} and \textit{possible} (and synonyms) \cite{BlackburnP} in a natural language. Since necessity and possibility belong to non-classical logic, multi-valued logic and truth degree are essential for them. We will see that the theory of predicate bundles provides an ideal framework on possibility and necessity that allows us to gain more perspective on the nature of the pair. First, let's look at the following example. \smallskip

Let $p\Leftrightarrow\,$\textquotedblleft he is in the office\textquotedblright. Obviously, $p$\ is a formal predicate that is equivalent to $``x\in A_1"$, where $x$ is the position (a GPS coordinate) of \textquotedblleft he\textquotedblright\ and $A_1$ is the location of the office (a collection of GPS coordinates). Since \textquotedblleft he is possibly in the office\textquotedblright\ means there are other locations he could be in, $p$ needs to be expanded to a predicate $\overline{p}$ in $\mathfrak{M}$ where $\widetilde{\mathcal{P}}(\overline{p}) =(\left.\mathscr{P}\right\vert_{r}, \Bbb{R}^3)$ and $\left.\mathscr{P}\right\vert_{r}=\{p_n\colon p_n\Leftrightarrow ``u\in A_n"\wedge \,r(p_n)\,\wedge\, n\geqslant1\}$ ($A_2, A_3,\cdots$ are other locations). Consequently, we can define $\Diamond p$ (meaning $p$ is possibly true) to be a formal predicate that $T(\overline{p})>0$ ($T(\overline{p})=0$ implies $p$ is necessarily false). Furthermore, we can define $\square p$ (meaning $p$ is necessarily true) to be a formal predicate that $T(\overline{p})=1$. Since a person can only have one position at a time, there is only one $p_n$ holds in $\mathscr{P}$, say $p_k$ is true. So $r(p_k)=1$ and $r(p_n)=0$ for any other $n$. Obviously, if $p$ is true, then $T(\overline{p})=1$ or $p$ is necessarily true (proposition \ref{30}(i)). \smallskip

Let's take a look at another example. Suppose $\phi_1\rightleftharpoons\,$\textquotedblleft he is young\textquotedblright\ (problem \ref{16}). Obviously, \textquotedblleft he is possibly young\textquotedblright\ and \textquotedblleft he is necessarily young\textquotedblright\ both need $\phi_1$ to be expanded, say to a $\overline{\phi}_1$ where $\widetilde{\mathcal{P}}\left(\overline{\phi}_1\right)= \left(\left.\left( \mathscr{P}_{1}\cup \mathscr{P}'\right)\right\vert_{r}, [0,150]\right)$ and $\mathscr{P}'$ is a predicate bundle for adjectives other than \textit{young}. For instance, since he could be either young or old, $\mathscr{P}'$ should include that of \textit{old}. Thus $\Diamond \phi_1$ is true if $T(\overline{\phi}_1)>0$ for otherwise he is necessarily old, and $\square \phi_1$ is true if $T(\overline{\phi}_1)=1$ for otherwise he is possibly young. \smallskip

Since a formal predicate can be considered as a predicate in $\mathfrak{M}$ whose predicate bundle consists of only one predicate (itself) with the weight of one (corollary \ref{28}), we have the following definitions.

\begin{definition}
	\label{DefNecessity} \ Suppose $p\in\mathfrak{M}$ and $\widetilde{\mathcal{P}}(p) = \left(\left.\mathscr{P}\right\vert_{r}, \mathscr{D}\right)$. Let $\overline{p}$ be an expansion of $p$ in $\mathfrak{M}$, i.e. $\widetilde{\mathcal{P}}(\overline{p})=\left(\left.\overline{\mathscr{P}} \right\vert_{r}, \mathscr{D}\right)$ and $\mathscr{P}\subset\overline{\mathscr{P}}$.\footnote{In this section, we assume $p$ and $q$ as well as $\overline{p}$ and $\overline{q}$ are in $\mathfrak{M}$ unless further specified.} Then $\square p$ that means $p$ is \textbf{necessarily true} (or it is necessary that $p$ is true) is a formal predicate and $\square p\,\Leftrightarrow\left(T(\overline{p})=1\right)$.
\end{definition}

\begin{definition}
	\label{DefPossibility} \ Suppose everything is the same as in definition \ref{DefNecessity}. Then $\Diamond p$ that means $p$ is \textbf{possibly true} (or it is possible that $p$ is true) is a formal predicate  and $\Diamond p\, \Leftrightarrow \left(T(\overline{p})>0\right)$.
\end{definition}

\begin{lemma}
	\label{46}\qquad
\end{lemma}

\begin{enumerate}
	\item \textit{$\square\urcorner p${\ means }$p$\ is necessarily false (or $\urcorner p$ is necessarily true) and $\square\urcorner p\Leftrightarrow\left(T(\overline{p})=0\right)$.}
	
	\item \textit{$\Diamond\urcorner p$ means $p$ is possibly false (or $\urcorner p$ is possibly true) and $\Diamond\urcorner p\Leftrightarrow\left(T(\overline{p})<1\right)$.}
\end{enumerate}

\begin{proof}
	\ (i) By definition \ref{DefNecessity} and \ref{DefCentralBundle}, for $x\in\mathscr{D}$
	\[
	\left[\overline{\urcorner p}(x)\right]\,=\,\{u\colon u\in\mathscr{{\overline{P}}}\,\wedge\,\vdash\urcorner u(x)\}\,=\,\left[\overline{p}(x)\right]^c
	\] 
	
	So by corollary \ref{12}, $T(\overline{p})\,=\,1-T(\overline{\urcorner p}) \,=\,0$.\medskip
	
	(ii) follows by definition \ref{DefPossibility} and (i). 
\end{proof}

\begin{proposition}
	\label{48}\qquad
\end{proposition}

\begin{enumerate}
	\item $\square p\,\Leftrightarrow\,\urcorner\Diamond\urcorner p$
	
	\item $\Diamond p\,\Leftrightarrow\,\urcorner\square\urcorner p$
\end{enumerate}

\begin{proof}
	\ (i) \ By definition \ref{DefNecessity}, \ref{DefPossibility} and lemma \ref{46}
	\[
	\square p\,\Longleftrightarrow\,\urcorner\left(T(\overline{p})<1\right)\,\Longleftrightarrow\,\urcorner\Diamond\urcorner p
	\]
	
	(ii) is similar to (i).$\bigskip$
\end{proof}

Now we prove that definition \ref{DefNecessity} and \ref{DefPossibility} are consistent with axioms in modal logic.

\begin{proposition}
	\label{30}\ \ The following axioms in modal logic hold in $\mathfrak{M}$.
\end{proposition}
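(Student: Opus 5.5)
Proposition \ref{30} lists no axioms, so I will assume the standard ones: (i) necessitation in the form $p\Rightarrow\square p$ (the section's opening remark says a true formal $p$ is necessarily true), (ii) axiom T, $\square p\Rightarrow p$, and $p\Rightarrow\Diamond p$, (iii) axiom K, $\square(p\Rightarrow q)\Rightarrow(\square p\Rightarrow\square q)$, and (iv) axiom D, $\square p\Rightarrow\Diamond p$, together with the S5-type axioms $\Diamond p\Rightarrow\square\Diamond p$ and $\square p\Rightarrow\square\square p$. Everything reduces to statements about $T(\overline{p})\in[0,1]$ through definitions \ref{DefNecessity}, \ref{DefPossibility}, lemma \ref{46} and proposition \ref{48}.

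For (i) I follow the office example. A formal predicate $p$ is a predicate in $\mathfrak{M}$ whose bundle is $\{p\}$ with weight one. In the expansion $\overline{\mathscr{P}}$, the weight is concentrated on the predicate actually satisfied, as with $r(p_k)=1$ in the example. If $p$ holds, then $p\in[\overline{p}]$ and $r(p)=1$, so $T(\overline{p})=1$, which gives $\square p$. For (ii), $\square p$ means $T(\overline{p})=1$, and the weight measure concentrates on the true predicate, so that predicate must be $p$ itself; hence $p$ holds. The implication $p\Rightarrow\Diamond p$ then follows because $T(\overline{p})=1>0$. Axiom D is immediate, since $T(\overline{p})=1$ implies $T(\overline{p})>0$.

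For (iii), I interpret $\square(p\Rightarrow q)$ as $T(\overline{p}\rightharpoonup\overline{q})=1$. Together with $T(\overline{p})=1$, corollary \ref{41} (modus ponens) gives $T(\overline{q})=1$, which is $\square q$. The iterated axioms hold because $\square p$ and $\Diamond p$ are formal predicates. By the remark before definition \ref{DefNecessity}, a true formal predicate has truth degree $1$ under its trivial bundle, and by (i) it is then necessarily true.

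I expect the main obstacle to be the step relating $p$ to $T(\overline{p})$ in (i) and (ii). This step does not follow from the axioms for $r$ alone. It requires the modelling assumption that in the expansion the actually realized predicate carries full weight. I would state this assumption explicitly, taken from the office example, and cite corollary \ref{28}, rather than try to derive it.
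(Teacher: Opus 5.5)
\textbf{The gap: axiom T.} The axiom T part of your item (ii), $\square p\Rightarrow p$, is a genuine error. Proposition \ref{30} lists six axioms:
\begin{itemize}
\item $p\Rightarrow\square p$,
\item $\square(p\rightharpoonup q)\Rightarrow(\square p\Rightarrow\square q)$,
\item $\square p\Rightarrow\Diamond p$,
\item $p\Rightarrow\square\Diamond p$,
\item $\square p\Rightarrow\square\square p$,
\item $\Diamond p\Rightarrow\square\Diamond p$.
\end{itemize}
T is not among them, and the paper states in Proposition \ref{53}(i) that $\square p\not\Rightarrow p$ in $\mathfrak{M}$.

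Your derivation of T is also a non sequitur. $T(\overline{p})=1$ says only that the predicates of $\overline{\mathscr{P}}$ satisfied by $x$ carry total weight $1$. It says nothing about which predicate is satisfied, so ``that predicate must be $p$ itself'' does not follow.

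Your own concentration assumption in fact refutes T. In the office example, suppose he is at $A_3$. Then $[\overline{p}(x)]=\{p_3\}$ with $r(p_3)=1$, so $T(\overline{p})=1$, i.e.\ $\square p$ holds, while $p$ (``$x\in A_1$'') is false. More generally, if the realized predicate always carries weight $1$, it always lies in the central bundle, so $\square p$ holds for every $p$. Under that assumption your item (i) is true only for this trivial reason.

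\textbf{The rest of your plan.} Once T is dropped, your plan is the paper's proof:
\begin{itemize}
\item K is modus ponens, corollary \ref{41}. The paper writes this out as $T(\overline{p}\barwedge\overline{q})=T(\overline{p})=1$, hence $T(\overline{q})\geqslant 1$.
\item D is immediate.
\item $\square p\Rightarrow\square\square p$ and $\Diamond p\Rightarrow\square\Diamond p$ follow by applying (i) to the formal predicates ``$T(\overline{p})=1$'' and ``$T(\overline{p})>0$''.
\item The axiom you omitted, $p\Rightarrow\square\Diamond p$, follows the same way from your $p\Rightarrow\Diamond p$. The paper cites (i) and (iii) for it.
\end{itemize}

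For (i), replace the concentration assumption by the paper's. The formal $p$ has bundle $\{p\}$ with $r(p)=1$, and $r(p)=1$ persists in every expansion, so if $p$ holds then $T(\overline{p})\geqslant r(p)=1$.

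Be aware of one consequence. Under the paper's assumption $r(\overline{\mathscr{P}}-\{p\})=0$, so T would actually hold for formal $p$. The paper's argument for Proposition \ref{53}(i) is only that, for a general $p\in\mathfrak{M}$, $T(\overline{p})=1$ ``can not imply that $p$ is a formal predicate.'' In any case T is not part of the statement you were asked to prove.
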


\begin{enumerate}
	\item $p\,\Rightarrow\square p$
	
	\item $\square\left(p\rightharpoonup q\right) \Rightarrow\left(\square p\Rightarrow\square q\right)$
	
	\item $\square p\,\Rightarrow\Diamond p$
	
	\item $p\,\Rightarrow\square\Diamond p$
	
	\item $\square p\,\Rightarrow\square\square p$
	
	\item $\Diamond p\,\Rightarrow\square\Diamond p$
\end{enumerate}

\begin{proof}
	\ (i) \ Since $p$ is formal, let $\widetilde{\mathcal{P}}(p)=\left(\left.\mathscr{P}\right\vert_{r}=\{p\wedge r(p)=1\}, \mathscr{D}\right)$. Then in any expansion of $\left.\mathscr{P}\right\vert_{r}$, $r(p)=1$ and $T(\overline{p})=1$. So (i) follows by definition \ref{DefNecessity}.\medskip
	
	(ii) \ By definition \ref{DefNecessity} and (\ref{DegreeTruthImplication})
	\[
	T(\overline{p}\rightharpoonup \overline{q})=1\:\Longrightarrow\:T(\overline{p}\barwedge\overline{q})= T(\overline{p}) =1\text{ \ \ \ and so \ \ \ } T(\overline{q}) \,\geqslant\, T(\overline{p}\barwedge\overline{q}) \,=\,1
	\]
	
	(iii) \ Since $T(\overline{p})=1$, $T(\overline{p})>0$. So (iii) follows by definition \ref{DefPossibility}.\medskip
	
	(iv) \ By (i) and (iii).\medskip
	
	(v) \ If $\square p$, then $T(\overline{p})=1$. Since $``T(\overline{p})=1\textquotedblright$ is formal, $T(``T(\overline{p})=1\textquotedblright)\,=\,1$.\medskip
	
	(vi) \ If $\Diamond p$, then $T(\overline{p})>0$. Since $``T(\overline{p})>0\textquotedblright$ is formal, $T(``T(\overline{p})>0\textquotedblright)\,=\,1$.\smallskip
\end{proof}

\begin{proposition}
	\qquad
\end{proposition}

\begin{enumerate}
	\item $\square\left(p\barwedge q\right) \Leftrightarrow\left(\square p\wedge\square q\right)$
	
	\item $\Diamond(p\veebar q) \Leftrightarrow\left(\Diamond p\vee\Diamond q\right)$
	
	\item $\left(\square p\vee\square q\right) \,\Rightarrow\,\square\left(p\veebar q\right)$
	
	\item $\Diamond(p\barwedge q) \,\Rightarrow\left(\Diamond p\wedge\Diamond q\right)$
\end{enumerate}

\begin{proof}
	\ (i) \ By definition \ref{DefNecessity}, if $T(\overline{p}\,\barwedge\,\overline{q})\,=\,1$, then
	\[
	T(\overline{p})\,\geqslant\,T(\overline{p}\,\barwedge\,\overline{q})\,=\,1\text{ \ \ \ and \ \ \ }T(\overline{q})\,\geqslant\,T(\overline{p}\, \barwedge\,\overline{q})\,=\,1
	\]
	
	Conversely, if $T(\overline{p})=1\,$ and $\,T(\overline{q})=1,\,T(\urcorner\,\overline{p})=0\,$ and $\,T(\urcorner\,\overline{q})=0$. So by theorem \ref{34} and \ref{44}(vi)
	\[
	T(\urcorner\,\overline{p}\,\,\veebar\,\urcorner\,\overline{q})\,\leqslant\,T(\urcorner\,\overline{p})+T(\urcorner\,\overline{q})\,=\,0\text{ \ \ \ and
		\ \ \ }T(\overline{p}\,\barwedge\,\overline{q})\,=\,1-T(\urcorner\,\overline{p}\,\,\veebar\,\urcorner\,\overline{q})\,=\,1
	\]
	
	(ii) \ By proposition \ref{48} and (i). \medskip
	
	(iii) \ Since $T(\overline{p})\leqslant T(\overline{p}\,\veebar\,\overline{q})\,$ and $\,T(\overline{q})\leqslant T(\overline{p}\,\veebar\,\overline{q})$, it follows by definition \ref{DefNecessity}. \medskip
	
	(iv) \ Since $T(\overline{p}\,\barwedge\,\overline{q})\leqslant T(\overline{p})\,$ and $\,T(\overline{p}\,\barwedge\, \overline{q}) \leqslant T(\overline{q})$, it follows by definition \ref{DefPossibility}. \smallskip
\end{proof}

\begin{proposition}
	\label{53}\qquad
\end{proposition}

\begin{enumerate}
	\item $\square p\,\not \Rightarrow p$
	
	\item $\square\left(p\veebar q\right) \not \Rightarrow \left(\square p\vee\square q\right)$
	
	\item $\left(\Diamond p\wedge\Diamond q\right) \not \Rightarrow \Diamond(p\barwedge q)$
\end{enumerate}

\begin{proof}
	\ (i) \ Since $\square p$ only means $T(\overline{p})=1$, it can not imply that $p$ is a formal predicate. So (i) fails in $\mathfrak{M}$. \medskip
	
	(ii) \ Obviously, $T(\overline{p}\,\veebar\,\overline{q})\,=\,1\,$ can not imply $\,T(\overline{p})=1\,$ or $\,T(\overline{q})=1$. So (ii) fails in $\mathfrak{M}$. \medskip
	
	(iii) \ Obviously, neither $\,T(\overline{p})>0\,$ nor $\,T(\overline{q})>0\,$ can imply $T(\overline{p}\,\barwedge\, \overline{q})>0$. So (iii) fails in $\mathfrak{M}$. \smallskip
\end{proof}

\subsection{Solution to Sorites Paradox}

The sorites paradox, also known as the heap paradox, originated in an ancient Greek puzzle that creates a contradiction in describing the vagueness of predicates. The paradox can be constructed upon an argument (with premises) as follows.

\begingroup\renewcommand{\labelenumi}{(\alph{enumi})}

\begin{enumerate}
	\item (A large integer) $n$ grains of sand is a heap of sand.
	
	\item $n-1$ grains of sand is still a heap.
	
	\item So by repeating the above step, $1$ grain is a heap.
\end{enumerate}

For the formal structure of this argument, let $\phi(n)$ be the predicate \textquotedblleft$n$ grains of sand is a heap\textquotedblright. Then the above argument can be written as follows. (Assume $m$ is a large integer.)

\begin{enumerate}
	\item $\phi(m)\ $is true.
	
	\item For any $2\leqslant n\leqslant m$, $\phi(n)\Rightarrow\phi(n-1)$ is true.
	
	\item Then $\phi(1)$ is true.
\end{enumerate}

\endgroup

Since $\phi(m)$ is clearly true and $\phi(1)$ clearly false, a key to solving the heap paradox is to find out when the truth value of $\phi(n)$ changes. But since there is no clear-cut value of $n$ that $\phi(n)$ turns from true to false, the situation is very similar to the adjective \textit{young} discussed in section \ref{SectionIntroduction} where a predicate bundle and degree of truth come into effect. More specifically, the paradox can be resolved if we consider $\phi(n)$ as a predicate in $\mathfrak{M}$ and its degree of truth decreases as $n$ becomes smaller.

\begin{conclusion}
	\label{38} \ The sorites paradox is solved because there is a (small) $l$ $\left(1\leqslant n\leqslant l\ll m\right)  $ that $\phi(n)$ becomes false.
\end{conclusion}

\begin{proof}
	\ Let a predicate bundle for $\phi(n)$ be
	\[
	\left.\mathscr{P}\right\vert_{r}\,=\,\{p_{k}(z)\colon p_{k}(z)\Leftrightarrow(k+a-1\leqslant z)\,\wedge\,1\leqslant k\leqslant b-a\,\wedge\,r(p_{k})=1/(b-a) \,\land\, a,b\in \Bbb{N}\,\land\,b\gg a\,\}
	\]
	
	Obviously, $a\leqslant k+a-1\leqslant b-1$. So $T(\phi(n))=1$ for $n\geqslant b-1$ because $n$ satisfies all $p_{k}$ in $\mathscr{P}$, and $T(\phi(n))=0$ for $n<a$ because $n$ satisfies none of $p_{k}$ in $\mathscr{P}$. For $a\leqslant n<b-1,$ since for any $k\leqslant n-a+1$, $\,\vdash p_k(n)$, $\,\left[\phi(n)\right]\,=\,\{p_{k}\colon1\leqslant k\leqslant n-a+1\}$. Thus
	\begin{align*}
		T(\phi(n))  & \:=\, \frac{n-a+1}{b-a},\text{ \ \ \ \ }a\leqslant n<b-1
		\\
		& \:=\: 1,\qquad\qquad\ \ \ \ n\geqslant b-1
		\\
		& \:=\: 0,\qquad\qquad\ \ \ \ n<a
	\end{align*}
	
	Since $\left[\phi(n-1)\right]=\{p_{k}\colon1\leqslant k\leqslant n-a\}$, $\left[\phi(n-1)\right] \subset\left[\phi(n)\right]$. So from remark \ref{3}, $T(\phi(n)\, \barwedge\,\phi(n-1))\,=\,T(\phi(n-1))$. Also, for $n\geqslant b$, $T(\phi(n-1))=1$ and $T(\phi(n)\,\barwedge\,\phi(n-1))=1$. Thus by (\ref{DegreeTruthImplication})
	\begin{align*}
		T(\phi(n)\rightharpoonup\phi(n-1))  \,=\, \dfrac{T(\phi(n-1))}{T(\phi(n))}& \:=\, \frac{n-a}{n-a+1},\text{\ \ \ \ \ }a\leqslant n\leqslant b-1
		\\
		& \:=\: 1,\qquad\qquad\ \ \ \ n\geqslant b
		\\
		& \:=\: 1,\qquad\qquad\ \ \ \ n<a
	\end{align*}
	
	For $n\geqslant b,$ since $T(\phi(n))=1$ and $T(\phi(n)\rightharpoonup \phi(n-1))=1$, it is reduced to propositional logic and by modus ponens, $\phi(n-1)$ follows from $\phi(n)$. If $n<b$ and $n\gg a$,
	\[
	T(\phi(n))\approx1\text{ \ and \ }T(\phi(n)\rightharpoonup\phi(n-1))\approx1\text{. \ So \ }T(\phi(n-1))\,=\,T(\phi(n)\,\barwedge \,\phi(n-1))\,T(\phi(n))\,\approx\,1
	\]
	
	This means that we can almost imply $\phi(n-1)$ from $\phi(n)$. But when $n$ is close to $a$, $T(\phi(n))\approx0$ and $T(\phi(n)\rightharpoonup\phi(n-1))<1$. So $T(\phi(n))\approx0$ implies $T(\phi(n-1))\approx0$. Also, $T(\phi(n))=0$ when $n<a$. Thus $\phi(1)$ is false and the sorites paradox is solved. In other words, it is possible for \textquotedblleft$1$ grain is a heap\textquotedblright\ to be false despite \textquotedblleft$1$ million grains is a heap\textquotedblright\ is true because the deduction process fails to be true at a certain stage.
\end{proof}

\section{Conclusion}

In this section, we will compare the multi-valued logic $\mathfrak{M}$ with other (multi-valued) logics.

\subsection{Predicate Bundles and Other Logics}

First, we confirm the consistency of $\mathfrak{M}$ with propositional logic by showing that all the logical operations defined in $\mathfrak{M}$ are reduced to those in propositional logic if predicate bundles contain only one predicate.

\begin{corollary}
	\label{28}\ Suppose $\,\widetilde{\mathcal{P}}(\phi_{1})=\left(\{p\wedge r_{1}(p)=1\},\mathscr{D}_1\right)$ and $\,\widetilde{\mathcal{P}}(\phi_{2})= \left(\{q\wedge r_{2}(q)=1\},\mathscr{D}_2\right)$. Then $\urcorner\phi_{1}$, $\phi_{1}\barwedge\phi_{2}$, $\phi_{1}\veebar\phi_{2}$, $\phi_{1}\rightharpoonup\phi_{2}$, $\phi_{1}\rightleftharpoons\phi_{2}$ are reduced to those of propositional logic.
\end{corollary}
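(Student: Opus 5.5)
The plan is a direct computation of central bundles. With singleton bundles, every central bundle is either the whole bundle or empty, so every degree of truth is $0$ or $1$. Fix $x\in\mathscr{D}_1$ and $y\in\mathscr{D}_2$. By definition \ref{DefCentralBundle}, $[\phi_1(x)]=\{p\}$ if $\vdash p(x)$ and $\varnothing$ otherwise. Definition \ref{DefBelief} together with $r_1(p)=1$ and lemma \ref{14}(i) then gives $T(\phi_1(x))=1$ exactly when $p(x)$ holds, and $T(\phi_1(x))=0$ otherwise. The same holds for $\phi_2$ and $q$. So $T$ behaves as the classical truth value of $p(x)$, respectively $q(y)$.

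I will then treat each connective in turn.

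\textbf{Negation.} Corollary \ref{12} gives $T(\urcorner\phi_1(x))=1-T(\phi_1(x))$, which is the classical truth table of $\urcorner p$.

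\textbf{Conjunction and disjunction.} The joint bundle $\{p\}\times\{q\}$ has the single element $(p,q)$. By corollary \ref{52}, $r((p,q))=1$. Theorem \ref{13}(i) gives $[\phi_1(x)\barwedge\phi_2(y)]=[\phi_1(x)]\times[\phi_2(y)]$. This set is $\{(p,q)\}$ iff both $p(x)$ and $q(y)$ hold, and is empty otherwise, so its weight is $1$ or $0$ accordingly, i.e. the table of $p\wedge q$. For disjunction, theorem \ref{13}(ii) shows that $[\phi_1(x)\veebar\phi_2(y)]$ is nonempty iff $p(x)$ or $q(y)$ holds. Alternatively, theorem \ref{34} yields $T=a+b-ab$ with $a,b\in\{0,1\}$, which is the table of $p\vee q$.

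\textbf{Implication.} I apply (\ref{DegreeTruthImplication}) and split on $T(\phi_1(x))$. If $T(\phi_1(x))=0$, the definition gives $T=1$, matching the classical rule that a false antecedent makes the implication true. If $T(\phi_1(x))=1$, then $T(\phi_1\rightharpoonup\phi_2)=T(\phi_1\barwedge\phi_2)/1=T(\phi_2(y))$, matching the rows with a true antecedent. This yields exactly the table of $p\Rightarrow q$.

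\textbf{Equivalence.} By (\ref{EquivalenceMultiValueLogic}), $\phi_1(x)\rightleftharpoons\phi_2(y)$ iff $T(\phi_1(x))=T(\phi_2(y))$. Since both values lie in $\{0,1\}$ and equal the truth values of $p(x)$ and $q(y)$, this is $p(x)\Leftrightarrow q(y)$.

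The only delicate point is the implication case, where the division in (\ref{DegreeTruthImplication}) must be split according to whether $T(\phi_1)$ vanishes. I also need the single-element joint weight $r((p,q))=1$ from axiom \ref{17}/corollary \ref{52}, so that conjunction does not depend on an unspecified joint measure. Everything else is bookkeeping of the four truth-value cases. The sentence versions follow identically using corollary \ref{19} and (\ref{DegreeTruthImplicationSentence}).
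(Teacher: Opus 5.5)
Your proposal is correct and follows essentially the same route as the paper. Singleton central bundles force $T\in\{0,1\}$; negation comes from corollary \ref{12}; conjunction and disjunction come from the product and union form of the central bundle; implication is handled by splitting on whether $T(\phi_1)=0$; and equivalence comes straight from definition \ref{DefEquivalenceMultiValueLogic}. You also make explicit, via corollary \ref{52}, that $r((p,q))=1$ on the one-element joint bundle; the paper leaves this implicit, and it is what makes the conjunction and disjunction values independent of the unspecified joint measure.
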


\begin{proof}
	\ Modus ponens is proved in corollary \ref{41}. Since $\left[\phi_{1}\right]=\{p\}$ or $\varnothing$, $T(\phi_{1})=1$ or $0.$ By corollary \ref{12}, $T(\urcorner\phi_{1})=1-T(\phi_{1})$.\medskip
	
 	By corollary \ref{19}, $\left[\phi_{1}\barwedge\phi_{2}\right] \,=\,\left[\phi_{1}\right] \times\left[\phi_{2}\right]$. So we have
	\begin{align*}
		T(\phi_{1}\barwedge\phi_{2})\,=\,1  & \:\Longleftrightarrow\: \left(T(\phi_{1})=1\,\wedge\,T(\phi_{2})=1\right) 
		\\
		T(\phi_{1}\barwedge\phi_{2})\,=\,0  & \:\Longleftrightarrow\: \left(T(\phi_{1})=0\,\vee\,T(\phi_{2})=0\right)
	\end{align*}
	
	Also, $\left[\phi_{1}\veebar\phi_{2}\right] \,=\,\left[\phi_{1}\right] \times\{q\}\cup\{p\}\times\left[\phi_{2}\right]$. So we have
	\begin{align*}
		T(\phi_{1}\veebar\phi_{2})\,=\,1  & \:\Longleftrightarrow\: \left(T(\phi_{1})=1\,\vee\,T(\phi_{2})=1\right) 
		\\
		T(\phi_{1}\veebar\phi_{2})\,=\,0  & \:\Longleftrightarrow\: \left(T(\phi_{1})=0\,\wedge\,T(\phi_{2})=0\right)
	\end{align*}
	
	If $T(\phi_{1}\rightharpoonup\phi_{2})=1$ and $T(\phi_{1})\neq0$, then $T(\phi_{1}\barwedge\phi_{2})=1$ and so $T(\phi_{2})=1$. Conversely, if
	$T(\phi_{2})=1$ and $T(\phi_{1})\neq0$, then $T(\phi_{1}\barwedge\phi_{2})=1$ and so $T(\phi_{1}\rightharpoonup\phi_{2})=1$. Thus
	\[
	T(\phi_{1}\rightharpoonup\phi_{2})=1\:\Longleftrightarrow\:\left(T(\phi_{1})=0\,\vee\,T(\phi_{2})=1\right)
	\]
	
	Finally, by definition \ref{DefEquivalenceMultiValueLogic}
	\[
	T(\phi_{1}\rightleftharpoons\phi_{2})=1\,\,\Longleftrightarrow\,\,\left(T(\phi_{1})=T(\phi_{2})=1\,\vee\,T(\phi_{1})=T(\phi_{2})=0\right)
	\]
	
	Since the above results are consistent with rules of propositional calculus, these operations in $\mathfrak{M}$\ are reduced to those of propositional logic. \bigskip
\end{proof}

Corollary \ref{28} and \ref{23} suggest that predicate bundles can generalize both propositional logic and probability spaces. Consequently, the theory of predicate bundles has potential to unify both formal logic and probability theory, a topic that we will discuss in more detail in other papers.\medskip

In Lukasiewicz logic \cite{Gottwald}, the negation rule $\left(\urcorner u:=1-u\right)$ is the same as corollary \ref{12}, but the implication rule $\left(u\rightarrow v:=\,\min\{1,1-u+v\}\right)$ is inconsistent with our model ((\ref{DegreeTruthImplication}) and (\ref{DegreeTruthImplicationSentence})). From corollary \ref{15} (and \ref{20}), we can see that $\min\{T(\phi_{1}), T(\phi_{2})\}$ and $\max\{T(\phi_{1}), T(\phi_{2})\}$ are only bounds of $T(\phi_{1}\barwedge\phi_{2})$ and $T(\phi_{1}\veebar\phi_{2})$ rather than their exact values. (The relation between $T(\phi_{1}\barwedge\phi_{2})$ and $T(\phi_{1}\veebar\phi_{2})$ is given in corollary \ref{25}.) As the result, the min-max rule $(u\wedge v:=\,\min\{u,v\}$, $u\vee v:=\,\max\{u,v\})$ in the logics of Post and Godel is also inconsistent with $\mathfrak{M}$. The conjunction rule $\left(u\wedge v:=\,uv\right)$ in product logic corresponds to the irrelevancy case in $\mathfrak{M}$ (lemma \ref{18}). \medskip

The rigorous solution to the sorites paradox (conclusion \ref{38}) also provides a confirmation of the implication formula in $\mathfrak{M}$ (\ref{DegreeTruthImplication}). Unlike formal logic, there are well-formed formulas that do not have truth degrees in $\mathfrak{M}$ (corollary \ref{21}). In addition, even some of the basic tautologies in propositional logic can fail in $\mathfrak{M}$ (remark \ref{33}).

\subsection{Set Bundles and Fuzzy Sets}\label{SectionSetBundleFuzzySet}

The notion of fuzzy sets was introduced by Zadeh in 1965 \cite{Zadeh}. Since then, fuzzy set theory has blossomed into a popular field widely used by researchers in areas such as linguistic modeling, soft computing, approximate reasoning, imprecision and uncertainty in artificial intelligence, and so on. Initially, a fuzzy set is introduced to extend the classical notion of a set by assigning each of its elements with a membership degree between 0 and 1. Later on, axiomatization and $t-$norm based systems are proposed in attempts to put fuzzy logic on a solid footing (\cite{Dubois-Prade} and \cite{Hajek}). \medskip

The problem with a fuzzy set is that its fuzzy membership lacks a rigorous definition and is often under subjectivity and speculation. However, since uncertainty problems tackled by fuzzy set theory are often important, a rigorous foundation for fuzzy sets is desired upon which these problems can be reinvestigated. We can see notions of predicate and set bundles provide the rigorous foundation for fuzzy sets because membership of a set bundle provides a precise definition for the fuzzy membership of a fuzzy set.\medskip

First, let's compare membership of a set bundle with the fuzzy membership of a fuzzy set. In \cite{Zadeh}, a fuzzy set is defined as an extension of a classical set by assigning each of its elements with a membership degree between 0 and 1 (fuzzy membership). The union and intersection of two fuzzy sets are defined as follows. Given two fuzzy sets $A$ and $B$ whose membership functions are $\mu_{A}(x)$ and $\mu_{B}(x)$. Then the membership functions for $A\cap B$ and $A\cup B$ are $\mu_{A\cap B}(x)=\min\{\mu_{A}(x),\,\mu_{B}(x)\}$ and $\mu_{A\cup B}(x)=\max\{\mu_{A}(x),\, \mu_{B}(x)\}$, which is known as the min-max rule for fuzzy sets. Later on, they are extended to $t-$norms as $\mu_{A\cap B}(x)=t_{1}(\mu_{A}(x),\,\mu_{B}(x))$ and $\mu_{A\cup B}(x)=t_{2}(\mu_{A}(x),\,\mu_{B}(x))$, where a $t-$norm is a function $T\colon\left[0,1\right] \times\left[0,1\right]  \rightarrow\left[0,1\right]$ that satisfies such properties as commutativity, monotonicity, associativity and identity. Common $t-$norms include Godel norm, drastic norm, product norm, and so on (\cite{Dubois-Prade} and \cite{Hajek}). The problem of a fuzzy set is that its fuzzy membership lacks a rigorous definition and is often assigned arbitrarily and speculatively. The min-max rule for fuzzy sets is also inaccurate and inconclusive. \medskip

On the other hand, the membership of $x$ in a set bundle $\left.\mathcal{G}\right\vert_{r}$ is rigorously defined (in definition \ref{DefMembershipSetbundle} and lemma \ref{29}) as all sets in the bundle containing $x$. As the result, the membership degree of $x$ can be precisely determined as the truth degree of \textquotedblleft$ x\sbin\mathcal{G}$\textquotedblright. This indicates that $\mathcal{G}$ can be viewed as a rigorous model of a fuzzy set where $T(x\sbin\mathcal{G})$ is the precise definition of fuzzy membership. Furthermore, we can see that the min-max rule for fuzzy sets is inaccurate because corollary \ref{31} shows that $\min\{T(x\sbin \mathcal{C}_{1})$, $T(x\sbin\mathcal{C}_{2})\}$ and $\max\{T(x\sbin\mathcal{C}_{1})$, $T(x\sbin\mathcal{C}_{2})\}$ are only bounds of $T(x\sbin\mathcal{C}_{1}\cap\mathcal{C}_{2})$ and $T(x\sbin\mathcal{C}_{1}\cup\mathcal{C}_{2})$ rather than their exact values. In addition, corollary \ref{49} shows that there are no $t-$norms in general to express $T(x\sbin\mathcal{C}_{1}\cap\mathcal{C}_{2})$ and $T(x\sbin\mathcal{C}_{1} \cup \mathcal{C}_{2})$ based on $T(x\sbin\mathcal{C}_{1})$ and $T(x\sbin\mathcal{C}_{2})$. This fact holds for any membership degree as long as it is defined in measure theory. The only $t-$norm known so far is the product $t-$norm as in corollary \ref{42}. \medskip

Since fuzzy set theory lacks a rigorous foundation, it overproduces many unnecessary, redundant, or even incorrect works. However, uncertainty issues tackled by fuzzy set theory are often important real-world problems that can be reinvestigated rigorously by predicate and set bundles. For example, fuzzy measures are unnecessary since the truth degree of a predicate in $\mathfrak{M}$ can be defined precisely as the weight measure of its central bundle. Fuzzy possibility theory is incorrect and can be replaced by the predicate bundle-based modal logic that provides a rigorous foundation for necessity and possibility (see section \ref{SectionModalLogic}). \medskip

Since this paper is the first paper on predicate and set bundles, we omit many topics of reinvestigation on fuzzy set theory such as fuzzy integration, fuzzy functions, and so on. We will leave these studies in other papers. 


\subsection{Predicate Bundles and Modal Logic}

The predicate bundle-based model of necessity and possibility is consistent with the Kripke system \textbf{K} (also known as the weakest normal modal logic) \cite{BlackburnP}. In fact, all results in \textbf{K}, including the necessitation rule and distribution axiom, can be proved based on predicate bundles and degree of truth (proposition \ref{30}). Our model can also prove the axiom $4$ (proposition \ref{30}(v)) which \textbf{K} can not. However, our model can not prove the reflexivity axiom (proposition \ref{53}(i)) which is consistent with \textbf{K}. In general, our model is a (significant) improvement over \textbf{K} in that it gives a precise possibility measure based on the notion of degree of truth and abandons the plausible many-world theory.\medskip

This paper is the first one on the theory of predicate bundles in which we only discuss the most important topics and omit many others. We will continue to investigate it in other papers.

\textit{E-mail}: e353zhan@uwaterloo.ca

\end{document}